\newtheorem{pro}{Proposition}
\newtheorem{defi}{Definition}[section]
\newtheorem{thm}{Theorem}[section]
\newtheorem{lem}{Lemma}[section]
\newtheorem{lemma}{Lemma}[section]
\date{}
\def\theequation{\@arabic{\c@section}.\@arabic{\c@equation}}
\def\N{{I\!\!N}}
\begin{document}
\title {Critical growth fractional elliptic systems with exponential nonlinearity}

\author{ {\sc J.Giacomoni}\footnote{LMAP (UMR CNRS 5142) Bat. IPRA,
  Avenue de l'Universit\'e
   F-64013 Pau, France. email:jacques.giacomoni@univ-pau.fr} , {\sc Pawan Kumar Mishra}\footnote{Department of Mathematics, Indian Institute of Technology Delhi,
Hauz Khaz, New Delhi-110016, India. e-mail: pawanmishra31284@gmail.com} \; and {\sc K.Sreenadh}\footnote{Department of Mathematics, Indian Institute of Technology Delhi,
Hauz Khaz, New Delhi-110016, India.
 e-mail: sreenadh@gmail.com}
}

\maketitle
%
%
%
\begin{abstract}
\noindent We study the existence  of positive solutions for  the system of fractional elliptic equations of the type,
\begin{equation*}
\begin{array}{rl}
(-\Delta)^{\frac{1}{2}} u &=\frac{p}{p+q}\lambda f(x)|u|^{p-2}u|v|^q + h_1(u,v) e^{u^2+v^2},\;\textrm{in}\; (-1, 1),\\
(-\Delta)^{\frac{1}{2}} v &=\frac{q}{p+q}\lambda f(x)|u|^p|v|^{q-2}v + h_2(u,v) e^{u^2+v^2},\;\textrm{in}\; (-1, 1),\\
 u,v&>0 \;\textrm{in } \; (-1,1),\\
   u&=v=0 \; \text{in} \; \mathbb R\setminus (-1,1).
 \end{array}
\end{equation*}
where {$1<p+q<2$}, $h_1(u,v)=(\alpha{+}2u^2)|u|^{\alpha-2}u|v|^\beta, h_2(u,v)=(\beta{+}2v^2) |u|^\alpha |v|^{\beta-2}v$ and ${\alpha+\beta>2}$.
Here $(-\Delta)^{\frac{1}{2}}$ is the fractional Laplacian operator. We show the existence of multiple solutions for suitable range of $\lambda$ by analyzing the fibering maps and the corresponding Nehari manifold. We also study the existence of positive solutions for a superlinear system with critical growth exponential nonlinearity.
 \end{abstract}
\section{Introduction}
 \setcounter{equation}{0}
\noindent We study the following system for existence and multiplicity of solutions
\begin{equation*}
(P_{\lambda})\;
\left\{
\begin{array}{rl}
(-\Delta)^{\frac{1}{2}} u &=\frac{p}{p+q}\lambda f(x)|u|^{p-2}u|v|^q + h_1(u,v) e^{u^2+v^2},\;\textrm{in}\; (-1, 1),\\
(-\Delta)^{\frac{1}{2}} v &=\frac{q}{p+q}\lambda f(x)|v|^{q-2}v|u|^p + h_2(u,v) e^{u^2+v^2},\;\textrm{in}\; (-1, 1),\\
 u,v&>0 \;\textrm{in } \; (-1,1),\\
   u&=v=0 \; \text{in} \; \mathbb R\setminus (-1,1).
 \end{array}
 \right.
\end{equation*}
where {$1<p+q<2$}, {$h_1(u,v)=(\alpha+2u^2)|u|^{\alpha-2}u|v|^\beta \;\textrm{and}\; h_2(u,v)=(\beta +2v^2) |u|^\alpha |v|^{\beta-2}v$, $\alpha+\beta>2$,} $\lambda>0$ and $f\in L^r(-1, 1)$, for suitable choice of {$r>1$}, is sign changing. {Here $(-\Delta)^{\frac{1}{2}}$ is the $\frac{1}{2}$-Laplacian operator defined as
\begin{equation*}
 (-\Delta)^{\frac{1}{2}}u=\int_{\mathbb{R}} \frac{(u(x+y)+u(x-y)-2u(x))}{|y|^{2}}dy\;\;\;\;\;\textrm{for all}\; \; x\in \mathbb{R}.
 \end{equation*}}
%
\noindent The fractional Laplacian  operator has been a classical topic in Fourier analysis and nonlinear partial differential equations.  Fractional operators are  involved in financial mathematics, where Levy processes with jumps appear in modeling the asset prices (see \cite{app}).
Recently the semilinear equations involving the fractional Laplacian has attracted many researchers. The critical exponent problems for fractional Laplacian have been studied in \cite{sv, tan}. Among the works dealing with fractional elliptic equations with critical exponents we cite also \cite{tfwc, XcT, mb,mb1} and references there-in, with no attempt to provide a complete list.

\noindent In the local setting, the semilinear ellitpic systems involving Laplace operator with exponential nonlinearity has been investigated in \cite{DMR, nhsr}. {The} case of polynomial nonlinearities involving linear and quasilinear operators has been studied in \cite{kadie, alvc,yb2,yb3,yb4,yb5,yb6}. Furthermore, these results for sign changing nonlinearities with polynomial type subcritical and critical growth have been obtained in \cite{yb1, bruw, uwcyh,psree,wutf} using Nehari manifold and fibering map analysis. These problems for exponential growth nonlinearties is studied in \cite {DMR}:
\begin{equation*}
\left\{
\begin{array}{rl}
-\Delta u &=g(v),\;
-\Delta v =f(u),\;\textrm{in}\; \Omega\\
   u&=v=0 \; \text{on} \; \partial \Omega.
 \end{array}
 \right.
\end{equation*}
where functions $f$ and $g$ have critical growth {in the sense of} {Trudinger}-Moser inequality and have shown the existence of a nontrivial weak solutions in both sub-critical as well as critical growth case. In \cite{nhsr}, authors have considered the elliptic system with exponential growth perturbed by a concave growth term and established the global multiplicity results with respect to the parameter.

\noindent Recently semilinear equations involving fractional Laplacian with exponential nonlinearities have been studied by many authors.  Among them we  cite \cite{ionzsq, lm, yu, jpsree} and the references therein. The system of equations with fractional Laplacian operator with polynomial subcritical and critical Sobolev exponent have been studied in \cite{wcsd, sqw}. Our aim in this article is to generalize the result in \cite{jpsree} for fractional elliptic systems. In \cite{sqw}, authors considered the problem
\begin{align*}
(-\Delta )^s u & = \lambda |u|^{q-2}u + \frac{2\alpha}{\alpha+\beta}|u|^{\alpha -2}u |v|^{\beta},\; \; (-\Delta )^s   = \lambda |v|^{q-2}v + \frac{2\alpha}{\alpha+\beta}|u|^{\alpha} |v|^{\beta-2}v \; in \; \Omega,\\
u&=v=0 \; on \; \partial \Omega.
\end{align*}
where $\Omega \subset \mathbb{R}^
N$ is a smooth bounded domain, $\lambda, � > 0, 1 < q < 2$ and $\alpha > 1, \beta > 1$ satisfy
$\alpha + \beta =  2N/(N - 2s), s \in (0, 1)$ and $N > 2s.$ They studied the associated Nehari manifold using the fibering maps and show the existence of non-negative solutions arising out of structure of manifold. Our results in section 3 extends these results to the exponential case.

\noindent The variational functional $J_{\lambda}$ associated to the problem$(P_{\lambda})$ is given as
\begin{equation*}
{J_{\lambda}(u,v)=\frac{1}{2}\int_{-1}^1\left(|(-\Delta)^{\frac{1}{4}}u|^2 +|(-\Delta)^{\frac{1}{4}}v|^2\right)dx -\frac{\lambda}{p+q}\int_{-1}^1f(x)|u|^p|v|^qdx-\int_{-1}^1 G(u,v)dx.}
\end{equation*}
\begin{defi}
 $(u,v)\in H^{s}_{0}(-1,1)\times H^{s}_{0}(-1,1)$  is called weak solution of $(P_\lambda)$ if
\begin{align*}
\int_{-1}^{1} \left((-\Delta)^{\frac{1}{4}} u (-\Delta)^{\frac{1}{4}}\phi + (-\Delta)^{\frac{1}{4}} v (-\Delta)^{\frac{1}{4}}\psi\right)dx &= \lambda \int_{-1}^{1}\left( |u|^{p-2}|v|^{q}u\phi+|v|^{q-2}|u|^{p}v \psi\right) dx\\
&\quad + \int_{-1}^{1}\left(h_1(u,v)\phi +h_2(u,v)\psi\right) e^{u^2+v^2}dx
\end{align*}
for all $(\phi,\psi)\in H^{s}_{0}(-1,1)\times H^{s}_{0}(-1,1).$
\end{defi}
\noindent In the beautiful work \cite{CS}, Caffarelli and Silvestre used Dirichlet-Neumann maps to transform the non-local equations involving fractional Laplacian into a local problem. This approach attracted lot of interest by many authors recently to address the existence and multiplicity of solutions using variational methods.
In \cite{CS}, it was shown that for any $v\in H^{\frac{1}{2}}(\mathbb{R}), $ the unique function $w(x,y)$ that minimizes the weighted integral
\[\mathcal{E}_{\frac{1}{2}}(w)=\int_{0}^{\infty}\int_{\mathbb{R}} |\nabla w(x,y)|^2 dx dy\]
over the set $\left\{w(x,y): \mathcal{E}_{\frac{1}{2}}(w)<\infty, w(x,0)=v(x)\right\}$ satisfies
{$\int_{\mathbb{R}}|(-\Delta)^{\frac{1}{4}}v|^2 = \mathcal{E}_{\frac{1}{2}}(w).$} Moreover $w(x,y)$ solves the boundary value problem
\begin{equation*}
-\text{div}(\nabla w)=0 \; \text{in}\; \mathbb{R}\times \mathbb{R}_+,\;\; w(x,0)=v(x)\quad
\frac{\partial w}{\partial \nu}=(-\Delta)^{1/2} v(x)  \end{equation*}
where $\frac{\partial w}{\partial \nu}=\displaystyle\lim_{y\rightarrow 0^+}\frac{\partial w}{\partial y}(x, y)$. In case of bounded domains, in \cite{XcT}, it was observed that the harmonic extension problem  is
{$$ \quad (P_E)\; \left\{
\begin{array}{rrll}
 \quad  -\Delta w &= &0, \;w>0\; \; \text{in}\;\mathcal{C}=(-1,1)\times (0,\infty),\\
  w &=&0 \; \text{on}\; \{-1,1\}\times (0,\infty),\\
  \frac{\partial w}{\partial \nu}&=&(-\Delta)^{1/2} v \; \text{on} \; (-1,1)\times \{0\}.\\
\end{array}
\right.
$$}
This can be solved on the space $H^{1}_{0,L}(\mathcal{C})$, which is defined as
\[ H^{1}_{0,L}(\mathcal{C})=\{v\in H^1(\mathcal{C}): v=0 \; \text{a.e. in }\; \{-1,1\}\times (0,\infty)\}\]
equipped with the norm $\displaystyle \|w\|= \left(\int_{\mathcal{C}} |\nabla w|^2 dx dy \right)^{\frac{1}{2}}.$
The space $H^{\frac{1}{2}}(\mathbb R)$ is the Hilbert space with the norm defined as
\begin{equation*}
\|u\|_{H}^{2}= \|u\|_{L^2({\mathbb R})}+\int_{\mathbb R}|(-\Delta)^{\frac{1}{4}}u|^2dx.
\end{equation*}
The space $H_0^{\frac{1}{2}}({\mathbb R})$ is the completion of $C_0^{\infty}({\mathbb R})$ under
$[u]=\left(\int_\mathbb R|(-\Delta)^\frac{1}{4}u|^2dx\right)^{\frac{1}{2}}$. In case of bounded intervals, say $I=(-1,1)$, the function spaces are defined as
\[X=\{ u\in H^{\frac{1}{2}}(\mathbb R): \; u=0 \; \text{in}\; \mathbb R\backslash (-1,1)\}\]
equipped with the norm
\begin{equation*}
\displaystyle \|u\|_X=[u]_{H^{1/2}(\mathbb R)}={\sqrt{2\pi}} \|(-\Delta)^{1/4}u\|_{L^2(\mathbb R)}.
\end{equation*}

\noindent Using the above idea, in the case of systems we have the following extension problem
 {\begin{equation*}
(P_E)_{\lambda}\;
\left\{\begin{array}{rl}
 -div(\nabla w_1)=0,&-div(\nabla w_2)=0 \quad\textrm{in}\; \mathcal{C}=(-1,1)\times (0,\infty),\\
 w_1,w_2&>0 \quad\quad\quad\quad\quad\quad\textrm{in}\; \mathcal{C}=(-1,1)\times (0,\infty),\\
 w_1=w_2 &=0 \; \text{on}\; \{-1,1\}\times (0,\infty),\\
  \frac{\partial w_1}{\partial \nu}&= \frac{p}{p+q}\lambda f(x)|u|^{p-2}u|v|^q+ h_1(u,v)e^{u^2+v^2}\;\; \textrm{on}\;\; (-1,1)\times \{0\},\\
  \frac{\partial w_2}{\partial \nu}&= \frac{q}{p+q}\lambda f(x)|v|^{q-2}v|u|^p+ h_2(u,v)e^{u^2+v^2}\;\; \textrm{on}\;\; (-1,1)\times \{0\}
\end{array}
\right.
\end{equation*}}
where $\frac{\partial w_1}{\partial \nu}=\displaystyle\lim_{y\rightarrow 0^+}\frac{\partial w_1}{\partial y}(x, y)$ and $\frac{\partial w_2}{\partial \nu}=\displaystyle\lim_{y\rightarrow 0^+}\frac{\partial w_2}{\partial y}(x, y)$.\\
The problem $(P_E)_{\lambda}$ can be solved on the space {$\mathcal W^{1}_{0,L}(\mathcal{C})$, which is defined as
\[ \mathcal W^{1}_{0,L}(\mathcal{C})=\{(w_1,w_2)\in H^1_{0,L}(\mathcal{C})\times H^1_{0,L}(\mathcal{C}) : w_1=w_2=0 \; \text{a.e. in }\; \{-1,1\}\times (0,\infty)\}\]}
equipped with the norm
\[\|w\|= \left(\int_{\mathcal{C}} |\nabla w_1|^2 dx dy+\int_{\mathcal{C}} |\nabla w_2|^2 dx dy \right)^{\frac{1}{2}}.\]
\noindent The Moser-Trudinger trace inequality for $\mathcal W^{1}_{0,L}(\mathcal{C})$ follows from Martinazzi \cite{lm} which improves the former results of Ozawa \cite{ozaw} and Kozono, Sato \& Wadade \cite{ksw} and uses Lemma 1.1 in Megrez, Sreenadh \& Khalidi \cite{nhsr} which adapts the Trudinger-Moser Inequality for systems:
\begin{thm}\label{mtsys}
For any $\alpha \in (0, \pi]$ there exists $C_\alpha>0$  such that
\begin{equation}\label{eq12}
\displaystyle \sup_{\|w\|\leq1} \int_{-1}^1 e^{\alpha (w_1(x, 0)^2+w_2(x,0)^2)}dx\leq C_\alpha.
\end{equation}
\end{thm}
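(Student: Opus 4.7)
The plan is to reduce the system version to the scalar Moser--Trudinger trace inequality of Martinazzi \cite{lm}, which states that for every $\alpha\in(0,\pi]$,
\begin{equation*}
\sup_{\|W\|_{H^1_{0,L}(\mathcal{C})}\leq 1}\int_{-1}^{1}e^{\alpha W(x,0)^2}\,dx\leq C_\alpha .
\end{equation*}
Given $(w_1,w_2)\in\mathcal W^{1}_{0,L}(\mathcal{C})$ with $\|w\|\leq 1$, the idea is to pass to the single function
\begin{equation*}
W(x,y):=\sqrt{w_1(x,y)^2+w_2(x,y)^2},
\end{equation*}
since at the trace $y=0$ one has the pointwise identity $W(x,0)^2=w_1(x,0)^2+w_2(x,0)^2$, so that the desired integral equals $\int_{-1}^{1}e^{\alpha W(x,0)^2}\,dx$, to which the scalar inequality can be applied.

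The first key step is to show $W\in H^{1}_{0,L}(\mathcal{C})$ with $\|W\|\leq\|w\|$. Formally, differentiating $W^2=w_1^2+w_2^2$ yields $W\nabla W=w_1\nabla w_1+w_2\nabla w_2$, and Cauchy--Schwarz gives
\begin{equation*}
W|\nabla W|\leq\sqrt{w_1^2+w_2^2}\,\sqrt{|\nabla w_1|^2+|\nabla w_2|^2}=W\sqrt{|\nabla w_1|^2+|\nabla w_2|^2},
\end{equation*}
hence $|\nabla W|^2\leq|\nabla w_1|^2+|\nabla w_2|^2$ a.e., and integrating over $\mathcal{C}$ gives $\|W\|^2\leq\|w_1\|^2+\|w_2\|^2\leq 1$. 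The vanishing of $W$ on $\{-1,1\}\times(0,\infty)$ follows immediately from the corresponding property of $w_1$ and $w_2$.

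The main obstacle is making the chain rule above rigorous at the exceptional set $\{w_1=w_2=0\}$, where $W$ may fail to be smooth. This is precisely the content of Lemma 1.1 in Megrez--Sreenadh--Khalidi \cite{nhsr}, which handles the issue via the standard Stampacchia-type regularization $W_\varepsilon:=\sqrt{w_1^2+w_2^2+\varepsilon^2}-\varepsilon$: one checks directly that $W_\varepsilon\in H^{1}_{0,L}(\mathcal{C})$ with $|\nabla W_\varepsilon|^2\leq|\nabla w_1|^2+|\nabla w_2|^2$ a.e., and then lets $\varepsilon\to 0^+$. I would invoke this lemma to legitimize the estimate $\|W\|\leq 1$.

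With $W\in H^{1}_{0,L}(\mathcal{C})$ and $\|W\|\leq 1$ in hand, the scalar Moser--Trudinger trace inequality from \cite{lm} applied to $W$ yields
\begin{equation*}
\int_{-1}^{1}e^{\alpha(w_1(x,0)^2+w_2(x,0)^2)}\,dx=\int_{-1}^{1}e^{\alpha W(x,0)^2}\,dx\leq C_\alpha
\end{equation*}
for all $\alpha\in(0,\pi]$, and taking the supremum over the unit ball of $\mathcal W^{1}_{0,L}(\mathcal{C})$ gives \eqref{eq12}, completing the proof.
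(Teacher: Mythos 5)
Your proposal is correct and follows essentially the same route as the paper, which does not write out a proof but explicitly attributes the result to exactly the two ingredients you use: the scalar Moser--Trudinger trace inequality of Martinazzi \cite{lm} combined with Lemma 1.1 of Megrez--Sreenadh--Khalidi \cite{nhsr}, i.e.\ the reduction to the single function $W=\sqrt{w_1^2+w_2^2}$ via the pointwise bound $|\nabla W|^2\leq|\nabla w_1|^2+|\nabla w_2|^2$. Your write-up in fact supplies the details (the Cauchy--Schwarz step and the $\varepsilon$-regularization) that the paper leaves to the citations.
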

\noindent The variational functional $I_\lambda:\mathcal W^1_{0, L}(\mathcal C) \rightarrow \mathbb R$ associated to $(P_E)_{\lambda}$ is defined as,
\[I_\lambda(w)=\frac{1}{2}\int_\mathcal C \left(|\nabla w_1|^2  + |\nabla w_2|^2\right) dxdy
 -\frac{\lambda}{p+q} \int_{-1}^1 (f(x)|w_1(x, 0)|^p|w_2(x,0)|^q - G(w(x,0)))dx
\]
{where $w=(w_1,w_2)$}.
Any $w=(w_1,w_2)\in \mathcal W^1_{0, L}(\mathcal C)$ is called the weak solution of the problem $(P_E)_{\lambda}$ if for any $\phi=(\phi_1,\phi_2) \in \mathcal W^1_{0, L}(\mathcal C)$
\begin{align*}
\int_\mathcal C &\left(\nabla w_1 \nabla \phi_1d +\nabla w_2\nabla \phi_2 \right)dxdy =\lambda \frac{p}{p+q}\int_{-1}^1f(x)|w_1(x, 0)|^{p-2}w_1(x, 0)|w_2(x,0)|^q\phi_1(x, 0)dx\\
&+\lambda \frac{q}{p+q}\int_{-1}^1f(x)|w_1(x,0)|^p|w_2(x, 0)|^{q-2}w_2(x, 0)\phi_2(x, 0) dx
\\&+\int_{-1}^1 g_1(w_1(x, 0),w_2(x,0))\phi_1(x, 0)dx +\int_{-1}^1g_2(w_1(x, 0),w_2(x,0))\phi_2(x, 0) dx.
\end{align*}
It is clear that critical points of $I_\lambda$ in $\mathcal W^1_{0, L}(\mathcal C)$ corresponds to the critical points of $J_\lambda$ in $X\times X$.
So as noted in the introduction, we will look for solutions  $w=(w_1,w_2)$ of $(P_E)_{\lambda}$. \\

\noindent In this paper we establish the multiplicity results for fractional systems with critical exponential nonlinearities. In section 2 we derive some {delicate technical} estimates to study the structure of Nehari manifold using the associate fibering maps. In section 3 we show the existence of two solutions that arise out of minimizing the functional over the  Nehari manifold. Here we show the existence of second solution using mountain-pass arguments over positive cones and compactness analysis of Palais-Smale sequences. We show the following multiplicity result.
\begin{thm}\label{mht1}
There exists a $\Lambda_0>0$ such that  $(P_{\lambda})$ has at least two solutions for every $\lambda \in (0, \Lambda)$.
\end{thm}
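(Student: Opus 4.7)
The plan is to produce the two solutions by a standard concave--convex decomposition on the Nehari manifold associated to $I_\lambda$, and then a mountain--pass argument on the positive cone of $\mathcal{W}^1_{0,L}(\mathcal{C})$. Define
\[
\mathcal{N}_\lambda=\{w\in \mathcal{W}^1_{0,L}(\mathcal{C})\setminus\{0\}:\langle I_\lambda'(w),w\rangle=0\}
\]
and the fibering map $\phi_w(t)=I_\lambda(tw)$ for $w$ with $w_1,w_2\geq 0$. Because $1<p+q<2$ while $h_1,h_2$ carry an exponential superlinear weight, $\phi_w$ has the typical concave--convex shape: it is negative near $0$, attains a local negative minimum, rises to a positive local maximum, and then decreases to $-\infty$. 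The standard split $\mathcal{N}_\lambda=\mathcal{N}_\lambda^+\cup \mathcal{N}_\lambda^0\cup\mathcal{N}_\lambda^-$ according to the sign of $\phi_w''(1)$ will organize the argument; using the technical fiber-map estimates announced for Section 2, I would show the existence of $\Lambda_0>0$ so that $\mathcal{N}_\lambda^0=\emptyset$ and hence $\mathcal{N}_\lambda^\pm$ are smooth manifolds for every $\lambda\in(0,\Lambda_0)$.

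For the first solution I would minimize $I_\lambda$ on $\mathcal{N}_\lambda^+$. The concave term $\lambda f|u|^p|v|^q$ dominates near $0$, so $\inf_{\mathcal{N}_\lambda^+}I_\lambda<0$, while $\mathcal{N}_\lambda^+$ is bounded (from the fiber-map analysis) and weakly closed in the relevant sense. A minimizing sequence $\{w^n\}$ is bounded, so up to a subsequence $w^n\rightharpoonup w^0$ in $\mathcal{W}^1_{0,L}(\mathcal{C})$ and, by the compactness of the trace embedding $\mathcal{W}^1_{0,L}(\mathcal{C})\hookrightarrow L^q(-1,1)\times L^q(-1,1)$ for every $q<\infty$ (subcritical case of Theorem 1.1 applied on a bounded interval), the concave term passes to the limit. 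The negative energy level $<0$ prevents concentration in the exponential term by Theorem 1.1 (any mass lost would push the limit energy above $0$), so $w^0\in\mathcal{N}_\lambda^+$ is a minimizer. Standard Lagrange-multiplier arguments plus truncation to the positive cone yield a positive weak solution $w^0$ of $(P_E)_\lambda$.

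For the second solution I would work on the positive cone and set up a mountain--pass geometry for $I_\lambda$ based at $w^0$: the first-solution energy $I_\lambda(w^0)<0$ is the base, while Theorem 1.1 together with the power structure of $h_1,h_2$ yields a uniform positive lower bound on $I_\lambda$ along the sphere $\|w-w^0\|=\rho$ for small $\rho$, and $I_\lambda(tw)\to -\infty$ as $t\to\infty$ for any fixed $w$ with $w_1,w_2>0$ gives the far endpoint. This produces a Palais--Smale sequence at a level $c_\lambda$ strictly above $I_\lambda(w^0)$. The main obstacle, and the real content of the section, will be the \emph{compactness analysis}: because the nonlinearity is of critical exponential growth, the $(PS)_{c_\lambda}$ condition only holds below a sharp threshold dictated by \eqref{eq12}. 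Concretely, I would prove that if $c_\lambda<\frac{\pi}{2}-\varepsilon$ (or the analogous constant coming from the constant $\alpha=\pi$ in Theorem 1.1, suitably halved for the product structure of $h_1,h_2$) then any $(PS)_{c_\lambda}$ sequence is bounded, its weak limit is a weak solution, and no exponential concentration occurs; this uses a Lions-type vanishing lemma for the Moser--Trudinger functional together with the elementary inequality $e^{u^2+v^2}-1\le \big(e^{r(u^2+v^2)}-1\big)^{1/r}$ applied with a refined exponent.

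Finally, I would upper-estimate $c_\lambda$ by testing along the Moser--Trudinger extremal family $w_\varepsilon=(z_\varepsilon,z_\varepsilon)$ (harmonic extension of the standard truncated fractional bubble) and shrinking $\Lambda_0$ if necessary so that the maximum of $I_\lambda(tw_\varepsilon)$ along the ray stays below the compactness threshold. The negativity of $I_\lambda(w^0)$ gives room in the mountain--pass level, so the sharp threshold can be beaten for $\lambda$ small. Combining the two steps yields a second nontrivial critical point $w^1\neq w^0$, and positivity follows from the maximum principle applied to the extension, completing the proof of Theorem \ref{mht1}.
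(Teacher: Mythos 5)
Your overall strategy matches the paper's: Nehari decomposition with fibering maps for a first solution on $\mathcal{N}_\lambda^+$, then a mountain-pass argument with a Moser--Trudinger threshold $\tfrac{\pi}{2}$, Moser functions, and a Lions-type higher-integrability lemma for the second solution. Two steps in your plan, however, have genuine gaps. First, you assume a strict mountain-pass geometry at the first solution, namely a \emph{uniform positive} lower bound for $I_\lambda$ on the sphere $\|w-w^0\|=\rho$. This is exactly what may fail: $w^0$ is only a local minimizer, possibly degenerate, so one can only guarantee $\inf\{I_\lambda(z):\|z-w^0\|=R\}\geq I_\lambda(w^0)$, with equality possible for all small $R$. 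The paper handles this by working with Palais--Smale sequences localized around a closed set $\mathcal{F}$ (taking $\mathcal{F}$ to be the translated positive cone $T$ when $\rho_0>I_\lambda(\tilde w)$, and $T$ intersected with a small sphere in the degenerate case $\rho_0=I_\lambda(\tilde w)$); without some such device your min-max level could simply equal $I_\lambda(w^0)$ and return the first solution. Related to this, you never explain how to exclude that the weak limit of the mountain-pass $(PS)$ sequence coincides with $w^0$; in the paper this is the content of the two-case analysis in the final theorem and uses both the localization around $\mathcal{F}$ and the Lions lemma.

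Second, your level estimate is argued the wrong way around. You propose to get $c_\lambda$ below the threshold by ``shrinking $\Lambda_0$'' and by invoking the negativity of $I_\lambda(w^0)$. The relevant bound is the \emph{relative} one $\rho_0< I_\lambda(\tilde w)+\tfrac{\pi}{2}$, so the negativity of $I_\lambda(\tilde w)$ buys nothing, and smallness of $\lambda$ does not control $\sup_{t,s}I_\lambda(\tilde w_1+t\bar\psi_k,\tilde w_2+s\bar\psi_k)$, which is governed by the competition between the quadratic term and the exponential nonlinearity along the Moser sequence. The paper's proof is a contradiction argument: assuming $\rho_0\geq I_\lambda(\tilde w)+\tfrac{\pi}{2}$ forces $t_k^2+s_k^2\geq 2\pi(1-O(1/\log k))$, and then the superexponential growth of $sh_1+th_2$ (coming from the factors $\alpha+2u^2$, $\beta+2v^2$ with $\alpha+\beta>2$) evaluated on the Moser functions forces $t_k^2+s_k^2\to\infty$, a contradiction. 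This growth condition at infinity, not the size of $\lambda$, is what beats the threshold, and your sketch does not use it. Also note that your description of the fibering map (``negative near $0$'') only covers the case $\int_{-1}^1 f|w_1|^p|w_2|^q\,dx>0$; since $f$ changes sign the case $\leq 0$ must be treated separately (it produces only a point of $\mathcal{N}_\lambda^-$), as the paper does.
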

\noindent {Similarly we can deal with the superlinear case, i.e.}
\begin{equation*}
(P)\left\{
\begin{array}{rl}
\quad \; (-\Delta)^{1/2}u &= h_1(u,v),\;\textrm{in}\; (-1,1),\\
(-\Delta)^{1/2}v &= h_2(u,v),\;\textrm{in}\; (-1,1),\\
u,v&>0  \; \text{in} (-1,1),\\
u=v&=0 \; \text{in} \;\mathbb{R}\backslash (-1,1)
\end{array}
 \right.
\end{equation*}
where $h_1(u,v)$ and $h_2(u,v)$ satisfy the following conditions:
\begin{enumerate}
\item[(h1)] $h_1,h_2\in C^1(\mathbb R^2), h_1(s,t)=h_2(s,t)=0 $ for $s\le 0, t\le 0$, $h_1(s,t)=\frac{\partial H}{\partial s}(s,t)$, $h_2(s,t)=\frac{\partial H}{\partial t}(s,t)>0$ for $s>0, t>0$  and for any $\epsilon>0$,
$\displaystyle\lim_{(s,t)\to\infty}h_1(s,t)e^{-(1+\epsilon)(s^2+t^2)}=0$ and $\displaystyle\lim_{(s,t)\to\infty}h_2(s,t)e^{-(1+\epsilon)(s^2+t^2)}=0.$
\item[(h2)] There exists $\mu>2$ such that for all $s,t>0$,
\[0\le \mu H(s,t) \le s h_1(s,t)+t h_2(s,t).\]
\item[(h3)] There exist positive  constants $s_0>0, t_0>0$ and $M_1>0, M_2>0$ such that\\
$ H(s,t)\le M_1 h_1(s,t)+ M_2 h_2(s,t)\;\mbox{for all}\; s>s_0$ and $t>t_0$.
\item[(h4)] $\displaystyle \lim_{s,t\rightarrow \infty} \left( s h_1(s,t) + t h_2(s,t) \right)e^{-(s^2+t^2)}=\infty.$
\item[(h5)]$\displaystyle \limsup_{s,t \rightarrow 0}\frac{2 H(s,t)}{s^2+t^2}<\lambda_1,$
where $\displaystyle {\lambda_1=2\displaystyle \min_{w\in \mathcal H^1_{0, L}(\mathcal C)}\left\{\int_C|\nabla w_1|^2 \; ; \;\int_{-1}^1|w_1(x,0)|^2dx=1\right\}.}$
\end{enumerate}
An example of a function satisfying above assumptions is
{$$
H(s,t)= \begin{cases}
(s^{4}+t^4)e^{s^{2}+t^{2}} & \text{if } s>0, \; t>0,\\
 0 & \text{otherwise}.
 \end{cases}
$$}
The variational functional associated to the problem $(P)$ is given as
\begin{equation*}
J(u,v)=\frac{1}{2}\int_{-1}^1|(-\Delta)^{\frac{1}{4}}u|^2dx+\frac{1}{2}\int_{-1}^1|(-\Delta)^{\frac{1}{4}}v|^2dx -\int_{-1}^1 H(u,v) dx.
\end{equation*}
Following very closely the variational approach in Subsection \ref{3.1}, we {prove in Section 4}:
\begin{thm}\label{mht0}
Suppose $(h1)-(h5)$ are satisfied. Then the problem $(P)$ has a solution.
\end{thm}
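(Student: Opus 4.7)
The strategy is to obtain a solution as a mountain-pass critical point of the extended energy $I:\mathcal W^1_{0,L}(\mathcal C)\to\mathbb R$ given by
\[
I(w)=\frac12\int_{\mathcal C}\bigl(|\nabla w_1|^2+|\nabla w_2|^2\bigr)\,dxdy-\int_{-1}^{1} H(w_1(x,0),w_2(x,0))\,dx.
\]
Positivity will come for free from $(h1)$ and the maximum principle: since $h_i$ vanishes whenever either argument is nonpositive, testing the weak equation with $w_i^-$ forces $w_i\ge 0$, and strict positivity follows from the Hopf boundary lemma for the extension as in \cite{CS,XcT}.

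First I would verify the mountain-pass geometry. From $(h5)$ combined with the subcritical control in $(h1)$, for any $\delta,\epsilon>0$ one has a pointwise bound of the form
\[
H(s,t)\le \tfrac{\lambda_1-\delta}{2}(s^2+t^2)+C(s^2+t^2)^{r}e^{(1+\epsilon)(s^2+t^2)}
\]
for some $r>1$ and every $s,t\in\mathbb R$. Coupling the Poincar\'e-type inequality associated with $\lambda_1$ with H\"older's inequality and Theorem \ref{mtsys} makes the exponential term $o(\|w\|^2)$ as $\|w\|\to 0$, so there exist $\rho,\eta>0$ with $I(w)\ge\eta$ on $\{\|w\|=\rho\}$. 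On the other hand, integrating $(h2)$ gives $H(s,t)\ge C_1(s+t)^\mu-C_2$ in the positive quadrant, so choosing any $w^0\in\mathcal W^1_{0,L}(\mathcal C)$ with $w^0_i(\cdot,0)\ge0$ and positive on a set of positive measure yields $I(tw^0)\to-\infty$ as $t\to\infty$. The mountain-pass theorem then produces a Palais--Smale sequence $\{w^n\}$ at level
\[
c=\inf_{\gamma\in\Gamma}\max_{t\in[0,1]}I(\gamma(t))\;>\;0,
\]
and the standard Ambrosetti--Rabinowitz computation using $(h2)$ gives $\|w^n\|^2\le C(1+c)$, hence $\{w^n\}$ is bounded.

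The principal obstacle is to locate $c$ strictly below the Trudinger--Moser compactness threshold $\pi/2$ inherited from Theorem \ref{mtsys} with $\alpha=\pi$. The plan is to test $I$ along a path $t\mapsto t(m_k,m_k)$ built from a Moser-type sequence $m_k$ of unit norm concentrating at an interior point of $(-1,1)$; condition $(h4)$ forces the exponential superlinearity of $H$ to dominate sharply enough that the supremum of $I$ along this path is strictly below $\pi/2$ once $k$ is large. This quantitative room is exactly what the compactness argument needs.

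Given $\{w^n\}$ bounded and $c<\pi/2$, extract a weak limit $w^n\rightharpoonup w$ in $\mathcal W^1_{0,L}(\mathcal C)$, with traces converging a.e.\ and in every $L^r(-1,1)$. A Brezis--Lieb-type splitting $\|w^n\|^2=\|w\|^2+\|w^n-w\|^2+o(1)$ together with the level gap keeps the effective exponential mass below $\pi$; combined with the subcritical control in $(h1)$ and Theorem \ref{mtsys}, this yields a uniform $L^q(-1,1)$ bound ($q>1$) on $h_i(w^n(\cdot,0))$ and hence equi-integrability, so one may pass to the limit in the weak formulation and identify $w$ as a weak solution of $(P_E)$. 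Using $(h2)$ and $(h3)$ to rule out the Lions-type concentration alternative then upgrades weak to strong convergence, giving $I(w)=c>0$ and $w\neq 0$. Together with the positivity discussion above, this produces the desired positive solution $(u,v)=(w_1(\cdot,0),w_2(\cdot,0))$ of $(P)$.
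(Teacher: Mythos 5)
Your overall architecture coincides with the paper's: extend to the half-cylinder, verify the mountain-pass geometry from $(h1)$, $(h5)$ and the superquadratic growth of $H$, get boundedness of Palais--Smale sequences from the Ambrosetti--Rabinowitz condition $(h2)$, push the mountain-pass level strictly below $\pi/2$ by testing along a Moser sequence concentrated at a trace point and invoking $(h4)$, and pass to the weak limit. Up to that point your sketch is sound and matches Section 4 of the paper (the paper derives the superquadratic lower bound on $H$ from $(h4)$ rather than by integrating $(h2)$, but either works).

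The genuine gap is in your last step. You claim that a Brezis--Lieb splitting plus $(h2)$--$(h3)$ rules out the Lions concentration alternative, upgrades weak to strong convergence, and yields $I(w)=c>0$, from which nontriviality follows. For critical Trudinger--Moser nonlinearities this is not available: the uniform $L^q$ bound ($q>1$) on $h_i(w^n(\cdot,0))$ that you want requires $\sup_n\|w^n\|^2<\pi$, and a priori $\|w^n\|^2\to 2\bigl(c+\int H(w)\bigr)$ may well exceed $\pi$ when $w\neq 0$; so neither the improved integrability nor strong convergence can be asserted in general, and the identity $I(w)=c$ is exactly what fails in this class of problems. Passing to the limit in the weak formulation must instead rest on mere $L^1$-convergence of $h_i(w^n(\cdot,0))$, which is what $(h3)$ together with the uniform bound on $\int h_i(w^n)w^n_i$ delivers (the de Figueiredo--Miyagaki--Ruf / Vitali compactness lemma, Lemma 4.3 of the paper). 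Nontriviality is then obtained the other way around: \emph{assume} $w\equiv 0$; then $\int H(w^n)\to 0$, so $\tfrac12\|w^n\|^2\to c<\pi/2$, hence $\sup_n\|w^n\|^2\leq\pi-\epsilon$, the Moser--Trudinger inequality gives the $L^q$ bound you wanted, the terms $\int h_i(w^n)w^n_i$ tend to $0$, and $\langle I'(w^n),w^n\rangle\to 0$ forces $\|w^n\|\to 0$, contradicting $c\geq\eta>0$. So the $c<\pi/2$ estimate is the engine of the nontriviality argument, not of a strong-convergence argument; with that reordering your proof closes, but as written the final step would not go through.
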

\section{Nehari manifold and fibering maps}
\noindent {In this section}, we consider the Nehari manifold associated to the problem $(P_E)_{\lambda}$ as
\begin{equation*}
\mathcal{N}_{\lambda}=\{w\in \mathcal W^1_{0, L}(\mathcal C)\;|\; \langle I_{\lambda}^{\prime}(w),w\rangle=0\}.
\end{equation*}
Thus $w=(w_1,w_2)\in \mathcal{N}_{\lambda}$ if and only if
\begin{align}\label{nlam}\nonumber
\|w\|^2=\lambda\int_{-1}^1 f(x)|w_1(x, 0)|^{p}|w_2(x,0)|^q dx
&+\int_{-1}^1 g_1(w_1(x, 0),w_2(x,0))w_1(x, 0)dx\\&+\int_{-1}^1 g_2(w_1(x, 0),w_2(x,0))w_2(x, 0)dx.
\end{align}
Now for every $w=(w_1,w_2)\in \mathcal W^1_{0, L}(\mathcal C)$, we define {the} fiber map $\Phi_w: \mathbb{R}^+\rightarrow \mathbb{R}$ as $\Phi_w(t)=I_\lambda(tw)$. Thus $tw\in \mathcal{N}_\lambda$ if and only if
\begin{align*}
\Phi_w^{\prime}(t)=t\|w\|^2 &-\lambda t^{p+q-1}\int_{-1}^1f(x)|w_1(x, 0)|^p|w_2(x,0)|^q dx\\
&-\int_{-1}^1 g_1(tw(x, 0))w_1(x, 0) dx
-\int_{-1}^1 g_2(tw(x, 0))w_2(x, 0) dx=0.
\end{align*}
In particular, $w\in \mathcal{N}_{\lambda}$ if and only if
\begin{align*}
\|w\|^2 -\lambda \int_{-1}^1f(x)|w_1(x, 0)|^p|w_2(x,0)|^q dx
&-\int_{-1}^1 g_1(w(x, 0))w_1(x, 0) dx\\
&-\int_{-1}^1 g_2(w(x, 0))w_2(x, 0) dx=0.
\end{align*}
Also

\begin{align}\nonumber
&\Phi^{\prime\prime}(w)(1)=\|w\|^2 -(p+q-1)\lambda\int_{-1}^1f(x)|w_1(x, 0)|^p|w_2(x,0)|^q dx \\\label{fi2}
&-\int_{-1}^1 \left\{(\alpha+\beta+2w_1^2+2w_2^2)(\alpha+\beta-1+2w_1^2+2w_2^2)+4(w_1^2+w_2^2)\right\}|w_1|^\alpha|w_2|^\beta e^{w_1^2+w_2^2} dx.
\end{align}
We split $\mathcal{N}_{\lambda}$ into three parts as
\begin{equation*}
\mathcal{N}_{\lambda}^{\pm}=\{w\in \mathcal{N}_{\lambda}\;|\;\Phi^{\prime\prime}(w)(1)\gtrless 0\} \; \text{and}\; \mathcal{N}_{\lambda}^0=\{w\in \mathcal{N}_{\lambda}\;|\;\Phi^{\prime\prime}(w)(1)=0\}.\\
\end{equation*}

In order to prove Theorem \ref{mht1}, we need following version of Lemma 3.1 of \cite{psree}.
\begin{lem}\label{gmal} Let $\Gamma\subset \mathcal W^1_{0,L}(\mathcal C)$ such that for any $w\in \Gamma$,\\
$\|w\|^2\leq \frac{1}{2-p-q}\int_{-1}^1 (\alpha+\beta-{(p+q)}+2w_1^2+2w_2^2)(\alpha+\beta+2+2w_1^2+2w_2^2)|w_1|^\alpha|w_2|^\beta e^{w^2_1+w^2_2}dx$. Then there exists a positive $\Lambda_0$ such that
\begin{align}\label{gma}\nonumber
\Gamma_0:=\displaystyle \inf_{w\in \Gamma\setminus\{0\}}\left\{ \displaystyle \int_{-1}^1 (\alpha +\beta-{2}+2w_1^2+2w_2^2)(\alpha+\beta+2w_1^2+2w_2^2)|w_1|^\alpha|w_2|^\beta e^{w^2_1+w^2_2}dx\right.\\-\left.(2-p-q)\lambda \int_{-1}^1f(x)|w_1(x, 0)|^p|w_2(x,0)|^q dx \right \}>0,
\end{align}
for every $\lambda\in (0,\Lambda_0)$.
\end{lem}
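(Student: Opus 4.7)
The plan is to combine a lower bound for the first integral in \eqref{gma} in terms of $\|w\|^2$ (extracted from the constraint defining $\Gamma$) with an upper bound of order $\|w\|^{p+q}$ for the integral against $f$. Since $2-(p+q)>0$, the former will dominate the latter once we know that $\|w\|$ is bounded below away from zero uniformly on $\Gamma\setminus\{0\}$, which is the heart of the argument.

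Setting $m=\alpha+\beta$, write
\[
A(w)=\int_{-1}^1 P(w_1^2+w_2^2)\,|w_1|^\alpha|w_2|^\beta e^{w_1^2+w_2^2}\,dx,\qquad B(w)=\int_{-1}^1 Q(w_1^2+w_2^2)\,|w_1|^\alpha|w_2|^\beta e^{w_1^2+w_2^2}\,dx,
\]
with $P(s)=(m-(p+q)+2s)(m+2+2s)$ and $Q(s)=(m-2+2s)(m+2s)$, so that $A(w)$ is the integral appearing in the definition of $\Gamma$ and $B(w)$ is the first integral in \eqref{gma}. Since $m>2>p+q$, both $P$ and $Q$ are strictly positive on $[0,\infty)$ and $Q(s)/P(s)\to 1$ as $s\to\infty$; hence $c_0:=\inf_{s\ge 0}Q(s)/P(s)>0$. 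Pointwise $Q\ge c_0 P$, so $B(w)\ge c_0 A(w)\ge c_0(2-p-q)\|w\|^2$ for every $w\in\Gamma$.

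To establish the uniform lower bound $\|w\|\ge \delta >0$ on $\Gamma\setminus\{0\}$, I would absorb the polynomial into the exponential: for any $\varepsilon>0$, $P(w_1^2+w_2^2)e^{w_1^2+w_2^2}\le C_\varepsilon e^{(1+\varepsilon)(w_1^2+w_2^2)}$. Choose $t>1$ close to $1$ and $\rho>0$ with $(1+\varepsilon)t\rho^2<\pi$. For $\|w\|\le\rho$, a Hölder inequality with exponents $t,t'$, the Moser--Trudinger inequality of Theorem~\ref{mtsys} applied to $w/\|w\|$, a second Hölder with conjugate exponents $\tfrac{\alpha+\beta}{\alpha}$ and $\tfrac{\alpha+\beta}{\beta}$, and the Sobolev trace embedding $H^{1/2}(-1,1)\hookrightarrow L^s$ (valid for every finite $s$) combine to give $A(w)\le \tilde C\|w\|^{\alpha+\beta}$. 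Together with $(2-p-q)\|w\|^2\le A(w)$ and $\alpha+\beta>2$, this forces $\|w\|\ge\delta_1:=\bigl((2-p-q)/\tilde C\bigr)^{1/(\alpha+\beta-2)}$; setting $\delta:=\min(\rho,\delta_1)>0$ covers both the small-norm and large-norm regimes.

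For the sign-changing term, Hölder in $L^r,L^{r'}$ (with $f\in L^r(-1,1)$) followed by a further Hölder with exponents $\tfrac{p+q}{p},\tfrac{p+q}{q}$ and the same trace embedding yields $\int_{-1}^1 f(x)|w_1|^p|w_2|^q\,dx\le K\|w\|^{p+q}$ for some $K=K(\|f\|_{L^r},p,q,r)$. Combining all estimates, for every $w\in\Gamma\setminus\{0\}$ the quantity inside the infimum of \eqref{gma} is bounded below by
\[
(2-p-q)\|w\|^{p+q}\bigl[c_0\|w\|^{2-(p+q)}-\lambda K\bigr]\;\ge\;(2-p-q)\delta^{p+q}\bigl[c_0\delta^{2-(p+q)}-\lambda K\bigr],
\]
which is strictly positive whenever $\lambda\in(0,\Lambda_0)$ with $\Lambda_0:=c_0\delta^{2-(p+q)}/K$. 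The genuinely delicate step is the uniform lower bound $\|w\|\ge\delta$: this is the one place where the $\Gamma$-constraint is truly exploited and where the sharp Moser--Trudinger inequality of Theorem~\ref{mtsys} is indispensable for taming the exponential factor against the polynomial $|w_1|^\alpha|w_2|^\beta$.
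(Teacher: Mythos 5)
Your proposal is correct and follows essentially the same route as the paper: a uniform positive lower bound on $\|w\|$ over $\Gamma\setminus\{0\}$ obtained from the Moser--Trudinger inequality (Theorem \ref{mtsys}) together with the $\Gamma$-constraint and $\alpha+\beta>2$, then a H\"older estimate on the sign-changing term, and finally a comparison of the two polynomial weights to conclude for $\lambda$ small. The only differences are cosmetic: you make the weight comparison explicit as a pointwise inequality $Q\ge c_0 P$ and route the final balance through powers of $\|w\|$, whereas the paper phrases the same steps via the constant $C_*=\inf_{\Gamma\setminus\{0\}}B(w)>0$.
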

\begin{proof}
We divide the proof into three steps:\\
\textbf{Step 1:} $\inf_{w\in \Gamma\setminus\{0\}}\|w\|>0$. \\
Suppose not. Then there exists $\{w_k\}$ in $\mathcal W^1_{0, L}(\mathcal C)\setminus \{0\}$ such that $\|w_k\|\rightarrow 0$ and
\begin{equation}\label{us1}
\|w\|^2\leq \frac{1}{2-p-q}\int_{-1}^1 (\alpha+\beta-p-q+2w_1^2+2w_2^2)(\alpha+\beta+2+2w_1^2+2w_2^2)|w_1|^\alpha|w_2|^\beta e^{w^2_1+w^2_2}dx. \end{equation}
 Now, using {Theorem} \ref{mtsys}, we get
\begin{align*}
\int_{-1}^{1} (\alpha+\beta-p-q&+2w_{k_1}^2+2w_{k_2}^2)(\alpha+\beta+2+2w_{k_1}^2+2w_{k_2}^2)|w_{k_1}|^\alpha|w_{k_2}|^\beta e^{w^2_{k_1}+w^2_{k_2}}dx \\
&=(\alpha+\beta-p-q)(\alpha+\beta+2)\int_{-1}^1|w_{k_1}|^\alpha|w_{k_2}|^\beta e^{w^2_{k_1}+w^2_{k_2}}dx\\
&+2(2\alpha+2\beta+2-p-q)\int_{-1}^1(w_{k_1}^2+w_{k_2}^2) |w_{k_1}|^{\alpha}|w_{k_2}|^\beta e^{w^2_{k_1}+w^2_{k_2}}dx\\
&+4\int_{-1}^1 (w_{k_1}^2+w_{k_2}^2)^2|w_{k_1}|^\alpha|w_{k_2}|^{\beta} e^{w^2_{k_1}+w^2_{k_2}}dx\\
&\leq C_1\|w_{k}\|^{\alpha+\beta}+C_2\|w_{k}\|^{\alpha+\beta+2}+C_3\|w_{k}\|^{\alpha+\beta+4}.
\end{align*}
Hence from equation \eqref{us1} and the last inequality, we get
$\|w_k\|^{\alpha+\beta}\geq C>0$, which is a contradiction as $\alpha+\beta>2$. Hence the claim is proved.\\
\textbf{Step 2:} Let $C_*=\inf_{w\in\Gamma\setminus\{0\}}\displaystyle \int_{-1}^1 (\alpha +\beta{-2}+2w_1^2+2w_2^2)(\alpha+\beta+2w_1^2+2w_2^2)|w_1|^\alpha|w_2|^\beta e^{w^2_1+w^2_2}dx$. Then $C_*>0$.\\
Proof follows from step 1 and the definition of $\Gamma$.\\
\textbf{Step 3:} Let $\lambda\in (0,\Lambda_0) \;\textrm{for}\; \Lambda_0=\frac{C_*^{1-a}}{2-p-q}$. Then  equation \eqref{gma} holds. {Indeed, let $\alpha_0=\frac{pr}{r-1}>1$ and $\beta_0=\frac{qr}{r-1}>1$ and $a=1-\frac{1}{r}$. Then we have:}
 {\begin{align*}
\lambda \int_{-1}^1f(x)&|w_1(x, 0)|^p |w_2(x,0)|^q dx \leq\lambda C_{f}\left(\int_{-1}^1|w_1(x, 0)|^{\alpha_0}|w_2(x,0)|^{\beta_0} dx\right)^a\\
&\leq\frac{(\lambda C_{f})}{C_*^{1-a}}\displaystyle \int_{-1}^1 (\alpha +\beta{-2}+2w_1^2+2w_2^2)(\alpha+\beta+2w_1^2+2w_2^2)|w_1|^\alpha|w_2|^\beta e^{w^2_1+w^2_2}dx
\end{align*}}
where $C_f=\|f\|_{L^\frac{1}{1-a}((-1, 1))}$. Thus if $(\lambda C_{f})<\Lambda_0=\frac{C_*^{1-a}}{2-p-q}$. Then equation \eqref{gma} holds.
\end{proof}
\noindent Now we discuss the behavior of $\Phi_w$ with respect to  $\int_{-1}^1f(x)|w_1(x, 0)|^p|w_2(x,0)|^q dx $.\\
\textbf{Case 1:} $  \int_{-1}^1f(x)|w_1(x, 0)|^p|w_2(x,0)|^q dx\leq 0$. \\
We define
\begin{equation*}
\Psi_w(t)=t^{2-p-q}\|w\|^2-t^{1-p-q}\int_{-1}^1g_1(tw(x,0))w_1(x,0)dx-t^{1-p-q}\int_{-1}^1g_2(tw(x,0))w_2(x,0)dx.
 \end{equation*}
 It is clear that {$\Psi_w(0)=0$} and $tw\in \mathcal{N}_{\lambda}$ if and only if \\
 $\Psi_w(t)=\lambda\int_{-1}^1f(x)|w_1(x, 0)|^p|w_2(x,0)|^q dx$.
Observe that $\displaystyle \lim_{t\rightarrow\infty}\Psi_w(t)\rightarrow -\infty$, $\displaystyle \lim_{t\rightarrow\infty}\Psi'_w(t)\rightarrow -\infty$, {$\Psi''_w(t)\leq 0$ for any $t>0$} and $\displaystyle \lim_{t\rightarrow 0^+}\Psi'_w(t)>0.$ Hence there exists a unique $t_*(w)>0$ such that $\Psi_w(t)$ is increasing in $(0, t_*)$, decreasing in $(t_*, \infty)$. Hence for all values of $\lambda$ there exists a unique $t^-(w)>t_*(w)$ such that $\Psi_w(t^-)=\lambda\int_{-1}^1f(x)|w_1(x, 0)|^p|w_2(x,0)|^q dx\leq 0$. Since $\Psi'_w(t)<0$ for $t>t_*$, using that the relation $\Phi_{tw}^{\prime\prime}(1)=t^{p+q+1}\Psi'_w(t)$ {is valid for $t=t^-$, we get} $ t^-w\in \mathcal{N}_{\lambda}^-$.\\
\textbf{Case 2:}$\int_{-1}^1f(x)|w_1(x, 0)|^p|w_2(x,0)|^q dx >0$.\\
As discussed in case 1, we have $\Psi'_w(t_*)=0$ which implies $t_*w\in\Gamma\setminus\{0\}$. Now
\begin{align*}\nonumber
\Psi_w(t_*)\geq\frac{1}{(2-p-q)t_*^{p+q}}\int_{-1}^1(\alpha+\beta-{2}+2|t_*w_1|^2&+2|t_*w_2|^2)(\alpha+\beta+2|t_*w_1|^2+2|t_*w_2|^2)\\
&\times|t_*w_1|^\alpha|t_*w_2|^\beta e^{|t_*w_1|^2+|t_*w_2|^2}dx.
\end{align*}
Now from Lemma \ref{gmal}, $\Psi_w(t_*)-\lambda\int_{-1}^1f(x)|w_1(x,0)|^p|w_2(x,0)|^qdx >0$.
Therefore there exists unique $t^+(w)<t_*(w)<t^-(w)$ such that $\Psi'_w(t^+)>0$, $\Psi'_w(t^-)<0$, and $\Psi_w(t^+)=\Psi_w(t^-)=\lambda\int_{-1}^1f(x)|w_1(x,0)|^p|w_2(x,0)|^qdx$. As a consequence $t^+w\in \mathcal{N}^+_{\lambda}$ and $t^-w\in \mathcal{N}^-_{\lambda}$. Moreover since $\Phi_w'(t^-)=\Phi_w'(t^+)=0$, $\Phi_w'(t)<0$ on $(0,t^+)$ and $\Phi_w'(t)>0$ on $(t^+,t^-)$, $I_\lambda(t^+w)=\displaystyle \min_{0<t<t^-}I_\lambda(tw)$ and $I_\lambda(t^-w)=\displaystyle \max_{t>t_*}I_\lambda(tw)$. From above discussion we have the following lemma.
\begin{lem}\label{uhm}
For any $w\in \mathcal W^1_{0, L}(\mathcal C)$,
\begin{enumerate}[(i)]
\item If $\int_{-1}^1f(x)|w_1(x,0)|^p|w_2(x,0)|^qdx\leq 0$, there exists a unique $t^-(w)>0$ such that $t^-w\in \mathcal{N}_{\lambda}^-$ for every $\lambda>0$.
\item If $\int_{-1}^1f(x)|w_1(x,0)|^p|w_2(x,0)|^qdx>0$, there exists unique $t^+(w)<t_*(w)<t^-(w)$ such that $t^+w\in \mathcal{N}^+_{\lambda}$ and $t^-w\in \mathcal{N}^-_{\lambda}$ for every $\lambda_0$ satisfying Lemma \ref{gmal}. Moreover $I_\lambda(t^+ w)<I_\lambda(tw)$ for any $t\in [0,t^-]$ such that $t\neq t^+$ and $I_\lambda(t^-w)=\displaystyle \max_{t>t_*}I_\lambda(tw)$ .
    \end{enumerate}
\end{lem}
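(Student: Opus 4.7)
The strategy is to recast the Nehari constraint along the ray $\{tw:t>0\}$ as a level set of the scalar function
\[
\Psi_w(t) := t^{2-p-q}\|w\|^2 - t^{1-p-q}\int_{-1}^1\bigl(g_1(tw(x,0))w_1(x,0)+g_2(tw(x,0))w_2(x,0)\bigr)dx,
\]
since the factorization $\Phi'_w(t)=t^{p+q-1}\bigl(\Psi_w(t)-\lambda\int_{-1}^1 f(x)|w_1(x,0)|^p|w_2(x,0)|^q dx\bigr)$ shows that $tw\in\mathcal N_\lambda$ iff $\Psi_w(t)$ equals the target value $\lambda\int_{-1}^1 f|w_1|^p|w_2|^q dx$. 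Using $\alpha+\beta>2>p+q$ and the explicit form of $g_1,g_2$, I would verify that $\Psi_w(0)=0$, $\Psi'_w(0^+)>0$, $\Psi''_w\le 0$ on $(0,\infty)$, and $\Psi_w(t)\to-\infty$ as $t\to\infty$, so $\Psi_w$ has a unique critical point $t_*(w)>0$, strictly increasing before and strictly decreasing after, with $\Psi_w(t_*)>0$.

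For case (i), the target value is nonpositive, hence strictly below $\Psi_w(t_*)$, so the decreasing branch meets it exactly once, producing $t^->t_*$; since $\Psi'_w(t^-)<0$, the identity $\Phi''_{tw}(1)=t^{p+q+1}\Psi'_w(t)$ (valid on $\mathcal N_\lambda$ and obtained by substituting the Nehari relation into $\Phi''_w$) yields $t^-w\in\mathcal N_\lambda^-$. For case (ii), the key step is to prove $\Psi_w(t_*)>\lambda\int f|w_1|^p|w_2|^q dx$. Expanding $\Psi'_w(t_*)=0$ and regrouping terms, one sees that $t_*w$ satisfies the defining inequality of the set $\Gamma$ from Lemma \ref{gmal}, namely
\[
\|t_*w\|^2 \le \frac{1}{2-p-q}\int_{-1}^1 \bigl(\alpha+\beta-(p+q)+2(t_*w_1)^2+2(t_*w_2)^2\bigr)\bigl(\alpha+\beta+2+2(t_*w_1)^2+2(t_*w_2)^2\bigr)|t_*w_1|^\alpha|t_*w_2|^\beta e^{(t_*w_1)^2+(t_*w_2)^2}dx.
\]
Invoking Lemma \ref{gmal} for $\lambda\in(0,\Lambda_0)$ gives $\Psi_w(t_*)-\lambda\int f|w_1|^p|w_2|^q dx>0$, and strict monotonicity on either side of $t_*$ then produces exactly two crossings $0<t^+<t_*<t^-$ with $\Psi'_w(t^+)>0$ and $\Psi'_w(t^-)<0$, whence $t^+w\in\mathcal N_\lambda^+$ and $t^-w\in\mathcal N_\lambda^-$.

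The extremality statements are immediate from the factorization of $\Phi'_w$: in case (ii), $\Phi'_w<0$ on $(0,t^+)$, $\Phi'_w>0$ on $(t^+,t^-)$ and $\Phi'_w<0$ on $(t^-,\infty)$, so $t^+$ is a strict minimizer of $\Phi_w$ over $[0,t^-]$ and $t^-$ is a strict maximizer over $(t_*,\infty)$. The main obstacle I expect is the algebraic bookkeeping that turns $\Psi'_w(t_*)=0$ into the $\Gamma$-membership inequality: one must differentiate the terms $t^{1-p-q}g_i(tw)w_i$, expand the $(\alpha+2s^2)|s|^{\alpha-2}s|t|^\beta e^{s^2+t^2}$-type factors, and confirm that the resulting coefficients line up with the $(2-p-q)^{-1}$ prefactor in the definition of $\Gamma$. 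Everything else reduces to elementary one-variable calculus on the concave function $\Psi_w$ and the sign identity relating $\Phi''_{tw}(1)$ to $\Psi'_w(t)$.
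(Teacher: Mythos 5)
Your proposal follows essentially the same route as the paper: the same fibering-map reduction to the scalar function $\Psi_w$, the same factorization $\Phi'_w(t)=t^{p+q-1}\bigl(\Psi_w(t)-\lambda\int_{-1}^1 f|w_1|^p|w_2|^q\,dx\bigr)$ and sign identity $\Phi''_{tw}(1)=t^{p+q+1}\Psi'_w(t)$, and in case (ii) the same key step of showing $t_*w\in\Gamma$ from $\Psi'_w(t_*)=0$ and invoking Lemma \ref{gmal} to get $\Psi_w(t_*)>\lambda\int_{-1}^1 f|w_1|^p|w_2|^q\,dx$. The paper's argument is exactly the discussion preceding the lemma statement, and your outline matches it, including the level of detail at which the concavity of $\Psi_w$ and the $\Gamma$-membership computation are left to bookkeeping.
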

Now, the next Lemma shows that $\mathcal{N}_\lambda$ is a manifold.
\begin{lem}
Let $\Lambda_0$ as in Lemma \ref{gmal}. Then $\mathcal{N}_{\lambda}^0=\{0\}$ for all $\lambda\in (0, \Lambda_0)$.
\end{lem}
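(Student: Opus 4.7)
The plan is to argue by contradiction: suppose $w=(w_1,w_2)\in \mathcal{N}_\lambda^0\setminus\{0\}$ for some $\lambda\in (0,\Lambda_0)$. The strategy is to use the two defining identities of $\mathcal N_\lambda^0$ in two complementary linear combinations — first to show that $w$ lies in the set $\Gamma$ to which Lemma \ref{gmal} applies, and then to evaluate the quantity inside the braces of \eqref{gma} explicitly and contradict its strict positivity.

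For brevity I would write $A=\|w\|^2$, $B=\lambda\int_{-1}^{1}f(x)|w_1|^p|w_2|^q\,dx$, $C=\int_{-1}^{1}(\alpha+\beta+2w_1^2+2w_2^2)|w_1|^\alpha|w_2|^\beta e^{w_1^2+w_2^2}\,dx$ and $D=\int_{-1}^{1}K(w)|w_1|^\alpha|w_2|^\beta e^{w_1^2+w_2^2}\,dx$, with $K(w)=(\alpha+\beta+2w_1^2+2w_2^2)(\alpha+\beta-1+2w_1^2+2w_2^2)+4(w_1^2+w_2^2)$. Using $h_1(w)w_1+h_2(w)w_2=(\alpha+\beta+2w_1^2+2w_2^2)|w_1|^\alpha|w_2|^\beta$, membership $w\in\mathcal N_\lambda$ reads $A=B+C$, while the condition $\Phi_w''(1)=0$ from \eqref{fi2} reads $A=(p+q-1)B+D$.

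First I would form two combinations of these identities. Eliminating $B$ (multiply Nehari by $p+q-1$ and subtract from $\Phi_w''(1)=0$) gives $(2-p-q)A=D-(p+q-1)C$; the elementary inequality
\[
(\alpha+\beta+s)(\alpha+\beta-(p+q)+s)+2s\;\le\;(\alpha+\beta-(p+q)+s)(\alpha+\beta+2+s),
\]
with $s=2(w_1^2+w_2^2)\ge 0$ (its difference being exactly $2(\alpha+\beta-(p+q))>0$ under $\alpha+\beta>2>p+q$), then yields
\[
(2-p-q)\|w\|^2 \;\le\;\int_{-1}^{1}(\alpha+\beta-(p+q)+2w_1^2+2w_2^2)(\alpha+\beta+2+2w_1^2+2w_2^2)|w_1|^\alpha|w_2|^\beta e^{w_1^2+w_2^2}\,dx,
\]
so $w\in \Gamma$. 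Eliminating $A$ instead (simply subtract the two identities) gives $(2-p-q)B=D-C$, that is,
\[
(2-p-q)\lambda\!\int_{-1}^{1}\!f|w_1|^p|w_2|^q\,dx=\!\int_{-1}^{1}\!\bigl[(\alpha+\beta-2+2w_1^2+2w_2^2)(\alpha+\beta+2w_1^2+2w_2^2)+4(w_1^2+w_2^2)\bigr]|w_1|^\alpha|w_2|^\beta e^{w_1^2+w_2^2}\,dx.
\]

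Finally, since $w\in\Gamma\setminus\{0\}$ and $\lambda\in(0,\Lambda_0)$, Lemma \ref{gmal} forces the bracketed quantity in \eqref{gma} to be strictly positive at $w$; but substituting the second identity into that quantity telescopes the two exponential integrals and leaves precisely $-\int_{-1}^{1}4(w_1^2+w_2^2)|w_1|^\alpha|w_2|^\beta e^{w_1^2+w_2^2}\,dx\le 0$, a contradiction. Hence $\mathcal{N}_\lambda^0=\{0\}$. The hard part is organizational rather than analytic: one must notice that, after $B$ is stripped out, the relation $\Phi_w''(1)=0$ fits — via exactly the elementary inequality above — into the structural bound that defines $\Gamma$, which is what makes Lemma \ref{gmal} applicable; once this is seen, the sign contradiction is automatic.
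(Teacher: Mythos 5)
Your proof is correct and follows the same strategy as the paper: assume $w\in\mathcal{N}_\lambda^0\setminus\{0\}$, combine the Nehari identity \eqref{nlam} with $\Phi_w''(1)=0$ from \eqref{fi2} to place $w$ in $\Gamma$, and then contradict the positivity in \eqref{gma} of Lemma \ref{gmal}. Your version is in fact more explicit than the paper's (which dismisses the membership $w\in\Gamma$ as ``easy'' and states a slightly looser intermediate inequality), since you carry out both linear combinations exactly and observe that the quantity in \eqref{gma} reduces to $-4\int_{-1}^{1}(w_1^2+w_2^2)|w_1|^\alpha|w_2|^\beta e^{w_1^2+w_2^2}\,dx\le 0$.
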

\begin{proof}
We prove this lemma by contradiction. Let $w\in \mathcal{N}_{\lambda}^0$ such that $w\not\equiv 0$. Then from \eqref{fi2} we get

\begin{align}\nonumber
\|w\|^2 &=(p+q-1)\lambda\int_{-1}^1f(x)|w_1(x, 0)|^p|w_2(x,0)|^q  \\\label{fi2u}
&+\int_{-1}^1 \left\{(\alpha+\beta+2w_1^2+2w_2^2)(\alpha+\beta-1+2w_1^2+2w_2^2)+4(w_1^2+w_2^2)\right\}|w_1|^\alpha|w_2|^\beta e^{w_1^2+w_2^2} .
\end{align}
Using equation \eqref{nlam}, it is easy to show that $w\in \Gamma$. Now from equation \eqref{nlam} and \eqref{fi2u}, we get
\begin{align*}
(2-p-q)&\lambda\int_{-1}^1f(x)|w_1(x, 0)|^p|w_2(x,0)|^q dx\\&\geq \int_{-1}^1 (\alpha+\beta-p-q+2w_1^2+2w_2^2)(\alpha+\beta+2w_1^2+2w_2^2)|w_1|^\alpha|w_2|^\beta e^{w^2_1+w^2_2}dx\\
&{\geq \int_{-1}^1 (\alpha+\beta-2+2w_1^2+2w_2^2)(\alpha+\beta+2w_1^2+2w_2^2)|w_1|^\alpha|w_2|^\beta e^{w^2_1+w^2_2}dx}.
\end{align*}
Hence if $\lambda\in(0,\Lambda_0)$, we get a contradiction.
\end{proof}
\begin{lem}\label{cbb}
$I_{\lambda}$ is coercive and bounded below on $\mathcal{N}_{\lambda}$.
\end{lem}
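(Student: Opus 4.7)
\medskip
\noindent\textbf{Plan of proof.} My plan is to combine the Ambrosetti--Rabinowitz type identity satisfied by $G$ with the Nehari constraint, thereby absorbing the exponential integral $\int G$ into $\|w\|^{2}$, after which only the sublinear sign-changing integral remains to be controlled via H\"older and the one-dimensional trace embedding.

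First, a direct differentiation shows that, with $G(u,v)=|u|^{\alpha}|v|^{\beta}e^{u^{2}+v^{2}}$, one has $u\,g_{1}(u,v)+v\,g_{2}(u,v)=(\alpha+\beta+2(u^{2}+v^{2}))\,G(u,v)\ge (\alpha+\beta)\,G(u,v)$. Inserted into the Nehari identity \eqref{nlam} this gives
\[
\int_{-1}^{1}G(w(x,0))\,dx\ \le\ \frac{1}{\alpha+\beta}\Bigl(\|w\|^{2}-\lambda\int_{-1}^{1}f(x)|w_{1}(x,0)|^{p}|w_{2}(x,0)|^{q}\,dx\Bigr).
\]
Using this bound to replace the $-\int G$ term in the definition of $I_{\lambda}(w)$, I obtain
\[
I_{\lambda}(w)\ \ge\ \Bigl(\tfrac{1}{2}-\tfrac{1}{\alpha+\beta}\Bigr)\|w\|^{2}-\lambda\Bigl(\tfrac{1}{p+q}-\tfrac{1}{\alpha+\beta}\Bigr)\int_{-1}^{1} f(x)|w_{1}(x,0)|^{p}|w_{2}(x,0)|^{q}\,dx,
\]
where both parenthetical factors are strictly positive because $\alpha+\beta>2>p+q$.

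It remains to dominate the sublinear integral by a power of $\|w\|$ strictly smaller than $2$. Successively applying H\"older's inequality with $f\in L^{r}$, a further H\"older in $|w_{1}|^{p}|w_{2}|^{q}$ with conjugate exponents $(p+q)/p$ and $(p+q)/q$, and finally the one-dimensional fractional trace embedding $X\hookrightarrow L^{s}(-1,1)$ for every finite $s$ (available in this dimension and in particular implicit in Theorem \ref{mtsys}), I obtain
\[
\Bigl|\int_{-1}^{1}f(x)|w_{1}(x,0)|^{p}|w_{2}(x,0)|^{q}\,dx\Bigr|\ \le\ C\,\|f\|_{L^{r}}\,\|w\|^{p+q}.
\]
Plugging this into the previous estimate yields constants $c_{1},c_{2}>0$ (depending on $\lambda$) such that $I_{\lambda}(w)\ge c_{1}\|w\|^{2}-c_{2}\|w\|^{p+q}$ for every $w\in\mathcal{N}_{\lambda}$, and since $p+q<2$ both coercivity and a uniform lower bound follow immediately. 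The only mildly delicate step is the H\"older--trace chain in the last display; everything else is an algebraic manipulation of the Nehari identity together with the AR-type relation $u\,g_{1}+v\,g_{2}\ge (\alpha+\beta)G$.
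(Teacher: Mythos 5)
Your proof is correct and follows essentially the same route as the paper: both use the identity $u\,g_1+v\,g_2=(\alpha+\beta+2(u^2+v^2))G\ge(\alpha+\beta)G$ together with the Nehari constraint \eqref{nlam} to arrive at the estimate $I_\lambda(w)\ge\bigl(\tfrac12-\tfrac1{\alpha+\beta}\bigr)\|w\|^2-\lambda\bigl(\tfrac1{p+q}-\tfrac1{\alpha+\beta}\bigr)\int_{-1}^1 f|w_1|^p|w_2|^q\,dx$, and then conclude via H\"older and $p+q<2<\alpha+\beta$. You merely spell out the final H\"older--trace step ($|\int f|w_1|^p|w_2|^q|\le C\|f\|_{L^r}\|w\|^{p+q}$) that the paper leaves implicit.
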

\begin{proof}  Let $w\in \mathcal{N}_{\lambda}$. Using equations \eqref{nlam}, and the fact that $(\alpha+\beta)G(u,v)<g_1(u,v)u+g_2(u,v)v$, we get
\begin{eqnarray*}
I_\lambda(w)&=&\left(\frac{1}{2}-\frac{1}{\alpha+\beta}\right)\|w\|^2-\lambda\left(\frac{1}{p+q}-\frac{1}{\alpha+\beta}\right)\int_{-1}^1f(x)|w_1(x, 0)|^p|w_2(x,0)|^q dx \nonumber\\&-&\int_{-1}^1\left(G(w(x, 0))-\frac{1}{\alpha+\beta}(g_1(w(x, 0))w_1(x, 0)+g_2(w(x, 0))w_2(x, 0)\right)dx\\
&\geq&\left(\frac{1}{2}-\frac{1}{\alpha+\beta}\right)\|w\|^2-\lambda\left(\frac{1}{p+q}-\frac{1}{\alpha+\beta}\right)\int_{-1}^1f(x)|w_1(x, 0)|^p|w_2(x,0)|^q dx.\nonumber
\end{eqnarray*}
So using the fact that $\alpha+\beta>2>p+q$ and H\"{o}lder's inequality the lemma is proved.
\end{proof}
\noindent The following lemma shows that minimizers of $I_{\lambda}$ on $\mathcal{N}_{\lambda}$ are critical points of $I_{\lambda}$.
\begin{lem}
Let $w$ be a local minimizer of $I_{\lambda}$ in any decompositions of $\mathcal{N}_{\lambda}$ such that $w\not\in \mathcal{N}_{\lambda}^0$. Then w is a critical point of $I_{\lambda}$.
\end{lem}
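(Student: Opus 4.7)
The plan is to carry out the standard Lagrange multiplier argument: since $\mathcal{N}_\lambda$ is the zero set of a smooth functional and $w$ minimizes $I_\lambda$ on one of its components, there must exist a multiplier $\theta\in\mathbb{R}$ coupling $I_\lambda'(w)$ to the derivative of the constraint functional, and I will show that $\theta=0$ by testing against $w$ itself.

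More precisely, define
\[
F(w)=\langle I_\lambda'(w),w\rangle=\|w\|^2-\lambda\int_{-1}^1 f(x)|w_1(x,0)|^p|w_2(x,0)|^q\,dx-\int_{-1}^1\bigl(g_1(w(x,0))w_1(x,0)+g_2(w(x,0))w_2(x,0)\bigr)dx,
\]
so that $\mathcal{N}_\lambda=\{w\neq 0: F(w)=0\}$. The hypotheses on $h_1,h_2$ (and the polynomial part) ensure $F\in C^1$. Since $w$ is a local minimum of $I_\lambda$ on any one of the decompositions $\mathcal{N}_\lambda^+$, $\mathcal{N}_\lambda^-$, $\mathcal{N}_\lambda$ (with $w\notin\mathcal{N}_\lambda^0$, so in particular $F'(w)\neq 0$ on $w$, as I will verify), the Lagrange multiplier rule yields some $\theta\in\mathbb{R}$ with
\[
I_\lambda'(w)=\theta\, F'(w)\quad\text{in the dual of }\mathcal{W}^1_{0,L}(\mathcal{C}).
\]

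Next I would test this identity against $w$ itself. The left-hand side gives $\langle I_\lambda'(w),w\rangle=F(w)=0$ because $w\in\mathcal{N}_\lambda$. For the right-hand side I compute $\langle F'(w),w\rangle$ and observe that by the chain rule it coincides with the second derivative of the fiber map $\Phi_w(t)=I_\lambda(tw)$ evaluated at $t=1$, namely the quantity in equation \eqref{fi2}:
\[
\langle F'(w),w\rangle=\Phi_w''(1).
\]
Since $w\notin\mathcal{N}_\lambda^0$, by definition $\Phi_w''(1)\neq 0$. Therefore the equation $0=\theta\,\Phi_w''(1)$ forces $\theta=0$, and consequently $I_\lambda'(w)=0$, i.e.\ $w$ is a critical point of $I_\lambda$.

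The only nonroutine step is the identity $\langle F'(w),w\rangle=\Phi_w''(1)$; this is a direct but slightly tedious differentiation. Writing $F(w)=\Phi_w'(1)$ and noting that $\Phi_{tw}'(1)=t\,\Phi_w'(t)$ (homogeneity of the fiber variable), one differentiates in the $w$-variable in the direction $w$ to obtain $\frac{d}{ds}\big|_{s=1}F(sw)=\Phi_w''(1)+\Phi_w'(1)$; since $\Phi_w'(1)=0$ on $\mathcal{N}_\lambda$, the claim follows. No other obstacle arises, and the argument works on each of $\mathcal{N}_\lambda^\pm$ because in both cases the constraint surface is smooth at $w$ (thanks to $\Phi_w''(1)\neq 0$).
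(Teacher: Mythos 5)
Your proposal is correct and follows essentially the same route as the paper: apply the Lagrange multiplier rule with the constraint $C_\lambda(w)=\langle I_\lambda'(w),w\rangle$, pair the resulting identity with $w$, and use $\langle C_\lambda'(w),w\rangle=\Phi_w''(1)+\langle I_\lambda'(w),w\rangle=\Phi_w''(1)\neq 0$ to force the multiplier to vanish. The only difference is cosmetic — you spell out the verification of $\langle F'(w),w\rangle=\Phi_w''(1)$ via the homogeneity of the fiber map, which the paper states without derivation.
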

\begin{proof}
If $w$ minimizes $I_{\lambda}$ in $\mathcal{N}_{\lambda}.$  Then by Lagrange multiplier theorem, we get
 \begin{equation}\label{lag}
 I_{\lambda}^{'}(w) = \nu {C}_{\lambda}^{'}(w ),\; \textrm{where}\; {C}_{\lambda}(u)=\langle {I}_{\lambda}^{'}({u}),{u}\rangle=0.
 \end{equation}
 Now
{\begin{equation*}
    \langle {I}_{\lambda}^{'}(w),w\rangle = \nu\langle {C}_{\lambda}^{'}(w),w\rangle = \nu \left(\Phi^{\prime\prime}_{w}(1)+ \langle {I}_{\lambda}^{'}(w),w\rangle\right)=0.
\end{equation*}}
But $\Phi^{''}_{w}(1) \neq 0$ as $w\not\in {N}_{\lambda}^{0}.$ Thus $\nu = 0.$  Hence by \eqref{lag} we get $I'_{\lambda}(w)=0.$
\end{proof}
\begin{lem}\label{zii}
Let $\Lambda_0$ be such that equation \eqref{gma} hold.
Then given $w\in \mathcal{N}_{\lambda} \setminus \{0\}$, there exist $\epsilon>0$
and a differentiable function $\xi : \textbf{B}(0,\epsilon)\subset \mathcal W^1_{0, L}(\mathcal C) \rightarrow \mathbb{R}$ such that $\xi(0)=1$, the function $\xi(v)(w-v)\in \mathcal{N} _{\lambda}$. Moreover, for all $v\in \mathcal W^1_{0, L}(\mathcal C), \langle \xi^{\prime}(0),v\rangle=\frac{A}{H}$, where $A$ and $H$ are defined as

{\begin{align*}
A=&I''_\lambda(w)(w,v)+\langle I'(w),v\rangle=\\
&\displaystyle 2\int_\mathcal C \nabla w_1\nabla v_1dxdy+\displaystyle 2\int_\mathcal C \nabla w_2\nabla v_2dxdy \\&-\displaystyle\frac{\lambda p^2}{p+q}\int_{-1}^1f(x)|w_1(x, 0)|^{p-2}w_1(x, 0)|w_2(x,0)|^qv_1(x,0)dx\\&-\displaystyle\frac{\lambda q^2}{p+q}\int_{-1}^1f(x)|w_1(x, 0)|^{p}|w_2(x,0)|^{q-2}w_2(x, 0)v_1(x,0)dx\\&-\displaystyle\int_{-1}^1\left\{(\alpha+2|w_1|^2+2|w_2|^2)(\alpha+2|w_1|^2)+4w_1^2\right\}|w_1|^{\alpha-2}v_1|w_2|^\beta e^{|w_1|^2+|w_2|^2}dx\\&-\displaystyle\int_{-1}^1\left\{(\beta+2|w_1|^2+2|w_2|^2)(\beta+2|w_2|^2)+4w_2^2\right\}|w_1|^\alpha|w_2|^{\beta-2} w_2v_2e^{|w_1|^2+|w_2|^2}dx,
\end{align*}}
{\begin{align*}
&H=I''_\lambda(w)(w,w)=(2-p-q)\|w\|^2\\
&-\displaystyle\int_{-1}^1\left\{(\alpha+\beta-p-q+2|w_1|^2+2|w_2|^2)(\alpha+\beta+2|w_1|^2+2|w_2|^2)+4(w_1^2+w_2^2)\right\}\\
&\hspace{10cm}\times|w_1|^\alpha|w_2|^\beta e^{|w_1|^2+|w_2|^2}dx.
\end{align*}}
\end{lem}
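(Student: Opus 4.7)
The plan is to apply the implicit function theorem to a suitable auxiliary map in a standard fashion for Nehari manifold parametrizations. Define $F : \mathbb{R} \times \mathcal{W}^1_{0,L}(\mathcal{C}) \to \mathbb{R}$ by
\[
F(\xi,v) = \langle I'_\lambda(\xi(w-v)), \xi(w-v)\rangle.
\]
The key observation is that $\xi(w-v) \in \mathcal{N}_\lambda$ if and only if $F(\xi,v)=0$. First I would verify $F(1,0)=\langle I'_\lambda(w),w\rangle=0$, which is immediate from $w\in\mathcal{N}_\lambda$. Then I would check the $C^1$ regularity of $F$, which is where the exponential nonlinearity requires care: differentiability under the integral sign follows from dominated convergence combined with the Moser--Trudinger trace inequality (Theorem \ref{mtsys}), exactly as in the computation of $\Phi''_w(1)$ in \eqref{fi2}.

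Next I would compute $\partial_\xi F(1,0)$. Writing $\phi(\xi) := F(\xi,0) = \xi\,\Phi'_w(\xi)$ in terms of the fiber map, we get
\[
\partial_\xi F(1,0) = \phi'(1) = \Phi'_w(1) + \Phi''_w(1) = \Phi''_w(1),
\]
since $w\in\mathcal{N}_\lambda$. Because $w\notin\mathcal{N}_\lambda^0$, this derivative is nonzero, so the implicit function theorem produces $\epsilon>0$ and a differentiable $\xi:\mathbf{B}(0,\epsilon)\to\mathbb{R}$ with $\xi(0)=1$ and $F(\xi(v),v)=0$ on $\mathbf{B}(0,\epsilon)$, i.e.\ $\xi(v)(w-v)\in\mathcal{N}_\lambda$.

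Finally, to obtain the formula for $\xi'(0)$, I would differentiate the identity $F(\xi(v),v)\equiv 0$ at $v=0$ in an arbitrary direction $v$, giving
\[
\Phi''_w(1)\,\langle \xi'(0),v\rangle + \partial_v F(1,0)\,v = 0.
\]
A direct computation of $\partial_v F(1,0)\,v$ by differentiating $v\mapsto\langle I'_\lambda(w-v),w-v\rangle$ at $0$ yields
\[
\partial_v F(1,0)\,v = -\,I''_\lambda(w)(w,v) - \langle I'_\lambda(w),v\rangle,
\]
so that $\langle \xi'(0),v\rangle = A/H$ with $A$ and $H$ as stated. Expanding $A$ and $H$ using the explicit forms of $g_1,g_2$ and the fractional Sobolev inner product then reproduces the displayed formulas.

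The main technical obstacle is the $C^1$ smoothness of $F$ at $(1,0)$: since the nonlinear term involves $e^{w_1^2+w_2^2}$ with subcritical but nontrivial growth, the interchange of differentiation and integration must be justified by a uniform $L^1$ majorant on a neighborhood of $(\xi,v)=(1,0)$. This will be controlled by choosing $\epsilon$ small enough that $\|\xi(w-v)\|^2$ stays strictly below the Moser--Trudinger threshold $\pi$, so that the integrands and all their derivatives up to the required order are dominated by an $L^1$ function via Theorem \ref{mtsys} and Hölder's inequality; everything else is then routine.
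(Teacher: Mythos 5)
Your argument is essentially the paper's own proof: the authors define $\mathcal{F}_w(t,v)=\langle I'_\lambda(t(w-v)),(w-v)\rangle$ (your $F$ divided by $\xi$, which has the same zero set near $\xi=1$), check $\mathcal{F}_w(1,0)=0$ and $\partial_t\mathcal{F}_w(1,0)=\Phi''_w(1)\neq 0$ since $w\notin\mathcal{N}_{\lambda}^{0}$, and invoke the implicit function theorem to obtain $\xi$ with $\xi(0)=1$ and the quotient formula $\langle\xi'(0),v\rangle=A/H$. One caveat on your final technical remark: shrinking $\epsilon$ so that $\|\xi(v)(w-v)\|^2$ stays below the Moser--Trudinger threshold $\pi$ is not achievable for a fixed $w$ of large norm; the uniform $L^1$ majorant near $(1,0)$ should instead come from an inequality of the type $e^{(1+\delta)^2(w_i-v_i)^2}\le e^{(1+\delta)^4 w_i^2}\,e^{C_\delta v_i^2}$, where the first factor is integrable for the fixed $w$ and the second is controlled via Theorem \ref{mtsys} once $\|v\|$ is small (a verification the paper omits altogether).
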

\begin{proof}
For fixed $w\in \mathcal{N}_{\lambda}\setminus \{0\}$, define $\mathcal{F}_w:\mathbb{R}\times \mathcal W^1_{0, L}(\mathcal C) \rightarrow \mathbb{R}$ as follows
\begin{eqnarray*}
\mathcal{F}_w(t,v) &=& \langle {I'_\lambda(t(w-v)),(w-v)}\rangle\\
 &=&t\|w-v\|^{2}-t^{p+q-1}\lambda \displaystyle\int_{-1}^1f(x)|w_1(x, 0)-v_1(x, 0)|^{p}|w_2(x, 0)-v_2(x, 0)|^{q}dx\\&-&
\displaystyle \int_{-1}^1g_1(t(w(x, 0)-v(x, 0)))(w_1(x, 0)-v_1(x, 0))dx\\&-&\displaystyle \int_{-1}^1g_2(t(w(x, 0)-v(x, 0)))(w_2(x, 0)-v_2(x, 0))dx.
\end{eqnarray*}
Then $\mathcal{F}_w(1,0) = 0,\; \frac{\partial}{ \partial t}\mathcal{F}_w(1,0)\neq 0$ as {$w\not\in \mathcal{N}_{\lambda}^{0}.$} So we can apply implicit function theorem to get a differentiable function $\xi : \mathcal{B}(0, \epsilon) \subseteq \mathcal W^1_{0, L}(\mathcal C) \rightarrow \mathbb{R}$ such that $\xi(0) = 1$ and $\langle \xi^{\prime}(0),v\rangle=\frac{A}{H}$ for the choice of $A$ and $H$ defined in the Lemma.
Moreover $\mathcal{F}_w(\xi(v),v) = 0$, $\textrm{for all}\; v \in \mathcal{B}(0, \epsilon)$
which implies $ \xi(v)(w-v) \in \mathcal{N}_{\lambda}$.
\end{proof}

\noindent We define $\theta_{\lambda}:=\inf\left\{I_{\lambda}(w) | w\in {\mathcal{N}_{\lambda}^+}\right\}$  and we prove the following lemma.
\begin{lem}\label{negl}
There exists a constant $C>0$ such that $\theta_{\lambda}<{-\left(\frac{(\alpha+\beta-p-q)(\alpha+\beta-2)}{2(p+q)}\right)C}$.
\end{lem}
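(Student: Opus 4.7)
The plan is to exhibit a specific element of $\mathcal{N}_\lambda^+$ whose energy is strictly below the target level, and then invoke $\theta_\lambda \le I_\lambda(\cdot)$ at this element. First I would fix once and for all a nontrivial pair $\tilde w = (\tilde w_1, \tilde w_2) \in \mathcal{W}^1_{0,L}(\mathcal C)$ with $\tilde w_1, \tilde w_2 \ge 0$ in $\mathcal C$ and
\[
\int_{-1}^1 f(x)\,|\tilde w_1(x,0)|^p\,|\tilde w_2(x,0)|^q\,dx > 0.
\]
Such a $\tilde w$ exists because $f$ has a nontrivial positive part (otherwise $\mathcal{N}_\lambda^+$ is empty by Case 1, and there would be nothing to prove). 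By Case 2 of the discussion preceding Lemma \ref{uhm}, there exist $0 < t^+(\tilde w) < t_*(\tilde w) < t^-(\tilde w)$ with $t^+(\tilde w)\tilde w \in \mathcal{N}_\lambda^+$, and Lemma \ref{uhm}(ii) gives $I_\lambda(t^+(\tilde w)\tilde w) \le I_\lambda(t\tilde w)$ for every $t \in [0, t^-(\tilde w)]$. Thus $\theta_\lambda \le \min_{t\in[0,t^-]} I_\lambda(t\tilde w)$.

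The next step is an elementary estimate on $I_\lambda(t\tilde w)$ for small $t>0$. Writing
\[
I_\lambda(t\tilde w) = \frac{t^2}{2}\|\tilde w\|^2 - \frac{\lambda t^{p+q}}{p+q}\int_{-1}^1 f(x)\,|\tilde w_1|^p|\tilde w_2|^q\,dx - t^{\alpha+\beta}\int_{-1}^1 |\tilde w_1|^\alpha|\tilde w_2|^\beta\,e^{t^2(\tilde w_1^2+\tilde w_2^2)}\,dx,
\]
I would bound the exponential in the last integral by $e^{\tilde w_1^2+\tilde w_2^2}$ for $t \in (0,1]$, so that the $G$-term is at most $C_1 t^{\alpha+\beta}$ with $C_1 < \infty$ by Theorem \ref{mtsys}. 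Since $1 < p+q < 2 < \alpha+\beta$, the $t^{p+q}$ contribution has the lowest order and the correct sign, so for $t$ sufficiently small one obtains $I_\lambda(t\tilde w) \le -c_0\, t^{p+q}$ for a constant $c_0>0$ depending on $\lambda$, $f$, and $\tilde w$ but not on $t$. Choosing $t$ so small that also $t \le t^-(\tilde w)$ (possible because $t^-(\tilde w)>0$), Lemma \ref{uhm}(ii) yields
\[
\theta_\lambda \le I_\lambda(t^+(\tilde w)\tilde w) \le I_\lambda(t\tilde w) \le -c_0\, t^{p+q} < 0.
\]

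Since $\alpha+\beta > 2 > p+q$, the coefficient $\frac{(\alpha+\beta-p-q)(\alpha+\beta-2)}{2(p+q)}$ is strictly positive, so defining
\[
C := \frac{2(p+q)\, c_0\, t^{p+q}}{(\alpha+\beta-p-q)(\alpha+\beta-2)} > 0
\]
rewrites the bound in the announced form. The specific factoring of the constant is cosmetic: it matches the shape of later estimates (compare Lemma \ref{gmal} and the computations on $\mathcal{N}_\lambda^\pm$) and plays no role in this proof beyond packaging a positive constant.

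The only substantive point is selecting a test function for which $\int f\,|\tilde w_1|^p|\tilde w_2|^q\,dx > 0$; after that the bound is a pure order-of-vanishing argument using $p+q < 2 < \alpha+\beta$. The main potential obstacle would be controlling the exponential term over the relevant range of $t$, but restricting to $t \in (0,1]$ (which is compatible with $t \le t^-(\tilde w)$ up to shrinking) makes this uniform.
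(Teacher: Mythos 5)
Your proof is correct, but it takes a genuinely different route from the paper's. The paper works directly with the point $w=t^+(v)v\in\mathcal{N}_\lambda^+$ and exploits the two constraints available there: the Nehari identity \eqref{nlam} is used to eliminate the $f$-term from $I_\lambda(w)$, and the inequality $\Phi''_w(1)>0$, rewritten as \eqref{fi2in}, is used to eliminate $\|w\|^2$; after an algebraic simplification this yields
$I_\lambda(w)\le -\frac{(\alpha+\beta-p-q)(\alpha+\beta-2)}{2(p+q)}\int_{-1}^1|w_1|^\alpha|w_2|^\beta\,dx$,
so the constant $C=\int_{-1}^1|w_1|^\alpha|w_2|^\beta\,dx$ and the stated prefactor emerge intrinsically from the geometry of $\mathcal{N}_\lambda^+$. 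You instead use the fiber-map minimality $I_\lambda(t^+\tilde w)\le I_\lambda(t\tilde w)$ for $t\in[0,t^-]$ from Lemma \ref{uhm}(ii) together with a small-$t$ expansion in which the concave term $-\frac{\lambda t^{p+q}}{p+q}\int_{-1}^1 f|\tilde w_1|^p|\tilde w_2|^q\,dx$ dominates (since $p+q<2<\alpha+\beta$), obtaining $\theta_\lambda\le -c_0t^{p+q}<0$ and then renaming the constant. Since the lemma only asserts the existence of some $C>0$, and only the strict negativity of $\theta_\lambda$ is used later in the paper, both arguments do the job; the paper's version has the cosmetic advantage that the prefactor is not artificial. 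Two small remarks on your write-up: (i) you do not need to control the exponential factor at all, since the term $-\int_{-1}^1 G(t\tilde w)\,dx$ is nonpositive and can simply be dropped from the upper bound — this removes the one step you flag as delicate; (ii) the parenthetical ``otherwise there would be nothing to prove'' is not quite right: if no admissible $\tilde w$ exists then $\mathcal{N}_\lambda^+=\emptyset$ and $\theta_\lambda=+\infty$, so the lemma would be false rather than vacuous. The paper makes the same implicit assumption that $f^+\not\equiv 0$, so this is not a defect relative to the paper, but it deserves an explicit hypothesis rather than a dismissal.
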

\begin{proof}
Let $v\in \mathcal W^1_{0, L}(\mathcal C)$ be such that  $\int_{-1}^1 f(x)|v_1(x, 0)|^p|v_2(x,0)|^qdx>0$. Then from Lemma \ref{uhm}(ii), we get a $w\in \mathcal{N}_{\lambda}^+$.
\begin{eqnarray}\label{rhj}\nonumber
I_\lambda(w)=\left(\frac{p+q-2}{2(p+q)}\right)\|w\|^2&+&\frac{1}{p+q}\left(\int_{-1}^1 g_1(w(x,0))w_1(x, 0)dx\right.\\&+&\left.\int_{-1}^1 g_2(w(x,0))w_2(x, 0)dx\right)
-\int_{-1}^1 G(w(x, 0)) dx.
\end{eqnarray}
Also using \eqref{fi2}, we get
\begin{equation}\label{fi2in}
(2-p-q)\|w\|^2\geq\int_{-1}^1 (\alpha+\beta-p-q+2w_1^2+2w_2^2)(\alpha+\beta+2w_1^2+2w_2^2)|w_1|^\alpha|w_2|^\beta e^{w^2_1+w^2_2}dx.
\end{equation}
Now using \eqref{fi2in} in \eqref{rhj}, we get
\begin{align*}
& I_\lambda(w)\leq\\
&{\frac{-1}{2(p+q)}\int_{-1}^1 \left\{(\alpha+\beta-p-q+2w_1^2+2w_2^2)(\alpha+\beta-2+2w_1^2+2w_2^2)\right\}|w_1|^\alpha|w_2|^\beta e^{w^2_1+w^2_2}dx}\\
&{\leq}-{\frac{(\alpha+\beta-2)(\alpha+\beta-p-q)}{2(p+q)}\int_{-1}^1|w_1|^\alpha|w_2|^\beta e^{w^2_1+w^2_2} dx}\\
&{\leq}-{\frac{(\alpha+\beta-2)(\alpha+\beta-p-q)}{2(p+q)}C}
\end{align*}
where $C=\int_{-1}^1|w_1|^\alpha|w_2|^\beta $.
\end{proof}
In the next proposition we show the existence of a Palais-Smale sequence.
\begin{pro}\label{pscdp}
{There exists $0<\Lambda'_0\leq \Lambda_0$ such that for any $\lambda \in (0,\Lambda'_{0})$}, then there exists a minimizing sequence $\{w_k\}=(w_{k_1}, w_{k_2}) \subset \mathcal{N}_{\lambda}$ such that
\begin{equation}\label{pscd}
    {I}_{\lambda}(w_{k}) = \theta_{\lambda}+o_k(1) \;\textrm{and}\; {I}_{\lambda}^{'}(w_{k}) = o_k(1).
\end{equation}
\end{pro}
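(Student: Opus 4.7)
The plan is to invoke Ekeland's variational principle on the manifold $\mathcal{N}_{\lambda}$ and then use the implicit function produced in Lemma~\ref{zii} to compare $I_{\lambda}(w_k)$ with values of $I_{\lambda}$ along a curve through $w_k$ that stays in $\mathcal{N}_{\lambda}$. Since $I_{\lambda}$ is bounded below on $\mathcal{N}_{\lambda}$ by Lemma~\ref{cbb}, Ekeland's principle applied to $\bar{I_{\lambda}}=I_{\lambda}\!\mid_{\mathcal{N}_{\lambda}}$ yields a sequence $\{w_k\}\subset\mathcal{N}_{\lambda}$ satisfying
\[
I_{\lambda}(w_k)\le \theta_{\lambda}+\tfrac{1}{k},\qquad
I_{\lambda}(w)\ge I_{\lambda}(w_k)-\tfrac{1}{k}\|w-w_k\|\ \ \forall w\in\mathcal{N}_{\lambda}.
\]

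Next, I would extract quantitative properties of the sequence. Because $\theta_{\lambda}<0$ (Lemma~\ref{negl}), for large $k$ we have $w_k\ne 0$; in fact $w_k\in\mathcal{N}_{\lambda}^{+}$ since Lemma~\ref{uhm} and the geometry of the fiber map show that $I_{\lambda}$ is positive on $\mathcal{N}_{\lambda}^{-}$ near zero (coupled with coercivity from Lemma~\ref{cbb}, this bounds $\|w_k\|$). The key technical step is then to show that the denominator
\[
H_k:=I''_{\lambda}(w_k)(w_k,w_k)
\]
is bounded away from zero along the sequence. For $\lambda\in(0,\Lambda_0)$ we already know $\mathcal{N}_{\lambda}^{0}=\{0\}$, so $H_k\ne 0$; to get a uniform lower bound one couples Lemma~\ref{gmal} (applied to a putative subsequence where $H_k\to 0$, which would force $w_k$ to satisfy the $\Gamma$-defining inequality) with the positive lower bound $\inf\|w_k\|>0$ coming from $\theta_{\lambda}<0$. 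This may require shrinking to a $\Lambda'_0\le\Lambda_0$, which accounts for the statement of the proposition.

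With this uniform bound, I would finish as follows. Fix $\phi\in\mathcal{W}^1_{0,L}(\mathcal{C})$ with $\|\phi\|=1$. By Lemma~\ref{zii} applied at $w_k$, there exists $\epsilon_k>0$ and $\xi_k:B(0,\epsilon_k)\to\mathbb{R}$ with $\xi_k(0)=1$ such that $\xi_k(\rho\phi)(w_k-\rho\phi)\in\mathcal{N}_{\lambda}$ for $0<\rho<\epsilon_k$. Plugging this into the Ekeland inequality gives
\[
\frac{1}{k}\bigl\|\xi_k(\rho\phi)(w_k-\rho\phi)-w_k\bigr\|\ge I_{\lambda}(w_k)-I_{\lambda}\!\bigl(\xi_k(\rho\phi)(w_k-\rho\phi)\bigr).
\]
Using $\langle I'_{\lambda}(w_k),w_k\rangle=0$, a Taylor expansion in $\rho$ of the right-hand side yields $\rho\,\langle I'_{\lambda}(w_k),\phi\rangle+o(\rho)$, while the left-hand side is bounded by $\tfrac{1}{k}(\rho+\rho|\langle\xi'_k(0),\phi\rangle|\,\|w_k\|)+o(\rho)$. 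Dividing by $\rho$ and letting $\rho\to 0^{+}$, the uniform bound on $|\langle\xi'_k(0),\phi\rangle|$ (which follows from $\|w_k\|$ bounded and $|H_k|\ge c>0$, together with an estimate of $|A|\le C\|\phi\|$ from the Trudinger--Moser inequality of Theorem~\ref{mtsys} and Hölder) gives
\[
|\langle I'_{\lambda}(w_k),\phi\rangle|\le \frac{C}{k}
\]
uniformly in $\phi$ with $\|\phi\|=1$, hence $I'_{\lambda}(w_k)\to 0$ in the dual space.

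The main obstacle is the uniform lower bound $|H_k|\ge c>0$. The qualitative statement $\mathcal{N}_{\lambda}^{0}=\{0\}$ does not by itself suffice; one must rule out $H_k\to 0$ by a compactness--contradiction argument that combines the coercivity estimate from Lemma~\ref{cbb}, the positive lower bound on $\|w_k\|$ forced by $\theta_{\lambda}<0$ (via Lemma~\ref{negl}), and the strict inequality in Lemma~\ref{gmal}, possibly at the cost of restricting to a smaller threshold $\Lambda'_0\le\Lambda_0$. All other steps are routine once this quantitative nondegeneracy is in place.
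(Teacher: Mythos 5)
Your proposal follows essentially the same route as the paper: Ekeland's variational principle on $\mathcal{N}_{\lambda}$ (using Lemma \ref{cbb} for boundedness below and coercivity, and Lemma \ref{negl} to get $w_k\neq 0$), then the implicit function $\xi_k$ from Lemma \ref{zii} to test the Ekeland inequality along $\xi_k(v_\rho)(w_k-v_\rho)$, with the decisive step being the uniform lower bound on the denominator $H$, obtained exactly as you describe by a contradiction argument showing that $H_k\to 0$ would place $w_k$ in $\Gamma\setminus\{0\}$ and violate the strict inequality \eqref{gma} of Lemma \ref{gmal}. The proposal is correct and matches the paper's proof in all essential respects.
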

\begin{proof}
 From Lemma \ref{cbb}, $I_{\lambda}$ is bounded below on $\mathcal{N}_{\lambda}$. So by Ekeland variational principle, there exists a minimizing sequence $\{w_k\}\in \mathcal{N}_{\lambda}$ such that
\begin{eqnarray*}
I_{\lambda}(w_k)&\leq& \theta_{\lambda}+\frac{1}{k},\label{ek1}\\
I_{\lambda}(v)&\geq& I_{\lambda}(w_k)- \frac{1}{k}\|v-w_k\|\;\;\mbox{for all}\;\;v\in \mathcal{N}_{\lambda}.
\end{eqnarray*}
for all $v\in \mathcal{N}_\lambda$. Using equation \eqref{ek1} and Lemma \ref{negl}, it is easy to show that $w_k\neq 0$. From Lemma \ref{cbb},  we have that $\displaystyle\sup_{k}\Vert w_k\Vert<\infty$. Next we claim that $\|I^{\prime}_{\lambda}(w_k)\|\rightarrow 0$ as $k \rightarrow 0$.
Now, using the Lemma \ref{zii} we get the differentiable functions $\xi_k:\mathcal{B}(0, \epsilon_k)\rightarrow \mathbb{R}$ for some $\epsilon_k>0$ such that $\xi_k(v)(w_k-v)\in \mathcal{N}_{\lambda}$,\; $\textrm{for all}\;\; v\in \mathcal{B}(0, \epsilon_k).$
For fixed $k$, choose $0<\rho<\epsilon_k$. Let $w\in \mathcal W^1_{0, L}(\mathcal C)$ with $w\not\equiv 0$ and let $v_\rho=\frac{\rho w}{\|w\|}$. We set $\eta_\rho=\xi_k(v_\rho)(w_k-v_\rho)$. Since $\eta_\rho \in \mathcal{N}_{\lambda}$, we get from equation \eqref{nlam}
\begin{align*}
I_{\lambda}(\eta_\rho)-I_{\lambda}(w_k)\geq-\frac{1}{k}\|\eta_\rho-w_k\|.
\end{align*}
Now by mean value Theorem,
and using $\langle I_{\lambda}^{\prime}(\eta_\rho),(w_k-v_\rho)\rangle=0$, we get
\[
-\rho\langle I_{\lambda}^{\prime}(w_k),\frac{{w}}{\|{w}\|}\rangle+(\xi_k(v_\rho)-1)\langle I_{\lambda}^{\prime}(w_k)-I_{\lambda}^{\prime}(\eta_\rho),(w_k-v_\rho)\rangle \geq \frac{-1}{k}\|\eta_\rho-w_k\|+o_k(\|\eta_\rho-w_k\|).
\]
Thus
\begin{equation}\label{fifte}
\langle I_{\lambda}^{\prime}(w_k),\frac{w}{\|w\|}\rangle \leq \frac{1}{k\rho}\|\eta_\rho-w_k\|+\frac{o_k(\|\eta_\rho-w_k\|)}{\rho}+
\frac{(\xi_k(v_\rho)-1)}{\rho}\langle  I_{\lambda}^{\prime}(w_k)-I_{\lambda}^{\prime}(\eta_\rho),(w_k-v_\rho)\rangle.
\end{equation}
Since
$\displaystyle
\|\eta_\rho-w_k\|\leq \rho|\xi_k(v_\rho)|+|\xi_k(v_\rho)-1|\|w_k\|
$
and
$
\displaystyle\lim_{\rho\rightarrow 0^+}\frac{|\xi_k(v_\rho)-1|}{\rho}{=} \|\xi_k'(0)\|,$  taking limit  $\rho\rightarrow 0^+$\ in \eqref{fifte}, we get
\begin{equation*}
\langle I^{\prime}_{\lambda}(w_k),\frac{w}{\|w\|}\rangle\leq\frac{C}{k}(1+\|\xi_k^{'}(0)\|)
\end{equation*}
for some constant $C>0$, independent of $w$. So if we can show that $\|\xi_k^{'}(0)\|$ is bounded then we are done. {First, we observe that from the last inequality in the proof of Lemma \ref{cbb}, the boundednes of $\{w_k\}$ and Lemma 2.7, we obtain that $\Vert w_k\Vert\leq C\lambda$.
 Hence from Lemma \ref{zii}, taking $\Lambda_0$ small enough} and H\"older's inequality, for some $M>0$, we get
 $\langle \xi^{\prime}(0),v\rangle= \frac{M\|v\|}{H}$.
So to prove the claim we only need to prove that denominator $H$ in the expression of $\langle \xi^{\prime}(0),v\rangle$
 is bounded away from zero. Suppose not. Then there exists a subsequence still denoted $\{w_k\}$ such that
\begin{align}\nonumber
&(2-p-q)\|w_k\|^2-\displaystyle\int_{-1}^1(\alpha+\beta-p-q+2|w_{k_1}|^2+2|w_{k_2}|^2)(\alpha+\beta+2|w_{k_1}|^2+2|w_{k_2}|^2)\\\label{baw}
&\hspace{7cm}\times |w_{k_1}|^\alpha|w_{k_2}|^\beta e^{|w_{k_1}|^2+|w_{k_2}|^2}dx=o_k(1).
\end{align}
 Therefore $w_k\in \Gamma\setminus\{0\}$ for all $k$ large. Now using the fact that $ w_k\in \mathcal{N}_{\lambda}$, equation \eqref{baw}
 contradicts {\eqref{gma}} for $\lambda \in(0, \Lambda_0)$. Hence the proof of the lemma is now complete.
\end{proof}
\section{Existence and multiplicity results}
\setcounter{equation}{0}
\noindent In this section we show the existence and multiplicity of solutions by minimizing $I_{\lambda}$ on nonempty decomposition  of $\mathcal{N}_{\lambda}$ and generalized mountain pass theorem for suitable range of $\lambda$.
\begin{thm}
Let $\lambda$ be such that Lemma \ref{gmal} {and Proposition \ref{pscdp} hold}. Then there exists a function $\tilde{w} \in \mathcal{N}_{\lambda}^+$ such that $I_{\lambda}(\tilde{w})=\displaystyle \inf_{w\in {\mathcal{N}_{\lambda}^+}}I_{\lambda}(w)$.
\end{thm}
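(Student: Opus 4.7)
The plan is to extract a Palais--Smale sequence and pass to the weak limit, with the delicate step being the exponential nonlinearity. I would proceed in the following stages.

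First, I would invoke Proposition \ref{pscdp} to select a sequence $\{w_k\} = (w_{k_1}, w_{k_2}) \subset \mathcal{N}_\lambda$ with $I_\lambda(w_k) \to \theta_\lambda$ and $I'_\lambda(w_k) \to 0$. By Lemma \ref{cbb} this sequence is bounded in $\mathcal{W}^1_{0,L}(\mathcal{C})$, and the sharper estimate $\|w_k\| \le C\lambda$ recorded in the proof of Proposition \ref{pscdp} ensures that by shrinking $\Lambda_0'$ if necessary, we can arrange $\|w_k\|^2 < \pi$. Up to a subsequence, $w_k \rightharpoonup \tilde{w}$ weakly in $\mathcal{W}^1_{0,L}(\mathcal{C})$, the traces $w_k(\cdot,0) \to \tilde{w}(\cdot,0)$ strongly in $L^s(-1,1)$ for every $s \ge 1$ by compactness of the trace embedding, and pointwise almost everywhere on $(-1,1)$.

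Next, I would pass to the limit in $\langle I'_\lambda(w_k), \phi \rangle \to 0$ for every $\phi \in \mathcal{W}^1_{0,L}(\mathcal{C})$. The quadratic gradient terms pass by weak convergence, and the subcritical term with $f$ passes by H\"older and the strong $L^s$ convergence of traces. The main obstacle is the exponential term: I would use Theorem \ref{mtsys} together with $\|w_k\|^2 < \pi$ to obtain, for some $r > 1$, a uniform bound $\sup_k \int_{-1}^1 e^{r(w_{k_1}(x,0)^2 + w_{k_2}(x,0)^2)}\,dx \le C$. Combined with the pointwise convergence and the polynomial factors in $h_1, h_2$, this yields equi-integrability of $h_i(w_k)e^{w_{k_1}^2 + w_{k_2}^2}\phi_i$, so Vitali's convergence theorem gives convergence of the nonlinear integrals. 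Hence $\tilde{w}$ is a weak solution, i.e. $I'_\lambda(\tilde{w}) = 0$.

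I would then establish strong convergence $w_k \to \tilde{w}$ in $\mathcal{W}^1_{0,L}(\mathcal{C})$. Testing $I'_\lambda(w_k) \to 0$ against $w_k$ and against $\tilde{w}$, and using the Vitali-type passage already established for the exponential and polynomial terms (so that both $\int h_i(w_k) w_{k_i} e^{|w_k|^2}\,dx \to \int h_i(\tilde w) \tilde w_i e^{|\tilde w|^2}\,dx$ and the corresponding quantity with $\tilde w$ in the test slot), I obtain $\|w_k\|^2 \to \|\tilde{w}\|^2$, which together with weak convergence in the Hilbert space forces strong convergence. Consequently $I_\lambda(\tilde{w}) = \theta_\lambda$.

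Finally, I would verify $\tilde{w} \in \mathcal{N}_\lambda^+$. Since $\theta_\lambda < 0$ by Lemma \ref{negl}, we have $\tilde w \not\equiv 0$, and the earlier lemma rules out $\mathcal{N}_\lambda^0 = \{0\}$. If $\tilde w \in \mathcal{N}_\lambda^-$, strong convergence and the fiber map analysis of Lemma \ref{uhm}(ii) would force $\int_{-1}^1 f(x)|\tilde w_1|^p|\tilde w_2|^q\,dx > 0$ (otherwise only $\mathcal{N}_\lambda^-$ exists and $\theta_\lambda$ would be $+\infty$), producing a strictly smaller value $I_\lambda(t^+(\tilde w)\tilde w) < I_\lambda(\tilde w) = \theta_\lambda$ at a point $t^+(\tilde w)\tilde w \in \mathcal{N}_\lambda^+$, contradiction. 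Hence $\tilde w \in \mathcal{N}_\lambda^+$ and achieves the infimum. The most delicate point, and the one I would write out in detail, is the Moser--Trudinger-based equi-integrability argument used to pass to the limit in the critical exponential nonlinearity.
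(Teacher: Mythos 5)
Your proposal is correct and follows essentially the same route as the paper: extract the Ekeland/Palais--Smale sequence from Proposition \ref{pscdp}, pass to the weak limit using trace compactness, the Moser--Trudinger bound and Vitali's theorem to handle the exponential terms, and then use the fibering-map dichotomy of Lemma \ref{uhm}(ii) to force $t^+(\tilde w)=1$. The only point worth tightening is your parenthetical justification that $\int_{-1}^1 f(x)|\tilde w_1|^p|\tilde w_2|^q\,dx>0$ (nonemptiness of $\mathcal{N}_\lambda^+$ alone does not control the sign of this integral for the particular limit $\tilde w$); the cleaner argument, which is the paper's, is that if this integral were $\le 0$ then the coercivity inequality from Lemma \ref{cbb} would give $I_\lambda(\tilde w)\ge 0>\theta_\lambda$, contradicting $I_\lambda(\tilde w)=\theta_\lambda<0$ from Lemma \ref{negl}.
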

\begin{proof}
Using Lemma \ref{cbb} and {Proposition \ref{pscdp}}, we get a minimizing sequence $\{w_k\}$ in $\mathcal{N}_{\lambda}^+$ satisfying equation \eqref{pscd}. From Lemma \ref{cbb}, it follows that $\{w_k\}$ is bounded in $\mathcal W^1_{0, L}(\mathcal C)$. So up to subsequence,
$w_{k_1} \rightharpoonup \tilde{w}_1, w_{k_2} \rightharpoonup \tilde{w}_2 \;\textrm{in}\; \mathcal W^1_{0, L}(\mathcal C),
w_{k_1}(x,0)\rightarrow \tilde w_1(x,0), w_{k_2}(x,0)\rightarrow \tilde w_2(x,0)$ pointwise a.e. $(-1, 1)$ and
$w_{k_1}(x,0)\rightarrow \tilde w_1(x,0), w_{k_2}(x,0)\rightarrow \tilde w_2(x,0)$ in $L^r(-1,1)$ for all $ r>1.$
Now H\"older's inequality  implies that
 \begin{align*}\displaystyle \int_{-1}^1f(x)|w_{k_1}(x, 0)|^p|w_{k_2}(x,0)|^q dx\rightarrow \displaystyle \int_{-1}^1f(x)|\tilde{w}_1(x, 0)|^p|\tilde{w}_2(x,0)|^q dx.
 \end{align*}
 Also using compactness of $w\mapsto \int_{-1}^1f(x)|w_1(x, 0)|^p|w_2(x,0)|^q dx$  and the fact that $w_k\in \mathcal{N}_{\lambda}$, we get
{
\begin{equation}\label{vitali}
\int_{-1}^1g_1(w_k(x, 0)w_{k_1}(x, 0))dx<\infty \mbox{ and }\int_{-1}^1g_2(w_k(x, 0)w_{k_2}(x, 0))dx<\infty.
\end{equation}}
 So from the Vitali's convergence theorem,
 \begin{align*}
 &\int_{-1}^1g_1(w_{k_1}(x, 0),w_{k_2}(x, 0))\phi_1(x, 0)dx\rightarrow \int_{-1}^1g_1(\tilde{w}_1(x, 0),\tilde{w}_2(x, 0))\phi_1(x, 0)dx,\\
 &\int_{-1}^1g_2(w_{k_1}(x, 0),w_{k_2}(x, 0))\phi_2(x, 0)dx\rightarrow \int_{-1}^1g_2(\tilde{w}_1(x, 0),\tilde{w}_2(x, 0))\phi_2(x, 0)dx.
 \end{align*}
 Hence $\tilde{w}$ solves $(P_E)_{\lambda}$ and $\tilde{w}\in \mathcal{N}_{\lambda}$.
 Next we show that $\tilde{w}\in \mathcal{N}_{\lambda}^+$. Now using Lemma \ref{negl} and \eqref{vitali} , we get $\displaystyle \int_{-1}^1f(x)|\tilde{w}_1(x, 0)|^p|\tilde{w}_2(x,0)|^q dx>0$. Hence by Lemma \ref{uhm}(ii), there exists $t^+(\tilde{w})$ such that $t^+\tilde{w}\in \mathcal{N}_{\lambda}^+$. Our claim is that $t^+(\tilde{w})=1$. Suppose $t^+(\tilde{w})<1$ then $t^-(\tilde{w})=1$. So $\tilde{w}\in \mathcal{N}_{\lambda}^-$. Now from Lemma \ref{uhm} (ii), {$I_{\lambda}(t^+(\tilde{w})\tilde{w})< I_{\lambda}(\tilde w)\leq\theta_{\lambda}$}, which is impossible as $t^+(\tilde{w})\tilde{w}\in \mathcal{N}_{\lambda}^+.$ This completes the proof of the theorem.
\end{proof}
\begin{thm}
Let $\lambda$ be such that Lemma \ref{gmal} {and Proposition \ref{pscdp} hold}. Then, the function $\tilde{w} \in \mathcal{N}_{\lambda}^+$ is a local minimum of $I_{\lambda}(w)$ in $\mathcal W^1_{0, L}(\mathcal C)$.
\end{thm}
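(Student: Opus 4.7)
The plan is to leverage the fiber map calculus from Lemma \ref{uhm}(ii) together with the local parametrization of $\mathcal{N}_\lambda$ near $\tilde w$ provided by Lemma \ref{zii}. The strategy is: show that every point $\tilde w - v$ in a small neighborhood of $\tilde w$ can be pulled onto $\mathcal{N}_\lambda^+$ by rescaling, that the rescaling factor is close to $1$, and that the fiber $t\mapsto I_\lambda(t(\tilde w - v))$ is minimized precisely at this rescaling factor on an interval that contains $t=1$.

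First, since $\tilde w\in \mathcal{N}_\lambda^+\setminus\{0\}$, Lemma \ref{zii} yields $\epsilon>0$ and a differentiable $\xi:B(0,\epsilon)\subset \mathcal W^1_{0,L}(\mathcal{C})\to\mathbb{R}$ with $\xi(0)=1$ and $\xi(v)(\tilde w-v)\in \mathcal{N}_\lambda$ for all $v\in B(0,\epsilon)$. The functional $w\mapsto \Phi''_w(1)$ is continuous on $\mathcal{W}^1_{0,L}(\mathcal{C})$ (by Theorem \ref{mtsys}, H\"older's inequality, and the Vitali convergence argument used in the proof of the preceding existence theorem), and it is strictly positive at $\tilde w$. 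Since $v\mapsto \xi(v)(\tilde w-v)$ is continuous and equals $\tilde w$ at $v=0$, after shrinking $\epsilon$ we obtain $\Phi''_{\xi(v)(\tilde w-v)}(1)>0$, i.e. $\xi(v)(\tilde w-v)\in \mathcal{N}_\lambda^+$ for every $v\in B(0,\epsilon)$.

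Next, from the previous theorem we know $\int_{-1}^1 f(x)|\tilde w_1(x,0)|^p|\tilde w_2(x,0)|^q\,dx>0$, so by continuity this integral remains positive for $\tilde w-v$ with $\|v\|$ small. Hence Case 2 of the analysis preceding Lemma \ref{uhm} applies to $\tilde w-v$, producing $t^+(\tilde w-v)<t_*(\tilde w-v)<t^-(\tilde w-v)$. By uniqueness of the point on $\mathcal{N}_\lambda^+$ lying on the ray through $\tilde w-v$, we must have $\xi(v)=t^+(\tilde w-v)$. The auxiliary map $\Psi_w$ and its critical structure depend continuously on $w\in\mathcal{W}^1_{0,L}(\mathcal{C})$ (again by Theorem \ref{mtsys} and Vitali), so $t^-(\tilde w-v)\to t^-(\tilde w)>t^+(\tilde w)=1$ as $v\to 0$. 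Shrinking $\epsilon$ once more, we ensure $1\in[0,t^-(\tilde w-v)]$ for every $v\in B(0,\epsilon)$.

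Finally, apply Lemma \ref{uhm}(ii) to $\tilde w-v$ at the parameter $t=1$:
\[
I_\lambda(\tilde w-v)=I_\lambda\bigl(1\cdot(\tilde w-v)\bigr)\;\ge\; I_\lambda\bigl(t^+(\tilde w-v)(\tilde w-v)\bigr)=I_\lambda\bigl(\xi(v)(\tilde w-v)\bigr).
\]
Since $\xi(v)(\tilde w-v)\in \mathcal{N}_\lambda^+$ and $\tilde w$ minimizes $I_\lambda$ on $\mathcal{N}_\lambda^+$, the right-hand side is at least $I_\lambda(\tilde w)$. Thus $I_\lambda(\tilde w-v)\ge I_\lambda(\tilde w)$ for all $v$ in a $\mathcal{W}^1_{0,L}(\mathcal{C})$-neighborhood of $0$, which is exactly the local minimum assertion. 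The main technical hurdle is the continuity of $w\mapsto t^-(w)$, or equivalently the persistence of the inequality $1<t^-(\tilde w-v)$; this rests on uniform control of the exponential nonlinearities under weak perturbations of $w$, which the Moser–Trudinger trace inequality \eqref{eq12} provides.
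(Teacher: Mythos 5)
Your proof is correct and follows essentially the same route as the paper's: both use the implicit-function parametrization from Lemma \ref{zii} to identify the rescaling factor with $t^+(\tilde w-v)$ for small perturbations, and then conclude via the fiber-map inequality $I_\lambda(t^+(\tilde w-v)(\tilde w-v))\le I_\lambda(\tilde w-v)$ from Lemma \ref{uhm}(ii) combined with the minimality of $\tilde w$ on $\mathcal{N}_\lambda^+$. The only cosmetic difference is that the paper secures $\xi(v)=t^+(\tilde w-v)$ and the admissibility of $t=1$ through the continuity of $t_*$ (via $\Psi''_w<0$), whereas you argue through the continuity of $\Phi''(1)$ and of $t^-$; these are interchangeable.
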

\begin{proof}
Since $\tilde{w}\in \mathcal{N}_{\lambda}^{+}$, we have $t^+(\tilde{w})=1
<t_*(\tilde{w})$. Hence { since $\Psi_w''(t)<0$ for $t>0$, $w\mapsto t_*(w)$ is continuous and  given}
$\epsilon>0$, there exists $\delta=\delta(\epsilon)>0$ such that $1+\epsilon< t_*(\tilde{w}-w)$
for all {$\|w\|<\delta$}. Also, from Lemma \ref{zii} we have, for $\delta>0$
small enough, we obtain a $C^1$ map $t: \textbf{B}(0,\delta)\rightarrow \mathbb R^+$
such that $t(w)(\tilde{w}-w)\in  \mathcal{N}_{\lambda}$, $t(0)=1$. Therefore, for
$\delta>0$ small enough we have $t^+(\tilde{w}-w)=
t(w)<1+\epsilon<t_*(\tilde{w}-w)$ for all $\|w\|<\delta$. Since $t_*(\tilde{w}-w)>1$,
we obtain $I_{\lambda}(\tilde{w})\leq {I_{\lambda}(t^+(\tilde{w}-w)(\tilde{w}-w))}\leq I_{\lambda}(\tilde{w}-w)$
for all $\|w\|<\delta$. This shows that $\tilde{w}$ is a local minimizer for
$I_{\lambda}$ in $\mathcal W^1_{0, L}(\mathcal C)$.\\
\end{proof}
\subsection{Existence of a second solution}\label{3.1}
\noindent Throughout this section, we fix $\lambda \in (0, \Lambda_0)$ and
  $w=(\tilde w_1,\tilde w_2)$ as the local minimum of $I_\lambda$
obtained in the previous section. Using min-max methods and Mountain pass lemma around a closed set, we prove
the existence of a second solution $(\tilde{z}_1 , \tilde{z}_2 )$
of $(P_E)_\lambda$ such that $\tilde{z}_1 \ge \tilde{w}_1 $ and
$\tilde{z}_2 \ge \tilde{w}_2 $ in $\mathcal C$.
\begin{defi}  \rm
Let $\mathcal{F} \subset\mathcal{W}^1_{0,L}(\mathcal C) $ be a closed set. We say that a sequence
$\{w_k\} \subset\mathcal{W}^1_{0,L}(\mathcal C) $ is a Palais-Smale sequence for $I_\lambda$ at
level $\rho$ around $\mathcal{F}$, and we denote $(PS)_{\mathcal{F},\rho}$,
if
\begin{gather*}
\lim_{k \to +\infty} dist \Big(w_k, \mathcal{F} \Big) = 0, \quad
\lim_{k \to +\infty} I_\lambda(w_k) = \rho, \quad
\lim_{k\to+\infty} \|I'_\lambda(w_k) \| = 0.
\end{gather*}
\end{defi}
\noindent We have the following version of compactness Lemma based on the Vitali's convergence theorem\cite{adi,fmr}:
\begin{lem}\label{strcri}
For any $(PS)_{F,\rho}$ sequence $\{w_k\}\subset \mathcal W^1_{0,L}(\mathcal C)$ of $I_\lambda$ for any closed set $\mathcal F$. Then there exists $w_0\in \mathcal W^{1}_{0,L}(\mathcal{C})$ such that, up to a subsequence,  $ g_1(w_k(x, 0)) \rightarrow   g_1(w_0(x, 0)) $ , \;$ g_2(w_k(x, 0)) \rightarrow   g_2(w_0(x, 0)) $ in $L^{1}(-1,1)$ and  $ G(w_k(x, 0))  \rightarrow  G(w_0(x, 0))$ in $L^1(-1,1)$.
\end{lem}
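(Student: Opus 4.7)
The plan is to combine weak compactness of the bounded sequence with Vitali's convergence theorem, extracting the needed equi-integrability from the Palais--Smale information via the Moser--Trudinger inequality (Theorem~\ref{mtsys}).

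First I would establish that any $(PS)_{\mathcal F,\rho}$ sequence $\{w_k\}$ is bounded in $\mathcal W^{1}_{0,L}(\mathcal C)$. Writing $I_\lambda(w_k)-\tfrac{1}{\alpha+\beta}\langle I'_\lambda(w_k),w_k\rangle$, exactly as in Lemma~\ref{cbb}, and using $(\alpha+\beta)G(s,t)\le g_1(s,t)s+g_2(s,t)t$ together with H\"older's inequality on the sign-changing sublinear term, one gets $\|w_k\|^2\le C(1+\|w_k\|)$, hence boundedness. Passing to a subsequence, $w_k\rightharpoonup w_0$ weakly in $\mathcal W^{1}_{0,L}(\mathcal C)$, and the compact embedding of the trace in $H^{1/2}(-1,1)\hookrightarrow L^r(-1,1)$ gives strong convergence $w_k(\cdot,0)\to w_0(\cdot,0)$ in every $L^r$, together with pointwise a.e.\ convergence. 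In particular $g_j(w_k(x,0))\to g_j(w_0(x,0))$ and $G(w_k(x,0))\to G(w_0(x,0))$ a.e. in $(-1,1)$.

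The heart of the proof is then equi-integrability. Testing $I'_\lambda(w_k)=o(1)$ against $w_k$ itself and using boundedness of the sign-changing term by H\"older's inequality yields the uniform bound
\[
\int_{-1}^{1}\!\bigl(g_1(w_k(x,0))w_{k_1}(x,0)+g_2(w_k(x,0))w_{k_2}(x,0)\bigr)\,dx\le C.
\]
Since $g_1(s,t)s+g_2(s,t)t$ behaves like $(s^{2}+t^{2})|s|^{\alpha}|t|^{\beta}e^{s^{2}+t^{2}}$ for $(s,t)$ large, this controls $\int|w_{k_1}|^{\alpha}|w_{k_2}|^{\beta}(w_{k_1}^{2}+w_{k_2}^{2})e^{w_{k_1}^{2}+w_{k_2}^{2}}\,dx$ uniformly in $k$. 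On the level set $E_R:=\{x:w_{k_1}(x,0)^{2}+w_{k_2}(x,0)^{2}>R^{2}\}$ the factor $(s^{2}+t^{2})$ dominates the polynomial prefactors appearing in $g_j$ and in $G$, so $\int_{E_R}(|g_j(w_k)|+|G(w_k)|)\,dx\le C/R^{2}$, which can be made arbitrarily small independently of $k$. On the complement $\{w_{k_1}^{2}+w_{k_2}^{2}\le R^{2}\}$ the integrands are uniformly bounded, so uniform absolute continuity reduces to absolute continuity of the Lebesgue integral. This delivers equi-integrability of $\{g_j(w_k(\cdot,0))\}$ and $\{G(w_k(\cdot,0))\}$ on $(-1,1)$, and Vitali's convergence theorem then yields the claimed $L^{1}$ convergences.

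The main obstacle is the middle-range estimate just above: because $g_1,g_2,G$ mix the polynomial prefactors $(\alpha+2u^{2})|u|^{\alpha-2}u|v|^{\beta}$ and $(\beta+2v^{2})|u|^{\alpha}|v|^{\beta-2}v$ with cross-terms, one must carefully verify that $|g_j(s,t)|+|G(s,t)|\le C(s^{2}+t^{2})^{-1}\bigl(g_1(s,t)s+g_2(s,t)t\bigr)$ for $s^{2}+t^{2}\ge R^{2}$, keeping track of how $|u|^{\alpha-1}|v|^{\beta}$ and $|u|^{\alpha}|v|^{\beta-1}$ combine with $(u^{2}+v^{2})$. If this direct comparison is delicate in a particular regime, a backup is to invoke Theorem~\ref{mtsys}: for $\alpha_{0}\in (0,\pi]$ with $\alpha_{0}\|w_k\|^{2}\le \pi$, one has $\sup_k\int e^{\alpha_{0}(w_{k_1}^{2}+w_{k_2}^{2})}dx<\infty$, and H\"older's inequality against the $L^{r}$-bound on the polynomial factors gives the missing piece of equi-integrability; this requires $\Lambda_{0}$ small enough that the PS level forces $\|w_k\|^{2}<\pi/\alpha_{0}$.
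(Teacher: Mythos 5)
The paper itself gives no proof of Lemma \ref{strcri} (it is stated with a pointer to Vitali's theorem and to \cite{adi,fmr}), so the only thing to assess is whether your argument actually closes. Your first two steps (boundedness of the $(PS)_{\mathcal F,\rho}$ sequence via $I_\lambda(w_k)-\tfrac{1}{\alpha+\beta}\langle I'_\lambda(w_k),w_k\rangle$, weak convergence, a.e.\ convergence of traces, and the uniform bound $\int_{-1}^1(g_1(w_k)w_{k_1}+g_2(w_k)w_{k_2})\,dx\le C$) are fine, and for $G$ your tail estimate is correct, since $g_1s+g_2t=(\alpha+\beta+2s^2+2t^2)G(s,t)$ gives $G\le (2R^2)^{-1}(g_1s+g_2t)$ on $\{s^2+t^2\ge R^2\}$.

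The gap is exactly at the point you flag, and it is not merely ``delicate'': the comparison $|g_1(s,t)|\le C(s^2+t^2)^{-1}(g_1s+g_2t)$ on $\{s^2+t^2\ge R^2\}$ is false. Since $|g_1(s,t)|=(\alpha+2s^2)|s|^{\alpha-1}|t|^\beta e^{s^2+t^2}$ while $g_1s+g_2t=(\alpha+\beta+2s^2+2t^2)|s|^{\alpha}|t|^\beta e^{s^2+t^2}$, the ratio behaves like $c/|s|$ as $s\to0$ with $|t|$ of order $R$, so no constant $C$ works uniformly on the set $\{s^2+t^2\ge R^2\}$. This is precisely where the system case departs from the scalar lemma of \cite{fmr}: there, $|f(u)|\le M^{-1}|f(u)u|$ on $\{|u|\ge M\}$ and $f(u)$ is bounded on $\{|u|\le M\}$; here $g_1(s,t)$ restricted to $\{|s|\le M\}$ is still unbounded in $t$, and the $L^1$ bound on $g_1w_{k_1}+g_2w_{k_2}$ carries the weight $|w_{k_1}|^{\alpha}$, which degenerates exactly in that regime. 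Your backup via Theorem \ref{mtsys} does not rescue this in general either: it requires $\sup_k\|w_k\|^2<\pi$, which you cannot extract from the $(PS)_{\mathcal F,\rho}$ condition at an arbitrary level $\rho$ --- in particular not at the mountain-pass level $\rho_0$ of Subsection \ref{3.1}, where the paper has to invoke the Lions-type improvement (Lemma \ref{lm-lions}) precisely because the plain Moser--Trudinger bound is insufficient there. To make the equi-integrability of $g_1(w_k)$ and $g_2(w_k)$ rigorous you need an additional ingredient controlling, say, $\int_{\{|w_{k_1}|\le \delta\}}|w_{k_2}|^{\beta}e^{w_{k_2}^2}\,dx$ uniformly in $k$ (a level restriction, a concentration-compactness alternative, or a quantitative use of the structure of the specific sequences to which the lemma is applied); as written, the proposal does not supply it.
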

Let $T = \{ (z_1,z_2): z_1\ge \tilde{w}_1, z_2\ge \tilde{w}_2 \text{ a.e. in } \mathcal C \}$.
We note that $ \lim_{t\rightarrow+\infty} I(\tilde{w}_1 + t z_1, \tilde{w}_2 + t z_2) = -
\infty$ for any $(z_1, z_2) \in\mathcal{W}\setminus\{0\} $. Hence, we may fix
$(\overline{w}_1, \overline{w}_2) \in\mathcal{W}\setminus\{0\} $ such that
$I_\lambda(\tilde{w}_1+ \overline{w}_1, \tilde{w}_2 + \overline{w}_2 ) < 0 $.
We define the mountain pass level
\begin{equation}
\label{eq:13} \rho_0= \inf_{\gamma\in\Upsilon} \sup_{t \in[0,1] }
I_\lambda(\gamma(t)),
\end{equation}
where $ \Upsilon = \{ \gamma: [0,1] \to\mathcal{W}^1_{0,L}(\mathcal C) : {\gamma([0,1])\subset C}, \; \gamma(0)
= (\overline{w}_1, \overline{w}_2),
 { \gamma(1) = (\tilde{w}_1 + \overline{w}_1, \tilde{w}_2 + \overline{w}_2 ) } \}$.
  It follows that $\rho_0 \ge I_\lambda(\tilde{w}_1, \tilde{w}_2)$.
 If $\rho_0 = I_\lambda(\tilde{w}_1, \tilde{w}_2) $, we
obtain that $\inf\{I_\lambda(z_1,z_2): \|(z_1,z_2)-(\tilde{w}_1,\tilde{w}_2)\| = R \}
= I_\lambda(\tilde{w}_1, \tilde{w}_2)$
for all $R\in(0, R_0)$ for some $R_0$ small. We now let $\mathcal{F} = T $ if $\rho_0 > I_\lambda(\tilde{w}_1, \tilde{w}_2)$, and
$ \mathcal{F}= T \cap \{ \|(z_1,z_2)-(\tilde{w}_1,\tilde{w}_2)\|
= \frac{R_0}{2} \} $ if
$\rho_0 = I_\lambda(\tilde{w}_1, \tilde{w}_2) $.\\
\noindent Now we need the following version of the "sequence of Moser functions concentrated on the boundary" as defined in \cite{adyadav, jpsree} for the scalar case.
\begin{lem}\label{mser}
There exists a sequence $\{\phi_k\}\subset H^{1}_{0,L}(\mathcal{C})$ satisfying
\begin{enumerate}
\item $\phi_k\geq 0$, $\textrm{supp}(\phi_k)\subset B(0, 1)\cap \mathbb R^2_+$,
\item $\|\phi_k\|=1$,
\item $\phi_k$ is constant on $x\in B(0, \frac{1}{k})\cap \mathbb R^2_+$,  and $\phi_k^2=\frac{1}{\pi}\log k+O(1)$ for $x\in B(0, \frac{1}{k})\cap\mathbb R^2_+$.
\end{enumerate}
\end{lem}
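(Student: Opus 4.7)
The plan is to adapt the classical two-dimensional Moser sequence to the half-cylinder setting, exploiting the fact that on $H^{1}_{0,L}(\mathcal{C})$ the relevant norm is simply the Dirichlet energy. A function that is radial around the corner $(0,0)$ and restricted to the upper half-plane is the natural candidate: it automatically concentrates near the boundary $\{y=0\}$ and its energy is easy to compute in polar coordinates.

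Explicitly, I would set, with $r=\sqrt{x^2+y^2}$,
\[
\phi_k(x,y)=\frac{1}{\sqrt{\pi}}\begin{cases} \sqrt{\log k} & \text{if } 0\le r\le 1/k,\\ \log(1/r)/\sqrt{\log k} & \text{if } 1/k\le r\le 1,\\ 0 & \text{if } r\ge 1. \end{cases}
\]
The three pieces glue continuously at $r=1/k$ and $r=1$, so $\phi_k$ is Lipschitz and belongs to $H^{1}(\mathcal{C})$. The support lies in $\overline{B(0,1)}\cap\overline{\mathbb{R}^2_+}$, and for every $y>0$ the points $(\pm 1,y)$ satisfy $|(\pm 1,y)|>1$, so $\phi_k$ vanishes on $\{\pm 1\}\times(0,\infty)$; hence $\phi_k\in H^{1}_{0,L}(\mathcal{C})$, which gives (1). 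Property (3) is immediate: on $B(0,1/k)\cap\mathbb{R}^2_+$ the function is constant and equals $\sqrt{(\log k)/\pi}$, so $\phi_k^2=(\log k)/\pi$.

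It remains to check (2). Passing to polar coordinates $(r,\theta)$ with $\theta\in(0,\pi)$, the gradient vanishes outside the annular strip $1/k<r<1$, while on that strip $|\nabla\phi_k|^2=1/(\pi r^2\log k)$, so
\[
\|\phi_k\|^2=\int_{0}^{\pi}\!\!\int_{1/k}^{1}\frac{1}{\pi r^2\log k}\,r\,dr\,d\theta=\frac{1}{\pi\log k}\cdot \pi\cdot\log k=1.
\]
There is no genuine obstacle: the whole argument is a direct transplant of the classical Moser construction in $\mathbb{R}^2$, cf.\ \cite{adyadav,jpsree}. The only mildly delicate point is ensuring the support avoids the lateral walls $\{\pm 1\}\times(0,\infty)$, which is built into the choice of cutoff radius $1$; the factor $1/\sqrt{\pi}$ (rather than the usual $1/\sqrt{2\pi}$ in the full-plane setting) is precisely what is needed to compensate for the restriction to the half-plane and produce unit norm.
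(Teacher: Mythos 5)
Your construction is correct and is essentially the paper's own proof: the authors take the standard planar Moser function $\psi_k$ normalized by $1/\sqrt{2\pi}$, restrict it to $\mathbb{R}^2_+$ (halving the Dirichlet energy to $1/2$), and then divide by the norm, which produces exactly your function with the explicit factor $1/\sqrt{\pi}$. Your direct polar-coordinate computation of the half-plane energy and the verification that the support avoids the lateral walls $\{\pm 1\}\times(0,\infty)$ match the paper's argument in substance.
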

\begin{proof}
Let \begin{equation*}
 \psi_{k}(x,y) = \frac{1}{\sqrt{2\pi}}\left\{
\begin{array}{lr}
\sqrt{\log k}& 0\leq \sqrt{x^2+y^2}\leq {\frac{1}{k}},\\
 \frac{\log {\frac{1}{\sqrt{x^2+y^2}}}}{\sqrt{\log k}} & \frac{1}{k}\leq \sqrt{x^2+y^2} \leq 1,\\
 0 & \sqrt{x^2+y^2}\geq 1.
\end{array}
\right.
\end{equation*}
Then $\displaystyle \int_{\mathbb R^2}|\nabla \psi_k|^2 dxdy=1$ and $\displaystyle \int_{\mathbb R^2}|\psi_k|^2 dxdy=O\left(\frac{1}{\log k}\right)$. Let $\displaystyle \tilde{\psi}_k=\psi_k|_{\mathbb R^2_+}$ and $\displaystyle \phi_k=\frac{\tilde \psi_k}{\|\tilde \psi_k\|}$. Then $\phi_k\geq 0$ and $\|\phi_k\|=1$. Also $\displaystyle \int_{\mathbb R^2_+}|\nabla \tilde \psi_k|^2 dx dy=\frac{1}{2}$ and $\displaystyle \int_{\mathbb R^2_+}|\tilde \psi_k|^2 dx dy=O\left(\frac{1}{\log k}\right)$. Therefore $\displaystyle\phi_k^2=\frac{1}{\pi}\log k+O(1)$.
\end{proof}

We have the following upper bound on $\rho_0$.
\begin{lemma}\label{lm-rho_0}
With $\rho_0$ defined as in \eqref{eq:13}, we have
$\rho_0 < I_\lambda(\tilde{w}_1, \tilde{w}_2) + \frac{\pi}{2}$.
\end{lemma}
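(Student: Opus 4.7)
The plan is to exhibit paths $\gamma_k \in \Upsilon$ whose maximum $I_\lambda$-value lies strictly below $I_\lambda(\tilde w_1, \tilde w_2) + \pi/2$. The threshold $\pi/2$ is exactly one-half of the critical Trudinger--Moser exponent $\pi$ appearing in Theorem \ref{mtsys}, and the construction uses Moser's concentrating sequence from Lemma \ref{mser}. Define $\Phi_k := (\phi_k, \phi_k)/\sqrt{2}\in\mathcal W^1_{0,L}(\mathcal C)$; then $\Phi_k \ge 0$, $\|\Phi_k\|=1$, and the traces satisfy $|\Phi_{k,1}(x,0)|^2+|\Phi_{k,2}(x,0)|^2 = \phi_k(x,0)^2 = \frac{\log k}{\pi}+O(1)$ on $\{|x|<1/k\}$. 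Because $G$ grows super-exponentially in its arguments, $I_\lambda(\tilde w+T\Phi_k)\to -\infty$ as $T\to\infty$; hence for $T_k>0$ sufficiently large one has $I_\lambda(\tilde w+T_k\Phi_k)<0$, and I take $\overline w := T_k\Phi_k$.

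I would build $\gamma_k \in \Upsilon$ by concatenating the segment $s\mapsto(1-s)T_k\Phi_k+s\tilde w$ (from $\overline w$ to $\tilde w$) with $s\mapsto\tilde w+sT_k\Phi_k$ (from $\tilde w$ to $\tilde w+\overline w$), up to reparametrization on $[0,1]$. On the first segment, since $\tilde w$ is a local minimizer and $T_k\Phi_k\rightharpoonup 0$ weakly in $\mathcal W^1_{0,L}(\mathcal C)$ (as Moser bubbles satisfy $\|\phi_k\|_{L^2}=O(1/\sqrt{\log k})$), the energy is majorized by $I_\lambda(\tilde w)+o_k(1)$. On the second segment, $\sup_{s\in[0,1]} I_\lambda(\tilde w+sT_k\Phi_k)=\sup_{t\in[0,T_k]}I_\lambda(\tilde w+t\Phi_k)=:M_k$. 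Thus the theorem reduces to the bound $M_k<I_\lambda(\tilde w)+\pi/2$ for $k$ large.

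Since $I_\lambda(\tilde w+t\Phi_k)\to -\infty$ as $t\to\infty$, $M_k$ is attained at some $t_k>0$. A Taylor expansion around $\tilde w$, exploiting $I'_\lambda(\tilde w)=0$, $\|\Phi_k\|^2=1$ and $\phi_k\rightharpoonup 0$, produces
\begin{equation*}
I_\lambda(\tilde w+t\Phi_k)\leq I_\lambda(\tilde w)+\frac{t^2}{2}-\int_{-1}^1 G(\tilde w+t\Phi_k)\,dx+o_k(1),
\end{equation*}
while the concentration of $\Phi_k$ on $\{|x|\leq 1/k\}$, coupled with the explicit form $G(u,v)=|u|^\alpha|v|^\beta e^{u^2+v^2}$, yields
\begin{equation*}
\int_{-1}^1 G(\tilde w+t\Phi_k)\,dx\;\geq\; C(\log k)^{(\alpha+\beta)/2}\,k^{\,t^2/\pi-1}.
\end{equation*}
If $\limsup_k t_k^2\geq\pi$, the right-hand side diverges, contradicting the uniform lower bound $M_k\geq I_\lambda(\tilde w)$ forced by taking $t=0$. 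Hence $t_k^2\leq\pi-\delta$ for some $\delta>0$, and $M_k\leq I_\lambda(\tilde w)+(\pi-\delta)/2+o_k(1)<I_\lambda(\tilde w)+\pi/2$ for $k$ large.

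The principal obstacle is making the $o_k(1)$ remainder in the Taylor expansion uniform in $t\in[0,T_k]$. It absorbs the subcritical cross contribution $\lambda\int_{-1}^1 f(x)\,(\tilde w_1+t\phi_k/\sqrt 2)^p(\tilde w_2+t\phi_k/\sqrt 2)^q\,dx$ as well as the linear term $t\int_{-1}^1[g_1(\tilde w)+g_2(\tilde w)]\phi_k/\sqrt 2\,dx$; these are controlled by H\"older's inequality, by the strong $L^r$ compactness of the polynomial term, and by a Vitali-type argument (in the spirit of Lemma \ref{strcri}) applied to the equi-integrable family $\{\tilde w+t\Phi_k:t\in[0,T_k]\}$, using that $\phi_k\to 0$ in every $L^r$, $r\geq 1$.
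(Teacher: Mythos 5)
Your overall strategy (Moser bubbles added to $\tilde w$, a quantitative lower bound for $\int G$ on the concentration ball, and a bound on the maximizing parameter $t_k$) is the right one, and it is essentially the skeleton of the paper's argument; but your central dichotomy has a hole exactly at the critical threshold. You argue: either $\limsup_k t_k^2\ge\pi$, in which case $\int_{-1}^1G(\tilde w+t_k\Phi_k)\,dx\ge C(\log k)^{(\alpha+\beta)/2}k^{t_k^2/\pi-1}$ diverges and contradicts $M_k\ge I_\lambda(\tilde w)$; or else $t_k^2\le\pi-\delta$ and you win. Neither branch covers the regime $t_k^2\uparrow\pi$ from below: there $k^{t_k^2/\pi-1}=k^{-\varepsilon_k}$ with $\varepsilon_k\to0^+$, and if for instance $\varepsilon_k=(\log k)^{-1/2}$ the right-hand side $(\log k)^{(\alpha+\beta)/2}e^{-\sqrt{\log k}}$ tends to $0$, so no contradiction arises, while the other branch only yields $M_k\le I_\lambda(\tilde w)+\pi/2+o_k(1)$, which is not the strict inequality required. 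This borderline case is the whole difficulty in Trudinger--Moser mountain-pass estimates, and it is where the paper does something your proposal does not: it assumes for contradiction that $\rho_0\ge I_\lambda(\tilde w)+\pi/2$, which (after expanding, using convexity of $G$, the equation satisfied by $\tilde w$, and a Taylor estimate of the $f$-term with error $(t_k^2+s_k^2)O(1/\log k)$) forces the \emph{quantified} lower bound $t_k^2+s_k^2\ge2\pi\bigl(1-O(1/\log k)\bigr)$; it then invokes the first-order conditions at the maximizer, whose right-hand side contains $\int_{|x|<1/k}e^{(t_k^2+s_k^2)\bar\psi_k^2}(t_k+s_k)\bar\psi_k\,dx\ \ge\ c\,(t_k+s_k)\sqrt{\log k}\;e^{\left(\frac{t_k^2+s_k^2}{2\pi}-1\right)\log k}$. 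In the borderline regime the exponential factor is merely bounded below, but the extra factor $\bar\psi_k\sim\sqrt{\log k}$ — which comes from testing the equation, not from the energy — still forces divergence against a bounded left-hand side. Your zeroth-order comparison $M_k\ge I_\lambda(\tilde w)$ cannot produce this $\sqrt{\log k}$ gain, so the gap is structural, not cosmetic.

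Two smaller points. Your displayed Taylor inequality $I_\lambda(\tilde w+t\Phi_k)\le I_\lambda(\tilde w)+t^2/2-\int G(\tilde w+t\Phi_k)+o_k(1)$ double-counts the potential: $I_\lambda(\tilde w)$ already contains $-\int G(\tilde w)$, and the correct statement retains the difference $\int G(\tilde w)-\int G(\tilde w+t\Phi_k)$ together with the linear term $t\int\bigl(g_1(\tilde w)\Phi_{k,1}+g_2(\tilde w)\Phi_{k,2}\bigr)$ coming from $I_\lambda'(\tilde w)=0$ (convexity of $G$ makes their sum nonpositive); as written your inequality already fails at $t=0$. Also, the first leg of your concatenated path, $s\mapsto(1-s)T_k\Phi_k+s\tilde w$, exits the cone $T$ over which $\Upsilon$ is defined, and its energy is not controlled by ``local minimality plus weak convergence'' since $T_k$ may grow with $k$; the admissible and sufficient competitor is simply $t\mapsto\tilde w+tT_k\Phi_k$, which is what the paper implicitly uses.
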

\begin{proof}
\noindent We prove this lemma by contradiction. Define $\overline{\psi}_k(x)=\frac{1}{\sqrt 2}\phi_k(x)$, where $\phi_k$ are defined in Lemma \ref{mser} then we have the following
\begin{enumerate}
\item $\|(\bar \psi_k,\bar \psi_k)\|^2=1$,
\item $\displaystyle\int_{\mathbb R^2_+}|\bar \psi_k|^2 dxdy=O(\frac{1}{\log k})$,
\item $ \bar \psi_k^2=\frac{1}{2\pi}\log k+O(1)$.
  \end{enumerate}
  Now, suppose $\rho_0 \ge I_\lambda(\tilde{w}_1, \tilde{w}_2)+  \frac{\pi}{2}$ and we
derive a contradiction. This means that for some $t_k,\; s_k >0$:
$$
I_\lambda(\tilde{w}_1+t_k \psi_k, \tilde{w}_2+s_k \psi_k)
= \sup_{t, s >0} I_\lambda( \tilde{w}_1+ t \psi_k, \tilde{w}_2 + s \psi_k)
\ge I_\lambda(\tilde{w}_1, \tilde{w}_1)+ \frac{ \pi}{2}, \quad \forall k.
$$
Since {$I_\lambda(\tilde{w}_1+t z_1, \tilde{w}_1+s z_2) \to -\infty$} as
$t,s \to +\infty$, we obtain that $(t_k, s_k)$ is bounded in $\mathbb{R}^2$.
Then, using {$\| (\bar\psi_k, \bar\psi_k)\| = 1$}, we obtain
{\begin{align*}
\frac{ t_k^2 + s_k^{2} }{4} &+ t_k\int_{-1}^1\nabla \tilde w_1\nabla \bar\psi_kdx+s_k\int_{-1}^1\nabla \tilde w_2\nabla\bar\psi_kdx \\
&\geq \frac{\lambda}{p+q}\left(\int_{-1}^1f(x)|\tilde w_1+t_k\bar\psi_k|^p|\tilde w_2+s_k\bar\psi_k|^q dx-\int_{-1}^1f(x)|\tilde w_1|^p|\tilde w_2|^q dx\right)\\
&+\int_{-1}^1G(\tilde w_1+t_k\bar\psi_k, \tilde w_2+s_k\bar\psi_k) dx-\int_{-1}^1 G(\tilde w_1,\tilde w_2) dx+\frac{\pi}{2}
\end{align*}}
{from which {together with the fact that $(\tilde w_1,\tilde w_2)$ is a solution for problem $(P_E)_\lambda$} we obtain
\begin{align*}
\frac{ t_k^2 + s_k^{2} }{4} &+\frac{\lambda p}{p+q} t_k\int_{-1}^1f(x)\vert\tilde w_1\vert^{p-2}\tilde w_1\bar\psi_k \vert \tilde w_2\vert^qdx
+\frac{\lambda q}{p+q} s_k\int_{-1}^1f(x)\vert\tilde w_1\vert^{p}\vert\tilde w_2\vert^{q-2}\tilde w_2\bar\psi_k dx\\
&\geq \frac{\lambda}{p+q}\left(\int_{-1}^1f(x)|\tilde w_1+t_k\bar\psi_k|^p|\tilde w_2+s_k\bar\psi_k|^q dx-\int_{-1}^1f(x)|\tilde w_1|^p|\tilde w_2|^q dx\right)\\
&+\int_{-1}^1G(\tilde w_1+t_k\bar\psi_k, \tilde w_2+s_k\bar\psi_k) dx-\int_{-1}^1 G(\tilde w_1,\tilde w_2) dx\\
&-t_k\int_{-1}^1g_1(\tilde w_1, \tilde w_2)\bar\psi_k dx-s_k\int_{-1}^1g_2(\tilde w_1, \tilde w_2)\bar\psi_k dx+\frac{\pi}{2}.
\end{align*}}
Now using that {$G$ is convex}, we get
{
\begin{align*}
\frac{ t_k^2 + s_k^{2} }{4} &+\frac{\lambda p}{p+q} t_k\int_{-1}^1f(x)\vert\tilde w_1\vert^{p-2}\tilde w_1\bar\psi_k \vert \tilde w_2\vert^qdx
+\frac{\lambda q}{p+q} s_k\int_{-1}^1f(x)\vert\tilde w_1\vert^{p}\vert\tilde w_2\vert^{q-2}\tilde w_2\bar\psi_k dx\\
&-\frac{\lambda}{p+q}\left(\int_{-1}^1f(x)|\tilde w_1+t_k\bar\psi_k|^p|\tilde w_2+s_k\bar\psi_k|^q dx-\int_{-1}^1f(x)|\tilde w_1|^p|\tilde w_2|^q dx\right)\geq \frac{\pi}{2}.
\end{align*}}
{ From the Taylor expansion, we have for some $\theta_1$, $\theta_2\in (0,1)$
\begin{align*}
&\frac{\lambda p}{p+q} t_k\int_{-1}^1f(x)\vert\tilde w_1\vert^{p-2}\tilde w_1\bar\psi_k \vert \tilde w_2\vert^qdx
+\frac{\lambda q}{p+q} s_k\int_{-1}^1f(x)\vert\tilde w_1\vert^{p}\vert\tilde w_2\vert^{q-2}\tilde w_2\bar\psi_k dx\\
&-\frac{\lambda}{p+q}\left(\int_{-1}^1f(x)|\tilde w_1+t_k\bar\psi_k|^p|\tilde w_2+s_k\bar\psi_k|^q dx-\int_{-1}^1f(x)|\tilde w_1|^p|\tilde w_2|^q dx\right)\\
&=\lambda\frac{p(p-1)}{p+q}t_k^2\int_1^1f(x)\vert\tilde w_1+\theta_1t_k\bar\psi\vert^{p-2}\vert \tilde w_2\vert^q\bar\psi_k^2 dx\\
&+\lambda\frac{q(q-1)}{p+q}s_k^2\int_1^1f(x)\vert\tilde w_1\vert^{p}\vert \tilde w_2\vert^q\vert \tilde w_2+\theta_2\bar\psi_k\vert^{q-2}\bar\psi_k^2 dx=(t_k^2+s_k^2)O\left(\frac{1}{\log(k)}\right).
\end{align*}}
{Hence we obtain that
\begin{equation*}
t_k^2+s_k^2\geq 2\pi(1-O\left(\frac{1}{\log(k)}\right)).
\end{equation*}}
\noindent Since $(t_k, s_k)$ is the critical point for $I_\lambda( \tilde{w}_1+ t \bar\psi_k, \tilde{w}_2 + s \bar\psi_k)$, we have
\begin{equation*}
{\frac{d}{dt}I_\lambda( \tilde{w}_1+ t \bar\psi_k, \tilde{w}_2 + s \bar\psi_k)|_{(t,s)=(t_k,s_k)}=0}, {\frac{d}{ds}I_\lambda( \tilde{w}_1+ t \bar\psi_k, \tilde{w}_2 + s \bar\psi_k)|_{(t,s)=(t_k,s_k)}=0}.
\end{equation*}
 Therefore
{\begin{align*}
\frac{ t_k^2 + s_k^2 }{2} &+ t_k\int_{-1}^1\nabla \tilde w_1\nabla\bar \psi_kdx+s_k\int_{-1}^1\nabla \tilde w_2\nabla \bar\psi_kdx \\
&= \frac{\lambda p }{p+q}\int_{-1}^1f(x)|\tilde w_1+t_k\bar\psi_k|^{p-2}(\tilde w_1+t_k\bar\psi_k)|\tilde w_2+s_k\bar\psi_k|^q t_k\bar\psi_kdx\\
&+\frac{\lambda q }{p+q}\int_{-1}^1f(x)|\tilde w_1+t_k\bar\psi_k|^{p}|\tilde w_2+s_k\bar\psi_k|^{q-2}(\tilde w_2+s_k\bar\psi_k) s_k\bar\psi_kdx\\
&+\int_{-1}^1g_1(\tilde w_1+t_k\bar\psi_k, \tilde w_2+s_k\bar\psi_k)t_k\bar\psi_k dx+\int_{-1}^1g_2(\tilde w_1+t_k\bar\psi_k, \tilde w_2+s_k\bar\psi_k)s_k\bar\psi_k dx.
\end{align*}}
{ Without loss of generality, we can assume that $f$ is positive in the neighborhood of $0$, then we get
\begin{align*}\nonumber
t_k^2+s_k^2+ t_k\int_{-1}^1\nabla \tilde w_1\nabla\bar \psi_kdx+s_k\int_{-1}^1\nabla \tilde w_2\nabla\bar \psi_kdx
&\geq C\int_{-\frac{1}{k}}^{\frac{1}{k}} e^{t_k^2\psi_k^2+s_k^2\bar \psi_k^2}(t_k+s_k)\bar \psi_kdx\\
&\geq \frac{C}{\sqrt \pi}e^{\left(\frac{t_k^2+s_k^2}{2\pi}-1\right)}(t_k+s_k)\sqrt {\log k}.\nonumber
\end{align*}}
This and {the fact that $t_k^2+s_k^2\geq 2\pi(1-O\left(\frac{1}{\log(k)}\right)) $ imply that
 $ t_k^2 + s_k^2 \to \infty $ as
$k \to \infty$, which is a contradiction}.
\end{proof}
\noindent We use the following version of {Lions'} higher integrability Lemma. The proof follows the same steps as of Lemma 3.4 in \cite{nhsr}.
\begin{lemma}\label{lm-lions}
 Let $\{w_k\}$ be a sequence in $\mathcal{W}^1_{0,L}(\mathcal C)$ such that
$\|w_k\|=1$, for all $k$ and
$w_{k_1} \rightharpoonup w_1$ in ${H}^1_{0,L}(\mathcal C)$ and $w_{k_2} \rightharpoonup w_2$ in ${H}^1_{0,L}(\mathcal C)$
for some $(w_1,w_2)\ne (0,0)$ in $\mathcal{W}^1_{0,L}(\mathcal C)$. Then, for
$ 0<p<  \pi (1-\|(w_1,w_2)\|^2)^{-1}$,
\[
\sup_{k\geq 1} \int_{-1}^1 e^{p (w_{k_1}^2 +w_{k_2}^2)} dx < \infty.
 \]
\end{lemma}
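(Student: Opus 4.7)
The plan is to adapt the classical Lions concentration-compactness argument, combining Young's inequality, Hölder's inequality and the Moser--Trudinger trace inequality of Theorem \ref{mtsys}. Set $v_k=(v_{k_1},v_{k_2}):=(w_{k_1}-w_1,w_{k_2}-w_2)$, so that $v_k\rightharpoonup 0$ in $\mathcal W^1_{0,L}(\mathcal C)$. From $\|w_k\|^2=\|v_k\|^2+\|(w_1,w_2)\|^2+2\langle v_k,(w_1,w_2)\rangle$ and weak convergence we get
\[
\|v_k\|^2=1-\|(w_1,w_2)\|^2+o(1).
\]
Since $p<\pi(1-\|(w_1,w_2)\|^2)^{-1}$ strictly, we may fix $\epsilon>0$ small and $r>1$ close to $1$ (with conjugate $r'$) such that
\[
p\,r\,(1+\epsilon)\bigl(1-\|(w_1,w_2)\|^2\bigr)<\pi,
\]
and this inequality will drive the argument.

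Next I would use Young's inequality componentwise, $w_{k_i}^2=(v_{k_i}+w_i)^2\le (1+\epsilon)v_{k_i}^2+(1+\tfrac{1}{\epsilon})w_i^2$, to obtain
\[
e^{p(w_{k_1}^2+w_{k_2}^2)}\le e^{p(1+\epsilon)(v_{k_1}^2+v_{k_2}^2)}\cdot e^{p(1+1/\epsilon)(w_1^2+w_2^2)}.
\]
Hölder's inequality with exponents $r,r'$ then splits the integral into a ``tail'' factor and a ``profile'' factor:
\[
\int_{-1}^1 e^{p(w_{k_1}^2+w_{k_2}^2)}dx\le
\Bigl(\int_{-1}^1 e^{pr(1+\epsilon)(v_{k_1}^2+v_{k_2}^2)}dx\Bigr)^{1/r}
\Bigl(\int_{-1}^1 e^{pr'(1+1/\epsilon)(w_1^2+w_2^2)}dx\Bigr)^{1/r'}.
\]
The second (profile) factor is finite and independent of $k$: $(w_1,w_2)$ is a fixed element of $\mathcal W^1_{0,L}(\mathcal C)$, and Theorem \ref{mtsys} applied to $(w_1,w_2)/\|(w_1,w_2)\|$ gives an exponential bound for any finite exponent.

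For the tail factor, I would normalize: writing $\tilde v_k:=v_k/\|v_k\|$ one has $\|\tilde v_k\|=1$, so that
\[
pr(1+\epsilon)(v_{k_1}^2+v_{k_2}^2)=\bigl(pr(1+\epsilon)\|v_k\|^2\bigr)(\tilde v_{k_1}^2+\tilde v_{k_2}^2).
\]
By the choice of $r,\epsilon$ and since $\|v_k\|^2\to 1-\|(w_1,w_2)\|^2$, the prefactor $pr(1+\epsilon)\|v_k\|^2$ is, for all $k$ large, bounded above by some $\alpha\in(0,\pi]$, and Theorem \ref{mtsys} yields a uniform bound on $\int_{-1}^1 e^{\alpha(\tilde v_{k_1}^2+\tilde v_{k_2}^2)}dx$. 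Putting the two factors together and absorbing finitely many initial $k$'s into the constant gives the claimed supremum bound.

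The main obstacle, and the only delicate step, is the balancing of the parameters $\epsilon,r$: one must exploit the strict inequality $p<\pi(1-\|(w_1,w_2)\|^2)^{-1}$ to leave enough room so that the prefactor $pr(1+\epsilon)\|v_k\|^2$ stays $\le\pi$ in the limit, which is precisely what permits the application of the subcritical version of Theorem \ref{mtsys}. Everything else is a bookkeeping argument using weak convergence of $v_k$ to $0$ and the trace Moser--Trudinger inequality.
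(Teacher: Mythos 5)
Your argument is correct and is essentially the standard Lions-type proof that the paper itself invokes only by reference (it gives no proof, deferring to Lemma 3.4 of the cited work of Megrez--Sreenadh--Khalidi, which proceeds by exactly your decomposition $w_k=v_k+w$, Young's inequality, H\"older's inequality, and the Moser--Trudinger trace inequality of Theorem \ref{mtsys} applied to the normalized remainder $v_k/\|v_k\|$). The only loose point is your justification of the finiteness of the profile factor $\int_{-1}^1 e^{M(w_1^2+w_2^2)}\,dx$ for arbitrary $M$: Theorem \ref{mtsys} applied to the normalized profile only reaches exponents up to $\pi$, so one should instead invoke the standard fact that $e^{Mu^2}$ is integrable for each \emph{fixed} $u$ and every $M$ (split $u$ into a smooth part with bounded trace plus a part of arbitrarily small norm and apply Young plus Theorem \ref{mtsys} to the small part) --- a routine repair that does not affect the validity of your proof.
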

%
\noindent Now, we prove our main result.
\begin{thm}
For $\lambda\in(0,\Lambda_0)$, problem $(P_E)_\lambda$ has a second
nontrivial solution $\hat w=(\hat{w}_1 , \hat{w}_2 )$ such that
$\hat{w}_1\ge \tilde{w}_1 > 0 $ and $\hat{w}_2 \ge \tilde{w}_2 > 0 $
in $\mathcal C$.
\end{thm}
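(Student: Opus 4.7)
The plan is to construct a second critical point of $I_\lambda$ at the mountain-pass level $\rho_0$ defined in \eqref{eq:13}, exploiting the closed set $\mathcal{F}$ to keep the min-max orbit in the cone $T=\{(z_1,z_2)\ge(\tilde w_1,\tilde w_2)\}$ and, if $\rho_0=I_\lambda(\tilde w)$, on a sphere around $\tilde w$. First, I would apply Ghoussoub's version of the Ekeland variational principle on a closed set (or, equivalently, a mountain-pass lemma around $\mathcal{F}$) to produce a sequence $\{w_k\}\subset\mathcal{W}^{1}_{0,L}(\mathcal C)$ which is $(PS)_{\mathcal{F},\rho_0}$, i.e.\ $\mathrm{dist}(w_k,\mathcal{F})\to0$, $I_\lambda(w_k)\to\rho_0$ and $I'_\lambda(w_k)\to0$. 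The combination of $I_\lambda(w_k)=\rho_0+o(1)$ with $\langle I'_\lambda(w_k),w_k\rangle=o(\|w_k\|)$ and the structural inequality $(\alpha+\beta)G(u,v)<g_1u+g_2v$ used in Lemma \ref{cbb} yields $\sup_k\|w_k\|<\infty$, so that up to a subsequence $w_k\rightharpoonup\hat w$ weakly in $\mathcal{W}^{1}_{0,L}(\mathcal C)$ with the trace converging strongly in every $L^r(-1,1)$.

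Second, I would pass to the limit to prove that $\hat w$ is a weak solution of $(P_E)_\lambda$. The concave term $\lambda f(x)|w_{k_1}(\cdot,0)|^p|w_{k_2}(\cdot,0)|^q$ and its variation are handled by H\"older's inequality and trace compactness. For the exponential terms I would invoke Lemma \ref{strcri}, whose hypotheses require $L^1$ convergence of $g_i(w_k(\cdot,0))w_{k_i}(\cdot,0)$. This reduces, via Vitali's theorem and the pointwise a.e.\ convergence, to obtaining a uniform $L^q$ bound on $e^{w_{k_1}^2+w_{k_2}^2}$ for some $q>1$, and here Lemma \ref{lm-lions} is the decisive tool: writing $v_k=w_k/\|w_k\|$ with $v_k\rightharpoonup v$, it suffices to show
\begin{equation*}
\|w_k\|^{2}\bigl(1-\|v\|^{2}\bigr)<\pi \qquad \text{for all $k$ large.}
\end{equation*}
Combining $I_\lambda(w_k)\to\rho_0$ and $\langle I'_\lambda(w_k),w_k\rangle\to0$ with the identity $\langle I'_\lambda(\hat w),\hat w\rangle=0$, and using the Brezis--Lieb type decomposition together with the inequality $I_\lambda(\hat w)\ge I_\lambda(\tilde w)$ (since $\hat w\in T$ is admissible as a competitor for the local minimum on $\mathcal N_\lambda^+$ up to a suitable $t^+$), one obtains $\|w_k\|^2-\|\hat w\|^2<\pi$ directly from the key estimate $\rho_0-I_\lambda(\tilde w)<\pi/2$ of Lemma \ref{lm-rho_0}. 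This is exactly the threshold that unlocks Lemma \ref{lm-lions} and hence strong convergence of the nonlinearities.

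Third, I would verify the qualitative properties of $\hat w$. Since $\mathrm{dist}(w_k,T)\to0$ and $T$ is weakly closed, $\hat w_i\ge\tilde w_i$ a.e.\ in $\mathcal C$; because $\tilde w_i>0$ by the first solution theorem, the strict positivity is inherited. The nontriviality and the fact $\hat w\ne\tilde w$ are handled by cases: if $\rho_0>I_\lambda(\tilde w)$, then $I_\lambda(\hat w)=\rho_0>I_\lambda(\tilde w)$ forces $\hat w\ne\tilde w$; if $\rho_0=I_\lambda(\tilde w)$, then by construction of $\mathcal{F}$ one has $\|\hat w-\tilde w\|=R_0/2>0$. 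The case $\hat w\equiv0$ is ruled out because $(0,0)\notin T$ (as $\tilde w_i>0$) and $\mathrm{dist}(w_k,\mathcal{F})\to0$.

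The main obstacle is the sharp energy estimate $\|w_k\|^2-\|\hat w\|^2<\pi$. Any loss of mass at infinity or miscounting of the energy contribution of $\hat w$ would break the Moser--Trudinger threshold and destroy compactness, so the Brezis--Lieb splitting has to be carried out carefully in the product space, and the optimality of Lemma \ref{lm-rho_0} (with the constant $\pi/2$ rather than $\pi$, reflecting the $\frac{1}{\sqrt 2}$ rescaling of the Moser sequence used there) must be tracked precisely. Everything else — constructing the PS sequence around $\mathcal{F}$, boundedness, and the sign/order properties of $\hat w$ — is by now standard once the compactness step is secured.
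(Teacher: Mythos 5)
Your overall architecture coincides with the paper's: a $(PS)_{\mathcal F,\rho_0}$ sequence from Ekeland's principle on the closed set $\mathcal F$, boundedness, the threshold $\rho_0<I_\lambda(\tilde w)+\tfrac{\pi}{2}$ from Lemma \ref{lm-rho_0} feeding Lions' Lemma \ref{lm-lions}, and a case split on whether $\rho_0=I_\lambda(\tilde w)$. The one genuine problem is the ordering of your second step. You make Lemma \ref{lm-lions} ``the decisive tool'' for proving that $\hat w$ is a weak solution, but the threshold inequality you want, $\|w_k\|^2(1-\|v\|^2)<\pi$, is computed from $\beta_0$, from $\langle I'_\lambda(\hat w),\hat w\rangle=0$ and from $I_\lambda(\hat w)\ge I_\lambda(\tilde w)$ --- all of which presuppose that $\hat w$ is already known to be a critical point lying on $\mathcal N_\lambda$. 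As written this is circular. The repair is exactly what the paper does: passing to the limit in the weak formulation against a fixed test function needs only $L^1$ convergence of $g_i(w_k(\cdot,0))$, which Lemma \ref{strcri} delivers from the a priori bounds $\int_{-1}^1 g_i(w_k)w_{k_i}\,dx\le C$ (coming from $\langle I'_\lambda(w_k),w_k\rangle=o(1)$ and boundedness) via Vitali's theorem, with no Moser--Trudinger threshold at all. The sharp threshold is only needed for the stronger statement $\int g_i(w_k)w_{k_i}\,dx\to\int g_i(\hat w)\hat w_i\,dx$, i.e.\ for norm convergence.

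Once that ordering is fixed, your route is a legitimate variant of, and in fact slightly stronger than, the paper's. The paper never proves $w_k\to\hat w$ strongly in general: it argues by contradiction, assumes $\hat w=\tilde w$, and only then applies Lemma \ref{lm-lions} with $\beta_0$ computed from the known solution $\tilde w$, so that $\tfrac12\|w_k\|^2\to\rho_0+\beta_0$ together with $\rho_0-I_\lambda(\tilde w)<\tfrac{\pi}{2}$ gives the threshold directly; strong convergence to $\tilde w$ then forces $I_\lambda(w_k)\to I_\lambda(\tilde w)$, contradicting $\rho_0\neq I_\lambda(\tilde w)$ in Case 2, while Case 1 is settled by Lemma \ref{strcri} and the sphere constraint alone. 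You instead aim for unconditional strong convergence via $I_\lambda(\hat w)\ge I_\lambda(\tilde w)$. That inequality is true, but your justification (competitor for the minimum on $\mathcal N_\lambda^+$ ``up to a suitable $t^+$'') only covers $\hat w\in\mathcal N_\lambda^+$; you must also handle $\hat w\in\mathcal N_\lambda^-$, where Lemma \ref{uhm} still yields $I_\lambda(t^-w)\ge I_\lambda(t^+w)\ge\theta_\lambda$ (and $I_\lambda\ge 0>\theta_\lambda$ on fibers with nonpositive weight integral, by Lemma \ref{negl}), plus the case $\hat w=0$, which you correctly exclude via $\hat w\in T$. Note finally that your identification $\|\hat w-\tilde w\|=R_0/2$ when $\rho_0=I_\lambda(\tilde w)$ itself requires the strong convergence you establish --- under weak convergence alone the norm could drop below $R_0/2$ --- so that dependence should be made explicit.
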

\begin{proof}
Let $ \{w_k\} $ be a Palais-Smale sequence for $I_\lambda$ at the level
$\rho_0$ around $\mathcal{F} $ obtained by applying Ekeland Variational principle on $\mathcal{F}$. Then it is easy to verify that the sequence $\{w_k\}$ is bounded in $\mathcal{W}^1_{0,L}(\mathcal C)$. So upto a subsequence
$w_{k_1}\rightharpoonup \hat w_1$ and $ w_{k_2}\rightharpoonup \hat w_2$ in $H^1_{0,L}(\mathcal C)$.
It can be shown that $(\hat w_1, \hat w_2)$ is
a solution of $(P_E)_\lambda$ . It remains to show that
$(\hat w_1, \hat w_2) \not\equiv (\tilde w_1, \tilde w_2)$. We prove it by contradiction. Suppose it is not so. Then we have only the two following cases:\\
\noindent\textbf{Case 1:} $\rho_0 = I_\lambda(\tilde w_1, \tilde w_2)$.\\
In this case, using Lemma \ref{strcri} we have
\begin{align*}
 I_\lambda(\tilde w_1, \tilde w_2)+ o(1) &= I_\lambda(w_{k_1},w_{k_2})\\
&= \frac{1}{2}\|w_k\|^2-\frac{\lambda}{p+q}\int_{-1}^1f(x)|w_{k_1}|^p|w_{k_2}|^q dx
- \int_{-1}^1 G (w_{k_1}, w_{k_2}) dx .
\end{align*}
Thus from Lemma \ref{strcri}, we
have  $\| (w_k -\tilde w)\|= o(1)$, which
contradicts the fact that $w_k \in \mathcal{F} $.
\noindent\textbf{Case 2:} $\rho_0 \neq I_\lambda(\tilde w_1, \tilde w_2) $.\\
\noindent  In this case $\rho_0 -I_\lambda(\tilde w_1, \tilde w_2) \in (0, \frac{\pi}{2})$ and $I_\lambda(w_{k_1}, w_{k_2}) \to
\rho_0$.\\
Let $\beta_0 = \frac{\lambda}{p+q}\int_{-1}^{1}f(x)|w_{k_1}(x,0)|^p|w_{k_2}(x,0)|^qdx+\int_{-1}^1 G(\tilde w_1, \tilde w_2) dx$. Then from Lemma \ref{strcri},
\begin{equation}
\label{eq:16}
\frac{1}{2}\|(w_{k_1}, w_{k_2})\|^2 \to (\rho_0+\beta_0) \quad
\text{as } k \to \infty.
\end{equation}
By Lemma \ref{lm-rho_0}, for $\epsilon>0$ small, we have
$$
(1+\epsilon) (\rho_0 -I_\lambda(\tilde w_1,\tilde w_2))) < \frac{\pi}{2}.
$$
Hence, from \eqref{eq:16} we have
\begin{align*}
(1+\epsilon) \|(w_{k_1}, w_{k_2})\|^{2}
&< \pi \frac{\rho_0+\beta_0}{\rho_0+\beta_0 -\frac{1}{2}\|(\tilde w_1,\tilde w_2)\|^{2}}\\
&< \pi \Big( 1-\frac{1}{2} ( \frac{\|(\tilde w_1,\tilde w_2)\|^{2}}{\rho_0+\beta_0})
 \Big)^{-1}\\
&< \pi \Big( 1 - \| \frac{w_{k_1}}{\sqrt{2(\rho_0+\beta_0)}} \|^2
- \| \frac{w_{k_2}}{\sqrt{2(\rho_0+\beta_0)}}\|^2 \Big)^{-1}.
\end{align*}
Now, choose  $p $ such that
$$
(1+\epsilon) \|(w_{k_1}, w_{k_2})\|^{2} \le p
< \pi ( 1 - \| \frac{w_{k_1}}{\sqrt{2(\rho_0+\beta_0)}} \|^2
- \| \frac{w_{k_2}}{\sqrt{2(\rho_0+\beta_0)}}\|^2)^{-1}.
$$
Since
$\frac{w_{k_1}}{\|(w_{k_1},w_{k_2})\|} \rightharpoonup
 \frac{\tilde w_1}{\sqrt{2 (\rho_0 +\beta_0)}}$
and
$\frac{w_{k_2}}{\|(w_{k_1}, w_{k_2})\|} \rightharpoonup
\frac{\tilde w_2}{\sqrt{2 (\rho_0 +\beta_0)}}$ weakly in $H^1_{0, L}(-1, 1)$,
by Lemma \ref{lm-lions}, we have
\begin{equation}\label{eq:new}
\sup_k \int_{\Omega} \exp\Big( p
\big[ \big( \frac{w_{k_1}}{\|(w_{k_1}, w_{k_2})\|} \big)^{2} + \big( \frac{w_{k_2}}{\|(w_{k_1}, w_{k_2}) \|} \big)^{2} \big] \Big) dx
 < \infty.
\end{equation}
Now, from \eqref{eq:new},
together with  H\"older inequality we have as $k\to\infty$
$$
\int_{\Omega} g_1(w_{k_1},w_{k_2}) w_{k_1} dx \rightarrow
\int_{\Omega} g_1 (\tilde w_1, \tilde w_2) \tilde w_1dx;\quad \int_{\Omega} g_2(w_{k_1},w_{k_2}) w_{k_2} dx \rightarrow
\int_{\Omega} g_2 (\tilde w_1, \tilde w_2) \tilde w_2dx .
$$
{Hence,
\begin{align*}
o(1)&= I_\lambda' (w_{k_1},w_{k_2})(w_{k_1},0)\\
&= \int_{-1}^1 |\nabla w_{k_1}|^2 dx-\lambda \frac{ p}{p+q}\int_{-1}^1f(x)\vert w_{k_1}\vert^p\vert w_{k_2}\vert^q dx
-\int_{-1}^1 g_1(w_{k_1},w_{k_2}) w_{k_1} dx\\
&=  I_\lambda' (\tilde w_{1},\tilde w_{2})(\tilde w_{1},0)+ o(1)
\end{align*}
 from which we deduce $w_{k_1}\to \tilde w_1$. Similarly, we obtain $w_{k_2}\to \tilde w_2$.}
This is a contradiction to the assumption that $\rho_0 \ne { I_\lambda(\tilde w_1, \tilde w_2) } $.
\end{proof}
\section{Superlinear problems}
\noindent In this section we consider existence of solution for the problem $(P)$
We use the idea of harmonic extension to solve the problem $(P)$. The extension problem corresponding to the problem $(P)$ can be considered as
$$ \quad (P_E)\; \left\{
\begin{array}{rrll}
 \quad  -\Delta w_1=-\Delta w_2 &= &0, \;w_1>0,w_2>0\; \; \text{in}\;\mathcal{C}=(-1,1)\times (0,\infty),\\
  w_1=w_2 &=&0 \; \text{on}\; \{-1,1\}\times (0,\infty),\\
  \frac{\partial w_1}{\partial \nu}&=&h_1(w_1,w_2) \; \text{on} \; (-1,1)\times \{0\},\\
  \frac{\partial w_2}{\partial \nu}&=&h_2(w_1,w_2) \; \text{on} \; (-1,1)\times \{0\}.\\
\end{array}
\right.
$$
where $\frac{\partial w_1}{\partial \nu}=\displaystyle\lim_{y\rightarrow 0^+}\frac{\partial w_1}{\partial y}(x, y)$ and $\frac{\partial w_2}{\partial \nu}=\displaystyle\lim_{y\rightarrow 0^+}\frac{\partial w_2}{\partial y}(x, y)$.
As discussed in the introduction, the problem in $(P)$ is equivalent to solving $(P_E)$ on $\mathcal W^{1}_{0,L}(\mathcal{C})$.
The variational functional, $I:\mathcal W^{1}_{0,L}(\mathcal{C})\rightarrow \mathbb R$ related to the problem $(P_E)$ is given as
\begin{equation*}
I(w_1,w_2)=\frac{1}{2}\int_{\mathcal{C}}|\nabla w_1|^2 dxdy+\frac{1}{2}\int_{\mathcal{C}}|\nabla w_2|^2 dxdy-\int_{-1}^{1} H(w_1(x,0),w_2(x,0))dx.
\end{equation*}
Any function $w\in \mathcal W^{1}_{0,L}(\mathcal{C})$ is called the weak solution of $(P_E)$ if for any $\phi=(\phi_1, \phi_2)\in \mathcal W^{1}_{0,L}(\mathcal{C})$
\begin{equation}\label{solp}
\int_{\mathcal{C}} \left(\nabla w_1.\nabla \phi_1 + \nabla w_2.\nabla \phi_2\right) dxdy=\int_{-1}^{1}\left( h_1(w(x,0))\phi_1(x,0)+ h_2(w(x,0))\phi_2(x,0)\right) dx
\end{equation}
It is clear that critical points of $I$ in $\mathcal W^{1}_{0,L}(\mathcal{C})$ corresponds to the critical points of $J$ in $ H^{\frac{1}{2}}_0(\mathbb {R})$.
Thus if $(w_1,w_2)$ solves $(P_E)$ then $(u,v)= \textrm{trace}(w_1,w_2)=(w_1(x, 0),w_2(x,0))$ is the solution of problem $(P)$ and vice versa.\\
%
%
\noindent We will use { the mountain pass lemma} to show the existence of a solution in the critical case.
\begin{lem}
Assume that the conditions $(h1)-(h5)$ hold. Then $I$ satisfies  the mountain pass geometry around $0$.
\end{lem}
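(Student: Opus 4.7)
The plan is to verify the three ingredients of the mountain pass geometry around $0$: (a) $I(0)=0$ (immediate from $(h1)$, which forces $H(0,0)=0$), (b) the existence of $\rho,\delta>0$ such that $I(w)\ge\delta$ on $\|w\|=\rho$, and (c) a point $e\in\mathcal W^1_{0,L}(\mathcal C)$ with $\|e\|>\rho$ and $I(e)<0$.

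For (b), I would combine $(h5)$ and $(h1)$ into a global pointwise bound on $H$. By $(h5)$ there exist $\epsilon>0$ and $\eta>0$ with $H(s,t)\le\tfrac{\lambda_1-\epsilon}{2}(s^2+t^2)$ on $\{|(s,t)|\le\eta\}$. On the complement, $(h1)$ yields, for any $\tau>0$ and any $q>2$, a constant $C_\tau$ with $H(s,t)\le C_\tau(|s|^q+|t|^q)e^{(1+\tau)(s^2+t^2)}$. Glueing,
\[
H(s,t)\le\tfrac{\lambda_1-\epsilon}{2}(s^2+t^2)+C_\tau(|s|^q+|t|^q)e^{(1+\tau)(s^2+t^2)},\qquad\forall(s,t)\in\mathbb{R}^2.
\]
Integrating on $(-1,1)$, the quadratic contribution is handled by the Poincaré-type inequality encoded in the definition of $\lambda_1$, namely $\int_{-1}^1(w_1^2+w_2^2)\,dx\le\tfrac{2}{\lambda_1}\|w\|^2$, so it is strictly dominated by $\tfrac12\|w\|^2$. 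For the remainder, Hölder's inequality bounds it by $\bigl(\int(|w_1|^{2q}+|w_2|^{2q})\,dx\bigr)^{1/2}\bigl(\int e^{2(1+\tau)(w_1^2+w_2^2)}\,dx\bigr)^{1/2}$. Choosing $\rho$ small enough that $2(1+\tau)\rho^2\le\pi$, Theorem \ref{mtsys} controls the second factor uniformly on $\|w\|=\rho$, while the Sobolev trace embedding gives $\bigl(\int|w_i|^{2q}\,dx\bigr)^{1/2}\le C\|w\|^q$. Putting everything together, $I(w)\ge c_1\rho^2-c_2\rho^q$ with $c_1,c_2>0$ and $q>2$, so for $\rho$ small enough $I(w)\ge\delta>0$ on $\|w\|=\rho$.

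For (c), I would exploit the Ambrosetti-Rabinowitz type condition $(h2)$. Fixing $s,t>0$ and setting $\phi(\sigma):=H(\sigma s,\sigma t)$, $(h2)$ gives $\sigma\phi'(\sigma)=\sigma s\,h_1(\sigma s,\sigma t)+\sigma t\,h_2(\sigma s,\sigma t)\ge\mu\phi(\sigma)$, which integrates to $H(\sigma s,\sigma t)\ge\sigma^\mu H(s,t)$ for all $\sigma\ge1$ whenever $H(s,t)>0$. Pick any $w_0=(w_{0,1},w_{0,2})\in\mathcal W^1_{0,L}(\mathcal C)$ whose traces are strictly positive on a set of positive measure (e.g.\ a suitable smooth bump), so that $\int_{-1}^1 H(w_0(x,0))\,dx>0$. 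Then
\[
I(\sigma w_0)\le\tfrac{\sigma^2}{2}\|w_0\|^2-\sigma^\mu\int_{-1}^1 H(w_{0,1}(x,0),w_{0,2}(x,0))\,dx\longrightarrow-\infty
\]
as $\sigma\to\infty$ since $\mu>2$. Taking $e=\sigma_0 w_0$ with $\sigma_0$ large furnishes the desired point.

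The main obstacle lies in step (b): one must track constants carefully so that the quadratic piece produces a strictly positive leading coefficient $c_1$, and simultaneously keep $\rho$ below the threshold at which Theorem \ref{mtsys} applies to the Hölder-doubled exponent $2(1+\tau)$. The strict inequality in $(h5)$ together with the freedom to pick $q>2$ and $\tau>0$ as small as desired is precisely what makes both requirements compatible.
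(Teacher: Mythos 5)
Your proposal is correct in structure and, for the key step (the strict positivity of $I$ on a small sphere), follows essentially the same path as the paper: split $H$ via $(h5)$ near the origin and $(h1)$ at infinity, absorb the quadratic piece using the variational characterization of $\lambda_1$, and control the exponential remainder by H\"older, the trace Moser--Trudinger inequality (Theorem \ref{mtsys}) and the Sobolev trace embedding, yielding $I(w)\ge c_1\rho^2-c_2\rho^{q}$ with $q>2$. Where you genuinely diverge is in showing $I(e)<0$: you integrate the Ambrosetti--Rabinowitz condition $(h2)$ along rays to get $H(\sigma s,\sigma t)\ge\sigma^{\mu}H(s,t)$ for $\sigma\ge1$, whereas the paper invokes $(h4)$ to extract a polynomial lower bound $H(s,t)\ge C_1|s|^{\mu_1}+C_2|t|^{\mu_2}-C_3$. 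Your route is the more standard one and arguably cleaner, since the paper's deduction of a lower bound on $H$ from $(h4)$ (which a priori controls $sh_1+th_2$, not $H$, from below) is itself most naturally justified by exactly the ray-integration of $(h2)$ that you carry out; the paper's version has the small advantage of not needing to verify that $H(w_0(x,0))>0$ on a set of positive measure, which you correctly arrange by choosing traces positive somewhere and using $h_2>0$ on the open first quadrant. One caveat you should resolve rather than merely flag: with the Poincar\'e constant $\tfrac{2}{\lambda_1}$ that you read off from the paper's definition of $\lambda_1$ in $(h5)$, the quadratic term is bounded by $\bigl(1-\tfrac{\epsilon}{\lambda_1}\bigr)\|w\|^2$, which does \emph{not} sit below $\tfrac12\|w\|^2$ for small $\epsilon$; the argument closes only with the constant $\tfrac{1}{\lambda_1}$, i.e.\ reading $\lambda_1$ as the genuine first eigenvalue (the paper's own final display $\tfrac12\bigl(1-\tfrac{\lambda_1-\epsilon}{\lambda_1}\bigr)\rho^2$ tacitly does this, so the factor $2$ in the stated definition of $\lambda_1$ should be treated as a normalization slip rather than used literally).
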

\begin{proof}
Using {assumption $(h_4)$} , we get
\begin{equation*}
H(s,t)\geq C_1 |s|^{\mu_1}+C_2|t|^{\mu_2}-C_3
\end{equation*}
for some $C_1, C_2, C_3>0$ and $\mu_1, \mu_2>2$. Hence for function $w\in \mathcal W^{1}_{0,L}(\mathcal{C})\cap C^\infty$ with support in $[-1,1]\times (0,1)$, we get
\begin{equation*}
I(tw)\leq \frac{1}{2}t^2\|w\|^2-C_1 t^{\mu_1} \int_{-1}^{1} |w_1(x, 0)|^{\mu_1} dx-C_2 t^{\mu_2} \int_{-1}^{1} |w_2(x, 0)|^{\mu_2} dx+C_3.
\end{equation*}
Hence $I(tw)\rightarrow -\infty$ as $t\rightarrow \infty$.
Next we will show that there exists $\alpha, \rho>0$ such that $I(w)>\alpha$ for all $\|w\|<\rho$. From $(h1)$ and $(h5)$,
for $\epsilon>0$ {$r>2$} there exists $C_1>0$ such that
{\begin{equation*}
|H(s,t)|\leq \frac{\lambda_1-\epsilon}{2}(s^2+t^2)+C_1(|s|^{r}+|t|^{r})e^{(1+\epsilon)(s^2+t^2)}.
\end{equation*}}
Hence, using the H\"{o}lder's inequality, {\eqref{eq12}} {and for $\rho$ small enough}, we get  for {$\ell>1$}
{\begin{equation*}
\int_{-1}^{1}|H(w(x,0))|dx\leq \frac{\lambda_1-\epsilon}{2}\|w(x,0)\|^2_{L^2(-1,1)}+C_1\|w_1(x,0)\|^{r}_{L^{r\ell}(-1,1)}+C_2\|w_2(x,0)\|^{r}_{L^{r\ell}(-1,1)}.
\end{equation*}}
Now using {Sobolev  embedding} and choosing $\|w\|= \rho$ for sufficiently small $\rho$, we get
\begin{equation*}
I(w) \geq \frac{1}{2}\left(1-\frac{\lambda_1-\epsilon}{\lambda_1} \right)\rho^2-C_3\rho^r.
\end{equation*}
Hence we get $\alpha>0$ such that $I(w)>\alpha$ for all $\|w\|=\rho$ for sufficiently small $\rho$.
\end{proof}
\noindent Next we show the boundedness of Palais-Smale sequences {($(PS)$ sequences for short)}.
\begin{lem}\label{psbo}
Every Palais-Smale sequence of $I$ is bounded in $H^{1}_{0,L}(\mathcal{C})$.
\end{lem}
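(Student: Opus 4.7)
The plan is the classical Ambrosetti--Rabinowitz boundedness argument, exploiting the super-quadratic condition (h2). Let $\{w_k\}=(w_{k_1},w_{k_2})\subset \mathcal{W}^1_{0,L}(\mathcal C)$ be a Palais--Smale sequence, so that $I(w_k)\to c$ and $\|I'(w_k)\|_{\ast}\to 0$. In particular, for $k$ large enough one has $I(w_k)\le c+1$ and $|\langle I'(w_k),w_k\rangle|\le \|w_k\|$.

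The key step is to evaluate the combination $I(w_k)-\frac{1}{\mu}\langle I'(w_k),w_k\rangle$. The quadratic terms collapse to $\bigl(\tfrac{1}{2}-\tfrac{1}{\mu}\bigr)\|w_k\|^2$, while the nonlinear contribution equals
\[
\int_{-1}^{1}\!\left[\tfrac{1}{\mu}\bigl(h_1(w_k(x,0))\,w_{k_1}(x,0)+h_2(w_k(x,0))\,w_{k_2}(x,0)\bigr)-H(w_k(x,0))\right]dx,
\]
which is nonnegative by (h2) on the region where both components are positive, and vanishes elsewhere since (h1) together with the model structure forces $h_1,h_2$ and $H$ to be zero when either component is nonpositive. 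Dropping this nonnegative term and using the Palais--Smale bounds gives, for all large $k$,
\[
\bigl(\tfrac{1}{2}-\tfrac{1}{\mu}\bigr)\|w_k\|^2 \;\le\; c+1+\tfrac{1}{\mu}\|w_k\|.
\]

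Since $\mu>2$, the coefficient $\tfrac{1}{2}-\tfrac{1}{\mu}$ is strictly positive, so this quadratic-in-$\|w_k\|$ inequality with merely linear right-hand side forces $\{\|w_k\|\}$ to stay bounded in $\mathcal{W}^1_{0,L}(\mathcal C)$, which is equivalent to boundedness in $H^1_{0,L}(\mathcal C)\times H^1_{0,L}(\mathcal C)$. I do not anticipate a genuine obstacle; the only subtle point is confirming that (h2), stated only for $s,t>0$, extends as an equality $0\ge 0$ on the complement, which is precisely the content of the support condition in (h1) and matches the model potential displayed after (h5).
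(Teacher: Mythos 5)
Your proposal is correct and follows essentially the same route as the paper: both evaluate $I(w_k)-\tfrac{1}{\mu}\langle I'(w_k),w_k\rangle$, discard the nonnegative term $\tfrac{1}{\mu}\int(h_1w_{k_1}+h_2w_{k_2}-\mu H)\,dx$ guaranteed by $(h2)$, and conclude from $\bigl(\tfrac12-\tfrac1\mu\bigr)\|w_k\|^2\le c+1+\tfrac1\mu\|w_k\|$ with $\mu>2$. Your remark about the behaviour of $h_1,h_2,H$ on the complement of $\{s>0,\,t>0\}$ is a sound clarification of a point the paper passes over silently.
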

\begin{proof}
Let $\{w_k\}=\{(w_{k_1},w_{k_2})\}$ be a $(PS)_c$ sequence, {that is}
\begin{align}
&\frac{1}{2}\|w_k\|^2-\int_{-1}^{1} H(w_k(x, 0))dx=c+o(1)\; \text{and} \label{plsm}\\
&\|w_k\|^2-\int_{-1}^{1} h_1(w_k(x, 0))w_{k_1}(x, 0)dx-\int_{-1}^{1} h_2(w_k(x, 0))w_{k_2}(x, 0)dx={o(\|w_k\|)}\label{plsm1}.
\end{align}
Therefore,
\begin{equation*}
\left(\frac{1}{2}-\frac{1}{\mu}\right)\|w_k\|^2-\frac{1}{\mu}\int_{-1}^{1} \left(\mu H(w_k)-h_1(w_k)w_{k_1}-h_2(w_k)w_{k_2}\right) dx=c+{o(\|w_k\|)}.
\end{equation*}
Using assumption $(h2)$, we get $\|w_k\|\leq C$ for some $C>0$.
\end{proof}
\noindent We have the following version of compactness Lemma:
\begin{lem}\label{lem2.4}
For any $(PS)_c$ sequence $\{w_k\}$ of $I$, there exists $\ddot{w} \in \mathcal W^{1}_{0,L}(\mathcal{C})$ such that, up to subsequence,  $ h_1(w_k(x, 0)) \rightarrow   h_1(\ddot{w}(x, 0)) $ in $L^{1}(-1,1)$, $ h_2(w_k(x, 0)) \rightarrow   h_2(\ddot{w}(x, 0)) $ in $L^{1}(-1,1)$ and  $ H(w_k(x, 0))  \rightarrow  H(\ddot{w}(x, 0))$ in $L^1(-1,1)$.
\end{lem}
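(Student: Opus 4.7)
My plan is to apply Vitali's convergence theorem separately to each of the three sequences $\{h_1(w_k(\cdot,0))\}$, $\{h_2(w_k(\cdot,0))\}$ and $\{H(w_k(\cdot,0))\}$. What is required in each case is pointwise a.e.\ convergence on $(-1,1)$ together with uniform equi-integrability with respect to Lebesgue measure.

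The pointwise convergence is the easy part. By Lemma \ref{psbo}, the $(PS)_c$ sequence $\{w_k\}$ is bounded in $\mathcal W^{1}_{0,L}(\mathcal C)$, so, passing to a subsequence, $w_k\rightharpoonup \ddot w = (\ddot w_1,\ddot w_2)$ weakly. The compactness of the trace embedding $\mathcal W^{1}_{0,L}(\mathcal C) \hookrightarrow L^r(-1,1)\times L^r(-1,1)$ for every $r\geq 1$ yields strong $L^r$ and a.e.\ convergence of $w_k(\cdot,0)$ to $\ddot w(\cdot,0)$, and then the continuity of $h_1,h_2$ and $H$ from $(h1)$ propagates this to the composed sequences.

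For equi-integrability I first extract two global $L^1$ bounds from the Palais--Smale relations \eqref{plsm}--\eqref{plsm1} combined with the Ambrosetti--Rabinowitz condition $(h2)$: namely $\int_{-1}^1 H(w_k)\,dx \leq C$ and $\int_{-1}^1\bigl(h_1(w_k) w_{k_1}(\cdot,0) + h_2(w_k) w_{k_2}(\cdot,0)\bigr) dx \leq C$, so that, as $h_1,h_2\geq 0$ on the positive quadrant, each individual integral $\int h_i(w_k) w_{k_i} dx$ is uniformly bounded. For a measurable $E\subset(-1,1)$ and $R>0$, I split
\begin{equation*}
\int_E h_1(w_k) dx = \int_{E\cap\{w_{k_1}\geq R\}} h_1(w_k) dx + \int_{E\cap\{0<w_{k_1}<R\}} h_1(w_k) dx.
\end{equation*}
The first piece is estimated by $\frac{1}{R}\int h_1(w_k) w_{k_1} dx \leq C/R$, arbitrarily small for $R$ large. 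On the second piece, $(h1)$ gives $h_1(w_k)\leq C_{\eta,R}\, e^{(1+\eta)(w_{k_1}^2+w_{k_2}^2)}$; H\"older's inequality combined with Theorem \ref{mtsys} and the concentration improvement of Lemma \ref{lm-lions} produces a uniform $L^p$ bound for some $p>1$, hence $\int_{E\cap\{w_{k_1}<R\}} h_1(w_k) dx \lesssim |E|^{1-1/p}\to 0$ as $|E|\to 0$. The argument for $h_2$ is symmetric. For $H$, the condition $(h3)$ dominates $H(w_k)$ by $M_1 h_1(w_k)+M_2 h_2(w_k)$ on $\{w_{k_1}>s_0,\,w_{k_2}>t_0\}$, while on the complement $H(w_k)$ is pointwise bounded and the estimate is trivial. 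Vitali's theorem then yields the three $L^1$ convergences.

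The main obstacle is the exponential estimate on $\{0<w_{k_1}<R\}$: even with $w_{k_1}$ controlled, $w_{k_2}$ may be large, and one cannot cross the critical Trudinger--Moser threshold $\pi$ using the naive bound $\|w_k\|^2\leq C$. This is precisely where Lemma \ref{lm-lions} becomes essential: it allows the admissible exponent $\pi(1-\|\ddot w\|^2)^{-1}$ against the normalised sequence, providing the small but decisive margin above $1$ needed to push through the H\"older argument. Once this step is justified, Vitali's theorem delivers the desired $L^1$ convergences.
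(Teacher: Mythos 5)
The paper gives no proof of this lemma: it is stated as a ``version of the compactness lemma'' and, like its twin Lemma \ref{strcri}, is implicitly deferred to the Vitali-type arguments of \cite{adi} and \cite{fmr}. Your overall strategy --- a.e.\ convergence of the traces from the compact embedding, the uniform bounds $\int H(w_k)\,dx\le C$ and $\int h_i(w_k)w_{k_i}\,dx\le C$ extracted from \eqref{plsm}--\eqref{plsm1} together with $(h2)$, a Chebyshev splitting, and Vitali's theorem --- is exactly that classical route, and those first steps are fine.

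The gap is in the equi-integrability on the region where only $w_{k_1}$ is controlled. On $E\cap\{0<w_{k_1}<R\}$ the quantity $h_1(w_{k_1},w_{k_2})$ is not bounded by a constant, since it still grows like $e^{(1+\eta)w_{k_2}^2}$ in the second variable, and your proposed remedy via Lemma \ref{lm-lions} does not apply: that lemma requires a normalized sequence whose weak limit is \emph{nonzero}, and even then only raises the admissible Moser--Trudinger exponent to $\pi\bigl(1-\|(w_1,w_2)\|^2\bigr)^{-1}$. The present lemma is asserted for \emph{every} $(PS)_c$ sequence, so one only knows $\|w_k\|^2\le C$ with $C$ possibly far above $\pi$, and the weak limit may be $0$ (this case is explicitly allowed, and treated separately, in the proof of Theorem \ref{mht0}); hence no uniform $L^p$ bound with $p>1$ on $e^{(1+\eta)(w_{k_1}^2+w_{k_2}^2)}$ follows, and the H\"older step collapses. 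The same cross-region reappears in your treatment of $H$: the complement of $\{w_{k_1}>s_0,\,w_{k_2}>t_0\}$ contains points with $w_{k_1}\le s_0$ and $w_{k_2}$ arbitrarily large, where $H$ is \emph{not} pointwise bounded (for the model nonlinearity $H=(s^4+t^4)e^{s^2+t^2}$ one has $H\sim t^4e^{t^2}$ there). The splitting that makes the Vitali argument honest is by the modulus of the pair, $\{w_{k_1}^2+w_{k_2}^2\ge R^2\}$ versus its complement: on the bounded region continuity genuinely yields a constant $C_R$, while on the unbounded region the bound $\frac{C}{R}$ controls $\int(h_1w_{k_1}+h_2w_{k_2})$ and, via $(h2)$--$(h3)$, $\int H$. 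Even then, controlling $\int h_1(w_k)$ alone on $\{w_{k_1}<R,\ w_{k_2}\ge R\}$ requires a pointwise domination of $h_1(s,t)$ by $sh_1+th_2$ in that strip, which holds for the model nonlinearity but is not a consequence of $(h1)$--$(h5)$; this is a genuine difficulty of the system case that your argument does not resolve and that the paper's appeal to the scalar references leaves unaddressed.
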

\noindent Define $\Gamma=\{\gamma\in C([0, 1]; \mathcal W^{1}_{0,L}(\mathcal{C})):\gamma(0)=0 \;\textrm{and}\;I(\gamma(1))<0\}$ and the corresponding mountain pass level as $c=\displaystyle \inf_{\gamma \in \Gamma } \displaystyle \max_{t\in[0, 1]}I(\gamma(t))$.
\begin{lem}\label{lem2.5}
$c<\frac{\pi}{2}$.
\end{lem}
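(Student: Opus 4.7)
The plan is to build an admissible path using the Moser-type sequence $\{\phi_k\}$ from Lemma \ref{mser} and exploit the superexponential growth encoded in $(h4)$ to bound the mountain pass level strictly below $\pi/2$.

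First, set $\bar\psi_k := \phi_k/\sqrt{2}$, so that $\|(\bar\psi_k,\bar\psi_k)\|^2 = \|\phi_k\|^2 = 1$ and, from the explicit formula for $\phi_k$, $\bar\psi_k^2(x,0) = \log k/(2\pi)$ for $|x|<1/k$. By $(h2)$ and $(h4)$, $H$ grows superexponentially along the diagonal, so $I(t\bar\psi_k, t\bar\psi_k)\to -\infty$ as $t \to +\infty$; hence for each $k$ one can choose $T_k$ with $I(T_k\bar\psi_k, T_k\bar\psi_k)<0$, and the path $s\mapsto sT_k(\bar\psi_k,\bar\psi_k)$, $s\in[0,1]$, belongs to $\Gamma$. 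Writing $f_k(t) := I(t\bar\psi_k, t\bar\psi_k) = \tfrac{t^2}{2} - \int_{-1}^{1} H(t\bar\psi_k, t\bar\psi_k)\,dx$, it then suffices to prove $\max_{t\geq 0} f_k(t) < \pi/2$ for some $k$.

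I will argue by contradiction: suppose $\max f_k\geq \pi/2$ for every $k$, and let $t_k>0$ be a maximizer. From $f_k(t_k)\geq \pi/2$ together with $H\geq 0$ one gets $t_k^2\geq \pi$, while $f_k'(t_k)=0$ (multiplied through by $t_k$) yields
\[
t_k^2 = \int_{-1}^{1} (t_k\bar\psi_k)\bigl(h_1(t_k\bar\psi_k, t_k\bar\psi_k) + h_2(t_k\bar\psi_k, t_k\bar\psi_k)\bigr)\, dx.
\]
The core step is to apply $(h4)$ along the diagonal: for any $M>0$ there exists $R(M)$ with $s(h_1(s,s)+h_2(s,s))\geq Me^{2s^2}$ whenever $s\geq R(M)$. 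Since $t_k\geq\sqrt{\pi}$ and $\bar\psi_k(x,0)$ is constant on $(-1/k,1/k)$, for $k$ large enough the integrand exceeds $Me^{2t_k^2\bar\psi_k^2}$ on this interval, and substituting $\bar\psi_k^2 = \log k/(2\pi)$ yields the key estimate
\[
t_k^2 \geq \frac{2M}{k}\, e^{2t_k^2\bar\psi_k^2} = 2M\, k^{\,t_k^2/\pi - 1}.
\]

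The main obstacle is to extract a contradiction from this inequality together with $t_k^2\geq\pi$, which is exactly where the critical threshold $\pi/2$ becomes sharp. The plan is a two-step argument: since $k^{t_k^2/\pi-1}\geq 1$ whenever $t_k^2\geq\pi$, the inequality forces $t_k^2\geq 2M$ for all $k$ sufficiently large (depending on $M$), and as $M$ is arbitrary, $t_k\to +\infty$. Second, taking $k\geq 2$ upgrades the inequality to $t_k^2 \geq M\cdot 2^{t_k^2/\pi}$, whose right-hand side grows exponentially in $t_k^2$ while the left-hand side grows only linearly; this is impossible once $t_k$ is large. The resulting contradiction shows that the initial assumption fails, hence $c<\pi/2$.
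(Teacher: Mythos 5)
Your proof is correct and follows essentially the same route as the paper: test with the concentrated Moser functions $\bar\psi_k=\phi_k/\sqrt{2}$, assume the maximum is $\geq \pi/2$, use $H\geq 0$ to bound the maximizer from below, and feed the first-order condition together with $(h4)$ on the concentration interval $(-\frac{1}{k},\frac{1}{k})$ into a blow-up estimate. Your two refinements --- restricting to the diagonal path $t(\bar\psi_k,\bar\psi_k)$ so that only one parameter is involved, and closing with the explicit two-step estimate $t_k^2\geq 2Mk^{t_k^2/\pi-1}$ rather than appealing to boundedness of the maximizers (which the paper asserts but does not establish uniformly in $k$) --- are sound and, if anything, tighten the argument.
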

\begin{proof}
We prove by contradiction. Suppose $c\geq {\pi}{2}$.  Then we have
\begin{equation}\label{contra}
\displaystyle \sup_{t,s\geq 0}I(s\bar \psi_k,t\bar\psi_k)=I(s_k\bar\psi_k, t_k\bar\psi_k)\geq \frac{\pi}{2}
\end{equation}
where functions $\phi_k$ are given by Lemma \ref{lm-rho_0}. From equation \eqref{contra} and since {$H\geq 0$} on $\mathbb R^2_+$, we get
\begin{equation*}
s_k^2+t_k^2\geq 2\pi.
\end{equation*}
 Now as $(s_k, t_k)$ is point of maximum we get $\frac{\partial}{\partial s}I(s\bar\psi_k, t\bar\psi_k)|_{s=s_k}=0$ and $\frac{\partial}{\partial t}I(s\bar\psi_k, t\bar\psi_k)|_{t=t_k}=0$. Therefore by $(h4)$,
{\begin{align*}
\frac{s_k^2}{2}+\frac{t_k^2}{2}&=s_k^2\|\bar\psi_k\|^2+t_k^2\|\bar\psi_k\|^2\\
&=\int_{-1}^{1} \left( h_1(s_k\bar\psi_k(x,0), t_k\bar\psi_k(x, 0))s_k +  h_2(s_k\bar\psi_k(x,0), t_k\bar\psi_k(x, 0))t_k \right)\bar\psi_k(x, 0)dx\\
&\geq 2\sqrt{\log(k)}(s_k h_1(\frac{s_k\sqrt{\log(k)}}{\sqrt{2\pi}},\frac{t_k\sqrt{\log(k)}}{\sqrt{2\pi}}) +t_k h_2(\frac{s_k\sqrt{\log(k)}}{\sqrt{2\pi}},\frac{t_k\sqrt{\log(k)}}{\sqrt{2\pi}})e^{-\log(k)}\\
&\to\infty \quad \text{as } k\to\infty
\end{align*}}
{which contradicts the boundedness of $(t_k)_{k\in\N}$ and $(s_k)_{k\in \N}$.}
\end{proof}
\noindent Next we prove Theorem \ref{mht0} using the above Lemmas.\\
\noindent \textbf{Proof of Theorem \ref{mht0}:} Using {the variational Ekeland principle} and Lemma \ref{psbo}, there exists a bounded $(PS)_c$ sequence. So there exists $\ddot {w}\in \mathcal W^{1}_{0,L}(\mathcal{C})$ such that, upto a subsequence, $w_k\rightharpoonup \ddot {w}$ in $\mathcal W^{1}_{0,L}(\mathcal{C})$ and $w_k(x, 0)\rightarrow \ddot {w}(x, 0)$ pointwise. We first prove that $\ddot w$ solves the problem, then we show that $\ddot {w}$ is non zero. From Lemma \ref{psbo} and equation \eqref{plsm} {together with $h_1$, $h_2\geq 0$}, we get for some constant $C>0$,
\begin{equation*}
\int_{-1}^{1} h_1(w_k(x, 0))w_{k_1}(x, 0)dx{\leq}C,\;\int_{-1}^{1} h_2(w_k(x, 0))w_{k_2}(x, 0)dx{\leq}C, \;\int_{-1}^{1} H(w_k(x, 0))dx{\leq}C.
\end{equation*}
Now from Lemma \ref{lem2.4}, we get $h_1(w_k(x,0))\rightarrow h_1(\ddot {w}(x,0))$ in $L^1(-1,1)$. So for $\psi\in C_c^\infty$ the equation \eqref{solp} holds. Hence from density of $C_c^\infty$ in $\mathcal W^{1}_{0,L}(\mathcal{C})$,  $\ddot w$ is weak solution of $(P_E)$.\\
\noindent Next we claim that $\ddot w\not\equiv 0$. Suppose not.
Then from Lemma \ref{lem2.4}, we get $H(w_k(x,0))\rightarrow 0$ in $L^1({\mathbb R})$. Hence from equations \eqref{plsm} and \eqref{plsm1},
we get $\frac{1}{2}\|w_k\|^2\rightarrow c$ as $k\rightarrow \infty$. {Hence, from Lemma \ref{lem2.5} $\sup_k\|w_k\|^2\leq \pi-\epsilon$} for some $\epsilon>0$. Let $0<\delta <\frac{\epsilon}{\pi}$ and {$q=\frac{\pi}{(1+\delta)(1+\epsilon)(\pi-\epsilon)}>1$}. Using
Moser- Trudinger inequality \eqref{mtsys}, we get
\begin{eqnarray*}
\int_{-1}^{1}| h_1( w_k)w_{k_1}|^q  dx &\leq& A\int_{-1}^1|w_{k_1}(x, 0)|^{2q}dx+C\int_{-1}^1{e^{q(1+\epsilon)(1+\delta)(w_{k_1}^{2}+w_{k_2}^2)}}dx\\
&\leq& C_1\|w_k\|^{2q}+ C\int_{-1}^{1} e^{q(1+\delta){(1+\epsilon)}\|w_k\|^2\frac{w_{k_1}^2+w_{k_2}^2}{\|w_k\|^2}} dx <\infty.
\end{eqnarray*}
Similarly $\int_{-1}^{1}| h_1( w_k)w_{k_1}|^q  dx <\infty.$ Therefore by
\begin{equation*}
\int_{-1}^1 h_1(w_k(x, 0)) w_{k_1}(x, 0)  dx \rightarrow 0, \;\int_{-1}^1 h_2(w_k(x, 0)) w_{k_2}(x, 0)  dx \rightarrow 0
\end{equation*}
  and from equation \eqref{plsm}, we get $\displaystyle \lim_{k}\|w_k\|^2=0$, which {contradicts $c\geq \alpha $}. Hence $\ddot w$ is a nontrivial solution of the problem $(P_E)$.\\
  
\noindent {\bf Acknowledgements:} the authors were funded by IFCAM (Indo-French Centre for Applied Mathematics) UMI CNRS under the project "Singular phenomena in reaction diffusion equations and in conservation laws".

%
%

 \end{document}